\newtheorem{theorem}{Theorem}[section]
\theoremstyle{plain}
\newtheorem{corollary}[theorem]{Corollary}
\newtheorem{corollary-definition}[theorem]{Corollary-Definition}
\newtheorem{definition}[theorem]{Definition}
\newtheorem{lemma}[theorem]{Lemma}
\newtheorem{proposition}[theorem]{Proposition}
\numberwithin{equation}{section}
\newcommand{\blank}{\hspace{0.04cm} \rule{2.4mm}{.4pt} \hspace{0.04cm} }
\DeclareMathOperator{\ot}{\overset{\rightharpoondown}{\otimes}}
\DeclareMathOperator{\otleft}{\overset{\leftharpoondown}{\otimes}}
\theoremstyle{definition}
\newtheorem{example}[theorem]{Example}
\newtheorem{remark}[theorem]{Remark}
\newcommand{\Ker}{\mathrm{Ker}\,}
\newcommand{\Coker}{\mathrm{Coker}\,}
\newcommand{\coker}{\mathrm{coker}\,}
\newcommand{\lrt}{\longrightarrow}
\newcommand{\Hom}{\mathrm{Hom}\,}
\DeclareMathOperator{\Ext}{\mathrm{Ext}}
\newcommand{\Tor}{\mathrm{Tor}}
\newcommand{\Imr}{\mathrm{Im}\,}
\newcommand{\Z}{\mathbb{Z}\,}
\newcommand{\Q}{\mathbb{Q}\,}
\newcommand{\Tr}{\mathrm{Tr}\,}
\newcommand{\lra}{\longrightarrow}
\newcommand{\Mod}{\mathrm{Mod}\,}
\newcommand{\ab}{\mathrm{Ab}}
\newcommand{\dia}[1]{\[\xymatrix{#1 }\]}
\newcounter{hours}
\newcounter{minutes}
\begin{document}
%\blendcolors{!60!yellow}
%
%\pagecolor{myyellow}
%
%\pagecolor{red!1!green!1!blue!0.76!}

\title[]{Injective stabilization of additive functors. I. Preliminaries.}

\author{Alex Martsinkovsky}
\address{Mathematics Department\\
Northeastern University\\
Boston, MA 02115, USA}
\email{alexmart@neu.edu}
%\urladdr{}
\author{Jeremy Russell}
\address{Department of Mathematics\\
Rowan University\\
201 Mullica Hill Rd, Glassboro, NJ 08028}
\email{russelljj@rowan.edu}
%\urladdr{}
%\thanks{}
\date{\today, \setcounter{hours}{\time/60} \setcounter{minutes}{\time-\value
{hours}*60} \thehours\,h\ \theminutes\,min}
\subjclass[2010]{Primary: 16E30;  }
\keywords{Additive functor, Tate homology, Tate cohomology, zeroth derived functor, injective stabilization, projective stabilization, injectively stable functor, effaceable functor, satellites, cosatellites, finitely presented functor, 1-torsion, Hom modulo injectives, Auslander-Reiten formula, small functor category, colimit extension, Ext, Pext, Tor, filtered colimits, pure injective, torsion theory, Eilenberg-Watts theorems.}
%\dedicatory{}

\begin{abstract}
This paper is the first one in a series of three dealing with the concept of injective stabilization of the tensor product and its applications. Its primary goal is to collect known facts and establish a basic operational calculus that will be used in the subsequent parts. This is done in greater generality than is necessary for the stated goal. Several results of independent interest are also established. They include, among other things, connections with satellites, an explicit construction of the stabilization of a finitely presented functor, various exactness properties of the injectively stable functors, a construction, from a functor and a short exact sequence, of a doubly-infinite exact sequence by splicing the injective stabilization of the functor and its derived functors. When specialized to the tensor product with a finitely presented module, the injective stabilization with coefficients in the ring is isomorphic to the 1-torsion functor. The Auslander-Reiten formula is extended to a more general formula, which holds for arbitrary (i.e., not necessarily finite) modules over arbitrary associative rings with identity. Weakening of the assumptions in the theorems of Eilenberg and Watts leads to characterizations of the requisite zeroth derived functors. 

The subsequent papers, provide applications of the developed techniques. Part~II deals with new notions of torsion module and cotorsion module of a module. This is done for arbitrary modules over arbitrary rings. Part~III introduces a new concept, called the asymptotic stabilization of the tensor product. The result is closely related to different variants of stable homology (these are generalizations of Tate homology to arbitrary rings). A comparison transformation from Vogel homology to the asymptotic stabilization of the tensor product is constructed and shown to be epic. 
 
\end{abstract}

\maketitle
\tableofcontents

\section{Introduction}

This is the first in a series of three papers dealing with the notion of injective stabilization of an additive functor. Of primary interest to us are the univariate tensor products, but eventually we will have to branch out to include other functors as well. Originally, this paper was meant to be a short section with a list of basic preliminaries in what is now the third paper. However, at some point the authors realized that the short section is no longer short and the idea to split it off into a separate paper started to emerge. This decision was eventually reinforced by two additional arguments. 
%The first one is related to the history of the subject, while the second one appeals to possible future developments. 

First, we remind the reader that the definition of the injective (and of the projective) stabilization of an additive functor was introduced by 
Auslander and Bridger in~\cite{AB}. Shortly before that, Auslander had developed the language of coherent functors in his fundamental work~\cite{A66}.
One can arguably claim that most of what one needs to know to profit from categories of coherent functors is already contained in those two sources. Yet, at the same time, the results proved or mentioned there are not always easy to extract when needed for practical purposes. A decision then has been made, with a general reader in mind, to write up the preliminaries, with special attention to detail, in a separate paper while keeping it as self-contained as reasonably possible. As a result, this paper is aimed at a reader who is familiar with the notion of additive functor, but might not have worked with functor categories. Several results from~\cite{A66} and~\cite{AB} have been included and streamlined, but a number of new results and examples have also been added. 

The other argument in favor of a separate paper is related to the direction of future research. For a long time, the first author has been calling for a study of stable categories. Most often this term refers to the category of modules modulo projectives. Its objects are modules, but the morphisms are quotients of the usual homomorphisms by the subgroup of homomorphisms factoring through projectives. This tool has numerous uses in diverse areas of representation theory, group cohomology, and topology (in fact, this concept originated the work of Eckmann and Hilton on duality in homotopy theory). But stable categories don't seem to have been studied for their own sake. An attempt at a phenomenological study of categories modulo projectives was recently undertaken in~\cite{MZ}. It then became clear that there were surprisingly tight and unexpected connections between the properties of the ring and the properties of its projectively stable category. Several years prior to that, the junior author of the present paper -- a graduate student at the time -- had made a simple but incisive comment that Hom modulo projectives is actually the projective stabilization of the covariant Hom functor. Thus the reader with a flexible attitude may say that properties of the ring are reflected, often in unexpected ways, in the properties of the projective stabilization of the covariant Hom functor. What has transpired during the work on this series of papers, is that the same can be said about the injective stabilization of the tensor product (and the projective stabilization of the contravariant Hom functor). Two unexpected applications of this philosophy -- the most general definitions of torsion and cotorsion modules of a module over an arbitrary ring -- will be given in the second paper of the series. The senior author is happy to admit that he was too timid in his call for study of stable categories. The new dictum should read ``study stable categories and stable functors''. It is to be hoped that this paper will be of help to those readers interested in functor categories who want to quickly start experimenting on their own.

We now give a brief outline of the contents of this paper. Motivation for the study of the injectively stabilized tensor product is provided in Section~\ref{motivation}.

Section~\ref{S:zeroth-derived} deals with what could be called ``homological algebra in degree 0''. It is the largest section of the paper and, as the name suggests, it deals with zeroth derived functors. Most of the material there is known in one form or another but is not easy to find in the literature. The formalism of the zeroth derived functors leads to one-line proofs of the theorems of Eilenberg and Watts.

Section~\ref{def-prop} contains the definition and basic properties of the injective stabilization of an arbitrary additive functor, which leads to injectively stable functors. In the terminology of Grothendieck, these are precisely effaceable functors. They form the torsion class of a hereditary torsion theory on the functor category. The torsion-free class consists of the mono-preserving functors. 

In Section~\ref{satellites}, we see a natural example of injectively stable 
functors, the right satellites. This is an important topic in its own right, primarily because the right and left satellites form an adjoint pair. They will reemerge in full strength in the third paper, but for now we record an important result: the injective stabilization of a half-exact functor is nothing but the counit of that adjunction. The new notion of cosatellite is also introduced there. 

The injective stabilization of a finitely presented (aka coherent) functor is investigated in Section~\ref{inj-fp}. The defect of such a functor appears there, which leads to a 4-term exact sequence of fundamental importance. The injectively stable finitely presented functors are precisely those with defect zero. 

Various exactness properties of the injective stabilization of a functor are described in Section~\ref{exact}. The injective stabilization of the tensor product and the harpoon notation for it make their first appearance there. 

Section~\ref{right-exact} specializes to right-exact functors. In that case, the injective stabilization of the functor admits yet another description: 
this is the first right satellite of the first left-derived functor of that functor. Given a short exact sequence of modules, the values of injective stabilization of a right-exact functor on the modules and their cosyzygy modules can be spliced together with the values of the left-derived functors on the modules. The resulting sequence is doubly-infinite and exact. 

The above results set up the stage for Section~\ref{inj-tensor-prod}, where we look at the injective stabilization of the tensor product in more detail. 
This is actually a bifunctor. As is the case with the tensor product itself, its  
injective  stabilization has an inert variable and an active variable (but no balance!). As was shown by Auslander, when the inert variable is finitely presented, the injective stabilization of the tensor product with the inert variable is isomorphic to the covariant functor $\Ext^{1}$ of the transpose of that inert variable. When evaluated on the ring, this yields the 1-torsion (=  Bass torsion) submodule of the inert variable. Finally, we establish a duality formula relating the injective stabilization of the tensor product and the projective stabilization of the contravariant Hom functor.  It is similar to the classical Auslander-Reiten formula, and in fact implies it. Unlike the Auslander-Reiten formula, it holds for arbitrary (i.e., not necessarily finitely presented) modules.

Section~\ref{S:small-functor-cat} exploits a remarkable property of additive functors defined on finitely presented modules -- they all preserve filtered colimits -- which allows to build an equivalence between the category of all functors on finitely presented modules and the category of filtered-colimit-preserving functors on the entire module category. This construction, called colimit extension, offers an alternative view on the functors on finitely presented modules.

\section{Motivation}\label{motivation}

The starting point for this series of papers was the desire to find a \texttt{homological} counterpart to Buchweitz's generalization of Tate \texttt{cohomology} to arbitrary rings~\cite{Bu}. A solution to that problem will be presented in the third paper of the series. The construction of Buchweitz is easy to describe:
\[
\mathrm{B}^{n}(M,N) := \varinjlim_{i}\, \underline{\Hom}(\Omega^{i+n}M, \Omega^{i}N),
\]
where $\underline{\Hom}$ stands for $\Hom$ modulo projectives, and $\Omega$ indicates the syzygy operation (which is an endofunctor on the category of modules modulo projectives, which makes the right-hand side well-defined).
To motivate our construction for a \texttt{homological} analog of $\mathrm{B}^{n}(M,N)$, we re-examine Buchweitz's definition from a different point of view. First, recall the notion of the \texttt{projective stabilization} of an additive functor. Let $F$ be an additive covariant functor from the category of modules over a ring to the category of abelian groups. Given a module $M$, let 
\[
P_1 \overset{\partial}{\longrightarrow} P_0 \overset{\pi}{\longrightarrow} M \longrightarrow 0
\]
be a projective presentation of~$M$. Applying $F$, one has $L_{0}F := \Coker F(\partial)$. The cokernel of the canonical natural transformation $L_{0}F \longrightarrow F$ is called the \texttt{projective stabilization} of $F$ and is denoted by 
$\underline{F}$. It is unique up to a canonical isomorphism and 
$\underline{F}(M) \cong \Coker F(\pi)$. In particular, if $F := (A, \blank)$ is the covariant Hom functor determined by the module $A$, then
\[
\underline{F}(M) \cong \Coker \big((A,P_0) \longrightarrow (A,M)\big)
\]
is precisely the component of the covariant Hom functor modulo projectives at $M$.

This point of view on Hom modulo projectives hints at a possible approach to constructing a homological counterpart of Buchweitz's version of stable cohomology: instead of computing the colimit of the projective stabilizations of the covariant Hom functors, one should compute the limit of the injective stabilizations of relevant tensor products. As in the cohomological setting, the choice of the resolutions is important: one of the variables will contribute a projective resolution, whereas the other -- an injective one.

\section{The zeroth derived functors: examples and applications}\label{S:zeroth-derived}

We begin by reviewing the basic definitions and properties of derived functors of additive functors. In fact, we focus our attention on the zeroth derived functors, a subject which is -- quite unfortunately -- not often discussed in the literature. As an application, we give one-line proofs of two theorems of Eilenberg and Watts. Let $F : \Mod(\Lambda) \to \ab$ be a covariant additive functor from the category of left $\Lambda$-modules to the category of abelian groups. Given a module $M$, let 
\[
\ldots \lra P_1 \overset{\partial}{\longrightarrow} P_0 \overset{p}{\longrightarrow} M \longrightarrow 0
\]
be a projective resolution of~$M$. Applying $F$, we have a complex 
\[
\xymatrix
	{
	\ldots \ar[r]
	& F(P_{2}) \ar[r]
	& F(P_{1}) \ar[r]
	& F(P_{0})
		}
\] 
whose homology in degree $i$ will be denoted by $L_{i}F(M)$. The following well-known results are elementary.

\begin{lemma}\label{L:Li}
For any additive functor $F$ and $i \in \Z$, 

 \begin{enumerate}
 \item The isomorphism class of $L_{i}F(M)$ is independent of the choice of the projective resolution of $M$.
 \smallskip
 
 \item Any homomorphism $f : M \to M^{\prime}$ of $\Lambda$-modules induces a homomorphism $L_{i}F(f) : L_{i}F(M) \to L_{i}F(M^{\prime})$ of abelian groups.
 \smallskip 
 
 \item $L_{i}F$ is an additive covariant functor $\Mod(\Lambda) \to \ab$, called the $i$th left-derived functor of $F$.
 \smallskip
 
 \item $L_{i}F$ vanishes on projectives for any $i \geq 1$. \qed
 \end{enumerate} 
\end{lemma}

Now we restrict our attention to the case $i = 0$. By the universal property of cokernels, we have a commutative diagram 
\[
\xymatrix
	{
	F(P_{1}) \ar[r]^{F(\partial)}
	& F(P_{0}) \ar[r] \ar[rd]^{F(p)}
	& L_{0}F(M) \ar[r] \ar[d]^{\lambda_{M}}
	& 0
\\
	&
	& F(M)
	}
\] 
in which the horizontal row is exact. The following results are well-known and easy to prove.

\begin{lemma}\label{L:L0}
For any additive functor $F$,
\begin{enumerate}

\item $L_{0}F$ is right-exact.
 \smallskip
 
 \item $\lambda : L_{0}F \lra F$ is a natural transformation.
 \smallskip
 
 \item $F$ is right-exact if and only if $\lambda$ is an isomorphism.
 \smallskip
 
  \item If $M$ is projective, then $\lambda_{M}$ is an isomorphism; in fact, 
 $\lambda_{M}$ can be chosen to be the identity. 
 \smallskip
 
 \item The natural transformation $L_{i} \lambda : L_{i}(L_{0}F) \lra L_{i}F$ is an isomorphism for all $i$.\footnote{Here $L_{i}\lambda$ denotes the natural transformation induced by $\lambda : L_{0}F \lra F$. Details are left to the reader.} In particular, $L_{0}\lambda : L_{0}(L_{0}F) \to L_{0}F$ is an isomorphism.
 \smallskip
  
 \item $\lambda : L_{0}F \lra F$ is universal with respect to natural transformations from right-exact functors to $F$, i.e., if $\alpha : G \lra F$ is a natural transformation with $G$ right-exact, then there is a unique 
 $\beta : G \lra L_{0}F$ making the diagram
\[
\xymatrix
	{
	& G \ar[d]^{\alpha} \ar@{.>}[ld]_{\exists !\, \beta} 
\\
	L_{0}F \ar[r]^{\lambda}
	& F
	}
\] 
commute. In other words, $\lambda$ induces an isomorphism 
$(G, L_{0}F) \overset{\simeq}\lra (G,F)$ of abelian groups.
\smallskip 
\item (Characterization of $L_{0}F$) If $\alpha : G \lra F$ is a natural transformation with $G$ right-exact and $\alpha$ evaluates to an isomorphism on projectives, then the unique transformation $\beta : G \lra L_{0}F$ from the diagram above is an isomorphism.  \qed
\end{enumerate}
\end{lemma}

Next we recall, for an additive covariant functor $F$, the construction of the natural transformation $\tau :F(\Lambda) \otimes \blank \lra F$. Here one uses the bimodule structure of $\Lambda$, which makes $F(\Lambda)$ a right $\Lambda$-module. Given a \texttt{left} $\Lambda$-module $B$ and $b \in B$, let $r_{b} : \Lambda \lra B$ be the map $l \mapsto lb$. Now define $\tau_{B} : F(\Lambda) \otimes B \lra F(B)$ by setting $\tau_{B} (x \otimes b) := F(r_{b})(x)$, where $x \in F(\Lambda)$.\footnote{Under the adjunction between the tensor product and the covariant Hom, $\tau_{b}$ corresponds to the map $B \to (F(\Lambda), F(B)) : b \mapsto F(r_{b})$.} By definition, when $B = \Lambda$, the term $F(r_{b})(x)$ is $xb$, the result of the  right action of $b \in \Lambda$ on $x \in F(\Lambda)$. Whence

\begin{lemma}\label{L:coun}
 $\tau_{\Lambda} : F(\Lambda) \otimes \Lambda \lra F(\Lambda)$ is the canonical isomorphism. \qed
\end{lemma}

The next result is an easy consequence of Lemma~\ref{L:coun}. 

\begin{lemma}[\cite{A66}, p. 227]\label{L:tau}
 If a covariant functor $F : \Mod(\Lambda) \to \ab$ commutes with coproducts, then $\tau : F(\Lambda) \otimes \blank \lra F$ is isomorphic to $\lambda : L_{0}F \lra F$. In particular,  
 \[
 L_{0}F \simeq F(\Lambda) \otimes \blank .
 \] 
\end{lemma}

\begin{proof}
It follows from the assumption that $F$ is additive.  The same assumption and Lemma~\ref{L:coun} imply that $\tau$ is an isomorphism on free modules and hence on projectives. The desired result now follows from Lemma~\ref{L:L0}, (7).
\end{proof}

As an immediate consequence of the preceding lemma, we have a theorem characterizing the tensor product. 

\begin{theorem}[Eilenberg, Watts]
 If a covariant functor $F$ commutes with coproducts and is right-exact, then
 \[
 \tau : F(\Lambda) \otimes \blank \lra F
 \]
 is an isomorphism.
\end{theorem}
\begin{proof}
 Immediately follows from Lemma~\ref{L:L0}, (3).
\end{proof}

\begin{example}
 Let $A$ be a $\Lambda$-module and $F : = (A, \blank)$. In this case we can describe $\tau: (A, \Lambda) \otimes \blank \lra (A, \blank)$ and its image.
 If $B$ is a  $\Lambda$-module, $f \in (A, \Lambda) =: A^{\ast}$, and $b \in B$, then $\tau_{B}$ is given by
 \[
 \tau_{B} :  A^{\ast} \otimes B \lra (A, B) : f \otimes b \mapsto r_{b} \circ f
 \]
 The image of $\tau_{B}$ consists of the maps $A \lra B$ factoring through finitely generated projectives. If either $A$ or $B$ is finitely generated, then the image of $\tau_{B}$ consists of \texttt{all} maps factoring through projectives, and therefore the cokernel of $\tau_{B}$ is isomorphic to $(\underline{A,B})$, the Hom modulo projectives. When $A$ is finitely presented, the latter is functorially isomorphic to $\Tor_{1}(\Tr A, B)$.
\end{example}

We want to examine the foregoing example in more detail. Recall the diagram
\[
\xymatrix
	{
	& A^{\ast} \otimes \blank \ar[d]^{\tau} \ar@{.>}[ld]_{\exists !\, \beta} 
\\
	L_{0}(A, \blank) \ar[r]^{\lambda}
	& (A, \blank)
	}
\] 
from Lemma~\ref{L:L0}, (6) with $F := (A, \blank)$ and $G := A^{\ast} \otimes \blank$. Straight from the definitions, one easily checks that for any module $B$, the image of $\lambda_{B}$ consists of all maps $A \lra B$ that factor through projectives. The image of $\tau_{B}$ consists of the maps $A \lra B$ that factor through finitely generated projectives. As we already saw, when $(A,\blank)$ commutes with coproducts, $\beta : A^{\ast} \otimes \blank \lra L_{0}(A, \blank)$ is an isomorphism. Hence, in that case, any map with domain $A$ which factors through a projective factors through a finitely generated projective. We shall now show that this condition characterizes the modules $A$ for which $\beta$ is an isomorphism.

\begin{theorem}\label{T:LHom}
 The natural transformation $\beta : A^{\ast} \otimes \blank \lra L_{0}(A, \blank)$ is an isomorphism if and only if any map with domain $A$ which factors through a projective factors through a finitely generated projective.
\end{theorem}

\begin{proof}
 The ``only if'' part has already been established. The prove the converse, it suffices, in view of Lemma~\ref{L:L0}, (7), to show that $\tau$ is an isomorphism on projectives or, equivalently, free modules. Let $N$ be a cardinal number and
 $P := \Lambda^{(N)}$. We need to show that $\tau_{P} : A^{\ast} \otimes P \lra 
 (A, P)$ is an isomorphism. Suppose $\tau_{P}(g) = 0$, where $g = \sum l_{i} \otimes h_{i}$. There are only finitely many components of $P$ containing the nonzero components of all of the $h_{i}$. Those components form a finitely generated direct summand $Q$ of $P$ and we have a commutative diagram 
 \[
\xymatrix
	{
	A^{\ast} \otimes Q \ar@{>->}[d] \ar[r]_{\tau_{Q}}^{\cong}
	& (A, Q) \ar@{>->}[d]
\\
	A^{\ast} \otimes P \ar[r]_{\tau_{P}}
	& (A,P)
	}
\] 
 where, by Lemma~\ref{L:coun}, $\tau_{Q}$ is an isomorphism. Also, the vertical maps are split monomorphisms. The element $\sum l_{i} \otimes h_{i}$ in 
 $A^{\ast} \otimes Q$ is mapped by the vertical arrow to $g$. Hence it is mapped to zero by the composition of $\tau_{Q}$ and the other vertical arrow, which shows that this element is zero. Thus $\tau_{P}$ is monic. To show that it is epic, pick a map $f \in (A,P)$. It trivially factors through the projective $P$ and, by the assumption, it factors through a finitely generated projective. Thus the image of $f$ is a submodule of a finitely generated direct summand $Q$ of $P$ and we get a commutative diagram as above. It follows that $f$ is in the image of $\tau_{P}$.
 \end{proof}
 
 The just proved theorem allows to describe the left-derived functors of a covariant Hom functor whose fixed argument has the property mentioned in the theorem.
 
\begin{corollary}
 Let $A$ be a $\Lambda$-module with the property that any map with 
 domain~$A$ which factors through a projective factors through a finitely generated projective (e.g., $A$ is finitely generated). Then 
\[
L_{i}(A, \blank) \simeq \Tor_{i}(A^{\ast}, \blank)
\]
for all $i$.
\end{corollary}

\begin{proof}
 Follows from Lemma~\ref{L:L0}, (5).
\end{proof}

\begin{remark}
 The zeroth left-derived functor of the covariant Hom functor with a \texttt{finitely presented} contravariant argument $A$ was determined in (\cite[Proposition 7.1]{A66}):
%\[
%L_{0}(A, \blank) \simeq A^{\ast} \otimes \blank
%\]
%(Here $A$ is a left $\Lambda$-module). 
%
%It now follows that
% \[
% L_{n} (C, \blank) \simeq \Tor_{n}(C^{\ast},\blank)
% \]
Thus the above corollary provides a more general result.
\end{remark}

Since the natural transformation $\beta : A^{\ast} \otimes \blank \lra L_{0}(A, \blank)$ is an isomorphism whenever $(A, \blank)$ preserves coproducts,  Theorem~\ref{T:LHom} imposes a necessary condition on $A$ for the functor  $(A, \blank)$ to preserve coproducts.

\begin{corollary}
 If $(A, \blank)$ preserves coproducts, then any map with domain $A$ which factors through a projective factors through a finitely generated projective. \qed
\end{corollary}

Similar arguments allow to describe the left-derived functors of the covariant 
$\Ext$ functor under a suitable finiteness condition on the fixed variable.

\begin{example}
 Let $A$ be a $\Lambda$-module and $F : = \Ext^{1}(A, \blank)$. The natural transformation $\tau: \Ext^{1}(A, \Lambda) \otimes \blank \lra \Ext^{1}(A, \blank)$ is easy to describe in the language of extensions. If $B$ is another $\Lambda$-module, $[x] \in \Ext^{1}(A, \Lambda)$ is the class represented by a short exact sequence $x$, and $b \in B$, then 
 \[
 \tau_{B} :  \Ext^{1}(A, \Lambda) \otimes B \lra \Ext^{1}(A, B)
 \]
 applied to $[x] \otimes b$ is just the class of the pushout of $x$ along $r_{b} : \Lambda \lra B$. Interpreting the values of $\Ext$ as homotopy classes of chain maps between shifted projective resolutions of the arguments, we have another description: $\tau_{B}([x] \otimes b) = [r_{b} \circ x]$. 
 
 Suppose now that $\Ext^{1}(A, \blank)$ commutes with coproducts. This happens, for example, if $A$ is of type $FP_{2}$, i.e., $A$ has a projective resolution whose terms in degrees from 0 to 2 are finitely generated.\footnote{See~\cite{Breaz-2013} for a general discussion of this question.} Arguing as before, we see that the natural transformation $\tau: \Ext^{1}(A, \Lambda) \otimes \blank \lra \Ext^{1}(A, \blank)$ is an isomorphism on projectives, which identifies $L_{0}\Ext^{1}(A, \blank)$ as 
 $\Ext^{1}(A, \Lambda) \otimes \blank$. As a consequence, under the foregoing assumption, for all $i$ we have
 \[
 L_{i}\Ext^{1}(A, \blank) \simeq \Tor_{i}(\Ext^{1}(A, \Lambda), \blank).
 \]
 \end{example}
 Similarly, if $A$ is of type $FP_{n+1}$, then for all $i$
 \[
 L_{i}\Ext^{n}(A, \blank) \simeq \Tor_{i}(\Ext^{n}(A, \Lambda), \blank).
 \]
 
Next we want to look at the \texttt{right-derived} functors of a covariant functor $F$. To describe $R^{0}F$, we assume that $F$ is finitely presented. Recall that functor is said to be \texttt{finitely presented} if it is isomorphic to the cokernel of a natural transformation between two representable functors (in particular, such a functor is necessarily additive). Thus a finitely presented covariant functor $F$ is determined by an exact sequence $(A, \blank) \lra (B, \blank) \lra F \lra 0$. By Yoneda's lemma, the transformation between the representable functors is of the form $f : B \to A$. The kernel of this map will be denoted by $w(F)$ and called the \texttt{defect} of $F$. Its isomorphism type is uniquely determined by $F$. To see this, we first observe that the category $fp(\Mod(\Lambda), \ab)$ of finitely presented covariant functors from $\Lambda$-modules to abelian groups is abelian~\cite[Proposition~3.2]{A82}, with the usual notions like kernel, cokernel, exactness, etc, defined componentwise. Yoneda's lemma shows that the representables are precisely the projectives in that category. In particular, the defining sequence for~$F$ is just a projective presentation. From basic homological algebra, we know that it is unique up to homotopy equivalence. By Yoneda's lemma, that equivalence passes to the maps $f : B \to A$. This can be viewed as a two-term complex, and~$w(F)$ is just a homology group of that complex. As such, it is homotopy-invariant. The same argument shows that $w: fp(\Mod(\Lambda), \ab) \lra \Mod(\Lambda)$ is a \texttt{contravariant} functor. The snake lemma shows that the defect $w$ is left-exact\footnote{The horseshoe lemma and the fact that the projective dimension of a finitely presented functor is at most 2 show that the defect $w(\blank)$ is actually an exact functor.} and, in particular, additive.  
 
 The universal property of cokernels yields a commutative diagram 
 \[
\xymatrix
	{
	(A, \blank) \ar[r] 
	& (B, \blank) \ar[r] \ar[rd]
	& F \ar[r] \ar[d]^{\rho}
	& 0
\\
	&
	& (w(F), \blank)
	}
\] 
where $\rho$ is clearly an isomorphism on injectives. Together with the fact that 
$(w(F), \blank)$ is left-exact, we have   

\begin{proposition}[\cite{A66}, top of page 210]\label{P:rho-defect}
 The unique comparison transformation  $R^{0}F \lra (w(F), \blank)$ is an isomorphism. \qed
\end{proposition}

\begin{corollary}
 The induced transformation $R^{i}F \lra \Ext^{i}(w(F), \blank)$ is an isomorphism for all $i$. \qed
\end{corollary}

\begin{example}
 Let $F := A \otimes \blank$, where $A$ is finitely presented with presentation 
 $Q \lra P \lra A \lra 0$. An easy calculation produces a finite presentation for $F$:
 \[
 (Q^{\ast}, \blank) \lra (P^{\ast}, \blank) \lra A \otimes \blank \lra 0
 \]
 which shows that $w(A \otimes \blank) \simeq A^{\ast}$, and therefore 
 \[
 R^{0}(A \otimes \blank) \simeq (A^{\ast}, \blank)
 \]
 Moreover, the transformation $\rho$ from the diagram above coincides with the canonical transformation $A \otimes \blank \lra (A^{\ast}, \blank)$.
\end{example}

\begin{corollary}
 If $A$ is finitely presented, then 
 \[
 R^{i}(A \otimes \blank) \simeq \Ext^{i}(A^{\ast}, \blank)
 \]
 for all~$i$. \qed
\end{corollary}

Now that the first examples of ``derived-on-the-wrong-side'' covariant functors have been examined, it is natural to look at contravariant functors. Before doing this, it is convenient to set the following notation. For covariant functors $F$, we adhere to the standard notation $L_{i}F$ when projective resolutions are used and $R^{i}F$ when injective resolutions are used. When dealing with derived functors of contravariant functors, the choice of the letter is flipped (this is still standard) but we also flip subscripts and superscripts. Thus, for a contravariant~$F$, the left-derived functors are denoted by $L^{i}F$ and the right-derived functors are denoted by $R_{i}F$. The table below, whose column captions indicate the choice of the resolution, encapsulates our conventions.

\begin{center}
 \begin{tabular}{|c|c|c|}
\hline  
	& Projective resolutions 
	& Injective resolutions 
\\
\hline 
	Covariant $F$ 
	& $L_{i}F$, \quad $L_{0}F \lra F \lra$  p-stab 
	& $R^{i}F$, \quad i-stab $ \lra F \lra R^{0}F$ 
\\
\hline 
	Contravariant $F$ 
	& $R_{i}F$, \quad  i-stab $\lra F \lra R_{0}F$ 
	& $L^{i}F, \quad L^{0}F \lra F \lra$ p-stab 
\\
\hline 
\end{tabular}
\end{center}
\bigskip
Thus, the subscripts are indicative of projective resolutions, whereas the superscripts are indicative of injective resolutions. The arrows are the natural transformations, and the abbreviations p-stab and i-stab stand for projective and, respectively, injective stabilization -- two concepts that will be the main objects of study in this series of papers. A potentially confusing aspect of the adapted nomenclature is that, in the contravariant case, the injective stabilization requires a projective resolution and the projective stabilization requires an injective resolution. For the reader who is sensing the presence of adjoints, we can offer  
a better mnemonic rule: projective stabilizations are defined as the cokernels of  the counits, whereas injective stabilizations are defined as the kernels of the units. Detailed explanations will be provided in the subsequent sections and papers.

With the notation set, in the remainder of this section, we switch our attention to the right-derived functors of a contravariant functor. Thus we are going to apply contravariant functors to \texttt{projective resolutions}, and the resulting homology will be decorated by the symbols $R_{i}$.

First, we briefly indicate how Lemmas~\ref{L:Li} and~\ref{L:L0} should be modified to fit our new choice. As we just said, the $L_{i}$ should be replaced by the $R_{i}$. In Lemma~\ref{L:Li},~(2), the second arrow should be reversed. In part (3) of the same lemma, the word ``covariant'' is replaced with ``contravariant'' and ``left-derived'' should now read ``right-derived''. Part (4) remains unchanged. The arrows in the diagram after Lemma~\ref{L:Li} should be reversed and the component $\lambda_{M}$ will now be called 
$\rho_{M}$.

Lemma~\ref{L:L0}, (1) should now read ``$R_{0}F$ is left-exact''. The arrow in (2) should be reversed. The new transformation is denoted by $\rho$. In (3), ``right-exact'' should read ``left-exact''. In (4), $\lambda_{M}$ becomes $\rho_{M}$. The arrows in (5) should be reversed. For the convenience of the reader, we are going to restate parts (6) and (7) in full detail.

\begin{itemize}
 \item[(6)] $\rho : F \lra R_{0}F$ is universal with respect to natural transformations from $F$ to left-exact functors, i.e., if $\alpha : F \lra G$ is a natural transformation with $G$ left-exact, then there is a unique 
 $\beta : R_{0}F \lra G$ making the diagram
 \[
\xymatrix
	{
	F \ar[d]_{\alpha} \ar[r]^{\rho} 
	& R_{0}F \ar@{..>}[ld]^{\exists \,! \beta}
\\
	G
	}
\] 
 commute. In other words, $\rho$ induces an isomorphism $(R_{0}F, G) 
 \overset{\simeq} \lra (F,G)$ of abelian groups.
 \smallskip
 
 \item[(7)] (Characterization of $R_{0}F$) If $\alpha : F \lra G$ is a natural transformation with $G$ left-exact and $\alpha$ evaluates to an isomorphism on projectives, then the unique transformation $\beta : R_{0} F \lra G$ from the diagram above is an isomorphism. 
 \end{itemize}

%the right-derived functors of covariant functors. Actually, this is the main subject of this paper and we shall start doing this in the next section. 

Let $F : \Mod(\Lambda) \to \ab$ be an additive contravariant functor from the category of left $\Lambda$-modules to the category of abelian groups. We recall the construction of the natural transformation $\sigma : F \lra (\blank , F(\Lambda))$. Here, once again, one uses the bimodule structure of $\Lambda$, which makes $F(\Lambda)$ a left $\Lambda$-module. Given a left $\Lambda$-module $M$ and $a \in M$, let $r_{a} : \Lambda \lra M$ be the map $l \mapsto la$. Now define $\sigma_{M} : F(M) \lra (M, F(\Lambda))$ by setting 
$[\sigma_{M}(x)](a) := F(r_{a})(x)$, where $x \in F(M)$.\footnote{Under the self-adjunction of the contravariant Hom functor, $\sigma_{M}$ corresponds to the map $M \to (F(M), F(\Lambda)) : a \mapsto F(r_{a})$.} By definition, when $M = \Lambda$, the term $F(r_{a})(x)$ is $ax$, the result of the left action of $a \in \Lambda$ on $x \in F(\Lambda)$. Whence

\begin{lemma}\label{L:un}
$\sigma_{\Lambda} : F(\Lambda) \lra (\Lambda, F(\Lambda))$ is the canonical isomorphism. \qed
\end{lemma}

As an easy consequence, we have
\begin{proposition}\label{P:R0}
 If a contravariant functor $F$ converts coproducts into products, then $\sigma : F \lra (\blank , F(\Lambda))$ is isomorphic to $\rho : F \lra R_{0}F$. In particular, 
\[
R_{0}F \simeq (\blank, F(\Lambda)).
\]
\end{proposition}

\begin{proof}
 It follows from the assumption that $F$ is additive. The same assumption and Lemma~\ref{L:un} imply that $\sigma$ is an isomorphism on free modules and hence on projectives. The desired result now follows from~(7) above. 
\end{proof}

As an immediate consequence of the preceding lemma, we have a theorem characterizing the contravariant Hom functor.

\begin{theorem}[Eilenberg, Watts]
 If a contravariant functor $F$ converts coproducts into products and is left-exact, then
 \[
 \sigma : F \lra (\blank, F(\Lambda))
 \]
 is an isomorphism.
\end{theorem}

\begin{proof}
 Immediately follows from the fact that $\rho$ is an isomorphism when $F$ is left-exact.
 \end{proof}

 Defect can also be defined for finitely presented contravariant functors; in that case we shall use the notation $v(F)$. The properties of $v$ are analogous to those of~$w$.  Thus if $(\blank, A) \overset{(\blank, f)}\lra (\blank, B) \lra F \lra 0$ is a finite presentation, then we have a defining exact sequence 
 \[
 A \overset{f}\lra B \lra v(F) \lra 0
 \]
 The defect $v(\blank)$ is a covariant functor from finitely presented contravariant functors to modules. The snake lemma shows that $v$ is right-exact\footnote{The horseshoe lemma and the fact that the projective dimension of a finitely presented functor is at most 2 show that the defect $v(\blank)$ is actually an exact functor.} and, in particular, additive.
 
 The universal property of cokernels yields a commutative diagram 
 \[
\xymatrix
	{
	(\blank, A) \ar[r] 
	& (\blank, B) \ar[r] \ar[rd]
	& F \ar[r] \ar[d]^{\rho}
	& 0
\\
	&
	& (\blank, v(F))
	}
\] 
The natural transformation $\rho : F \lra (\blank, v(F))$ is obviously an isomorphism on projectives. Whence

\begin{proposition}
Let $F$ be a finitely presented contravariant functor. The unique comparison transformation $R_{0}F \lra (\blank, v(F))$ is an isomorphism. \qed
\end{proposition}

%It is clear that a finitely presented contravariant functor converts coproducts into products. Whence

The above diagram implies

\begin{corollary}
The canonical map $\rho_{\Lambda}$  is an isomorphism 
$F(\Lambda) \simeq v(F)$. \qed
\end{corollary}

%\begin{proof}
% Follows from Proposition~\ref{P:R0} and Yoneda's lemma.
%\end{proof}

\begin{corollary}
The induced transformation $R_{i}F \lra \Ext^{i}(\blank, v(F))$ is an isomorphism for all $i$. \qed 
\end{corollary}

\section{Injective stabilization: definition and basic properties}\label{def-prop}

We begin by reviewing the notion of the injective stabilization of an additive covariant functor from the category of (say, left) modules over a ring 
$\Lambda$ to the category of abelian groups (or, more generally, between two abelian categories, with the domain category having enough injectives). For an arbitrary module $B$, choose an injective copresentation

\begin{equation}\label{inj-copres}
 0 \longrightarrow B \overset{\iota}\longrightarrow I^0 \overset{\partial}{\longrightarrow} I^1
\end{equation}
and apply an additive functor $F$. Then the image of $F(\iota)$ is in $ \Ker F(\partial) = R^0F(B)$, and one obtains a natural transformation $\rho_{F} : F \longrightarrow R^0F$.

%\begin{remark}
%It is well-known that $R^{0}F$ is left-exact. Therefore, if $\rho_{F}$ is an isomorphism, then $F$ is left-exact. Conversely, if $F$ is left-exact, then, as is also well-known, $\rho_{F}$ is an isomorphism. Thus 
%$\rho_{F} : F \longrightarrow R^{0}F$ is an isomorphism if and only if $F$ is left-exact. 
%\end{remark}

The kernel of $\rho_{F}$ is called the \texttt{injective stabilization} of $F$ and is denoted by $\overline{F}$. Thus we have an exact sequence of functors and natural transformations
\begin{equation}\label{rho}
 0 \longrightarrow \overline{F} \longrightarrow F \overset{\rho_{F}}\longrightarrow R^{0}F
\end{equation}
It is immediate that $\overline{F}$, being a subfunctor of an additive functor, is itself additive.

Let 
\begin{equation}\label{inj-copres-prime}
 0 \longrightarrow B \overset{\iota'}\longrightarrow I^{0'} \overset{\partial'}\longrightarrow I^{1'}
\end{equation}
be another injective copresentation of $B$. Extending the identity map on $B$ to injective resolutions extending the chosen presentations, we have a homotopy equivalence~$\varepsilon$ between the two. Since $F$ is additive, $F(\varepsilon)$ is also a homotopy equivalence, and therefore the induced map $R^{0}F(B) \longrightarrow R^{0'}F(B)$ on the homology is an isomorphism. We now have a commutative diagram 
 \[
\xymatrix
	{
	0 \ar[r]
	& \overline{F}(B) \ar[r] \ar@{.>}[d]
	& F(B) \ar[r] \ar@{=}[d]
	& R^{0}F(B) \ar[d]^{\cong}
\\
	0 \ar[r]
	& \overline{F}'(B) \ar[r]
	& F(B) \ar[r]
	& R^{0'}F(B)
	}
\] 
where the dotted map is an isomorphism. Since the horizontal arrows into $F(B)$ are the inclusions of the underlying sets, the dotted isomorphism is in fact an equality. In other words, we have uniqueness in the strongest possible sense. In summary:

\begin{lemma}\label{L:canonical}
 Any two choices for $\overline{F}(B)$ are equal. This equality is induced by any extension of the identity map on $B$ to the two injective resolutions of $B$. \qed
\end{lemma}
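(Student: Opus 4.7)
The plan is to establish uniqueness of $\overline{F}(B)$ inside $F(B)$ by producing, from any extension of $\mathrm{id}_{B}$, an isomorphism between the two candidate right-hand targets $R^{0}F(B)$ and $R^{0'}F(B)$, and then reading off the equality of the two kernels from the commutative diagram displayed just before the statement.

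First, I would use injectivity of $I^{0'}$ and $I^{1'}$ to lift $\mathrm{id}_{B}$ to a chain map $\varepsilon$ from the copresentation \eqref{inj-copres} to the copresentation \eqref{inj-copres-prime} (completed, if desired, to injective resolutions), and symmetrically to produce a chain map $\varepsilon'$ in the opposite direction. The standard comparison argument for injective resolutions shows that any two such lifts are chain-homotopic, and that $\varepsilon'\varepsilon$ and $\varepsilon\varepsilon'$ are chain-homotopic to the respective identities. Because $F$ is additive, $F(\varepsilon)$ is still a chain map and $F$ carries chain homotopies to chain homotopies, so $F(\varepsilon)$ induces a well-defined map $R^{0}F(B) \longrightarrow R^{0'}F(B)$ with inverse induced by $F(\varepsilon')$. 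This yields the right-hand vertical isomorphism of the comparison diagram, and its independence from the choice of lift follows from the same homotopy argument.

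Next, I would observe that in degree $0$ the relation $\varepsilon^{0}\iota = \iota'$ makes the right-hand square of the diagram commute when the middle vertical arrow is the identity on $F(B)$. A one-line diagram chase then shows that an element $x \in F(B)$ maps to zero in $R^{0}F(B)$ if and only if it maps to zero in $R^{0'}F(B)$, since the latter image is obtained from the former by an isomorphism. Hence $\overline{F}(B)$ and $\overline{F}'(B)$ coincide as subsets of $F(B)$.

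The only real point to be careful about --- which I think barely deserves the name ``obstacle'' --- is the insistence that the two candidates be \emph{equal} rather than merely isomorphic. This is automatic here: both are kernels of arrows out of the same object $F(B)$, and the middle column of the comparison diagram is $\mathrm{id}_{F(B)}$, so the induced ``dotted'' map is a bijection of subsets of $F(B)$ compatible with both inclusions into $F(B)$; such a map must be the identity on the common subset. In particular, the resulting equality does not depend on the choice of extension $\varepsilon$.
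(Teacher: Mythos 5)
Your proposal is correct and follows essentially the same route as the paper: extend $\mathrm{id}_{B}$ to a homotopy equivalence of injective resolutions, use additivity of $F$ to get an isomorphism $R^{0}F(B)\to R^{0'}F(B)$ compatible with the two maps out of $F(B)$, and conclude that the two kernels are literally the same subset of $F(B)$. The extra detail you supply (the explicit comparison maps $\varepsilon$, $\varepsilon'$ and the homotopy argument for independence of the lift) is exactly what the paper leaves implicit.
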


\begin{remark}\label{R:bimodule}
By definition, $\overline{F}(B)$ is an abelian group. If $\Gamma$ is another ring and $B$ is a $\Lambda$-$\Gamma$ bimodule, then, since $\overline{F}$ is additive, $\overline{F}(B)$ is also a right $\Gamma$-module. Moreover, the inclusion $\overline{F}(B) \to F(B)$ is a homomorphism of right $\Gamma$-modules, i.e., $\overline{F}(B)$ is a $\Gamma$-submodule of $F(B)$.\footnote{The reader should notice that here we used the fact that the right multiplication by an element of~$\Gamma$, being a homomorphism of left $\Lambda$-modules, extends to the injective resolution of $B$, and the induced map on $R^{0}F(B)$ is independent of the extension.}
\end{remark}

The commutative diagram 
\begin{equation}\label{defining-diagram}
\begin{gathered}
 \xymatrix
	{
	0 \ar[dr]
	&
	& 0 \ar[d]
	&
	&
\\
	&\Ker F(\iota) \ar[dr]
	& \overline{F}(B) \ar@{.>}[l]_>>>>>>>{\cong}  \ar[d]
	&
	&
\\ 
	&
	& F(B) \ar[d]_{\rho_{F}(B)} \ar[dr]^{F(\iota)}
	&
	&
\\	
	&0 \ar[r]
	& R^{0}F(B) \ar[r]
	& F(I^{0}) \ar[r]^{F(\partial)}
	& F(I^{1})
	}
\end{gathered}
\end{equation}
of solid arrows, whose rows, column, and diagonal are exact, shows that the dotted canonical map 
%$ \overline{F}(B) \longrightarrow \Ker F(\iota)$ 
is an isomorphism, thereby providing a practical way of computing $\overline{F}(B)$. In particular, one does not need an injective copresentation of $B$ -- it suffices to have an embedding~$\iota$ of~$B$ in an injective module. Whence

\begin{lemma}\label{L:altinjdef}
 In the above notation, $\overline{F}(B) = \Ker F(\iota)$. \qed
\end{lemma}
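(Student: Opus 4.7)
The plan is to read the identification directly off diagram~\eqref{defining-diagram}. Specifically, since $R^{0}F(B)$ is defined as $\Ker F(\partial)$, it comes with a canonical monomorphism $j : R^{0}F(B) \hookrightarrow F(I^{0})$. By the very construction of $\rho_{F}(B)$ as the corestriction of $F(\iota)$ to its image (which lands in $\Ker F(\partial)$ because $F(\partial)\,F(\iota)=F(\partial\iota)=F(0)=0$), the composite $j\circ \rho_{F}(B)$ equals $F(\iota)$. This is exactly the commutativity of the lower triangle in~\eqref{defining-diagram}.

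Next, because $j$ is a monomorphism, we have the elementary identity $\Ker F(\iota) = \Ker\bigl(j\circ\rho_{F}(B)\bigr) = \Ker \rho_{F}(B)$. The defining exact sequence~\eqref{rho}, evaluated at $B$, gives
\[
0 \longrightarrow \overline{F}(B) \longrightarrow F(B) \overset{\rho_{F}(B)}{\longrightarrow} R^{0}F(B),
\]
so $\Ker \rho_{F}(B) = \overline{F}(B)$. Combining the two equalities yields $\overline{F}(B) = \Ker F(\iota)$.

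I would emphasize that this is literal equality of subgroups of $F(B)$, not merely a canonical isomorphism: both $\overline{F}(B)$ and $\Ker F(\iota)$ sit inside $F(B)$ as the same subset, which matches the uniqueness in the strongest sense recorded in Lemma~\ref{L:canonical}. I do not foresee any genuine obstacle — the content of the lemma is that once one draws the diagram, the only information needed about the copresentation is the injection $\iota$ itself, not the cokernel part $\partial$. The only point requiring a moment of care is verifying that the factorization of $F(\iota)$ through $R^{0}F(B)$ is indeed the natural transformation $\rho_{F}$ as defined, but this is precisely how $\rho_{F}$ was set up just before the statement.
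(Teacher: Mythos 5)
Your proof is correct and is essentially the same argument the paper extracts from diagram~\eqref{defining-diagram}: the factorization $F(\iota) = j\circ\rho_{F}(B)$ through the monomorphism $j : R^{0}F(B)\hookrightarrow F(I^{0})$ forces $\Ker F(\iota)=\Ker\rho_{F}(B)=\overline{F}(B)$ as subgroups of $F(B)$. No issues.
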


\begin{example}
 Let $\Bbbk$ be a field of characteristic zero and $\Lambda := \Bbbk[[T]]$. 
 Let $B : = \Bbbk[[T]]/(T)$; as a $\Lambda$-module, this is just~$\Bbbk$ with a trivial action of $T$.  Let $I : = \Bbbk[X]$, which we view as a $\Lambda$-module with 
 $T \cdot f : = df/dX$ for any $f \in I$. The inclusion $\iota : B \to I$ is clearly a homomorphism of $\Lambda$-modules. Moreover, this homomorphism is an essential extension -- this follows from the fact that, since $\Bbbk$ is of characteristic zero, the last nonzero higher-order derivative of a polynomial is a constant. Using again the fact that 
 $\mathrm{char}\, \Bbbk = 0$, it is not difficult to see that $I$ is a divisible 
 $\Lambda$-module, and is therefore injective. As a result, $\iota : \Bbbk \to \Bbbk[X]$ is the injective envelope. Let~$A$ be a finitely generated 
 $\Lambda$-module. We wish to compute $\Ker (A \otimes \iota)$.
 By the structure theorem for finitely generated modules over a PID, it suffices to assume that $A$ is cyclic. If $A$ is free, then $A \otimes \blank$ is an exact functor, and $\Ker (A \otimes \iota) = \{0\}$. Thus assume that 
 $A$ is not free, i.e., $A \simeq \Bbbk[[T]]/(T^{n})$ for some $n \geq 1$. Now we need to compute the kernel of 
 \[
\Bbbk[[T]]/(T^{n}) \otimes \Bbbk \lrt  \Bbbk[[T]]/(T^{n}) \otimes \Bbbk [X]
 \]
 Notice that $\Bbbk[[T]]/(T^{n}) \otimes \Bbbk [X] \simeq \Bbbk[X]/ (T^{n})\Bbbk[X]$, which is a zero module since $\Bbbk[X]$ is divisible and therefore 
 $(T^{n})\Bbbk[X] = \Bbbk[X]$. It follows that $\Ker (A \otimes \iota) \simeq \Bbbk$ for a cyclic~$A$. For a general $A$, $\Ker (A \otimes \iota) \simeq \Bbbk^{s}$, where $s$ is the number of nonzero invariant factors of~$A$.
\end{example}

The diagram~\eqref{defining-diagram} also yields

\begin{lemma}\label{L:vanish-inj}
 The injective stabilization of an additive functor vanishes on injectives. \qed
\end{lemma}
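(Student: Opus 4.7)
The plan is to apply Lemma~\ref{L:altinjdef} with a very judicious choice of embedding. That lemma says $\overline{F}(B) = \Ker F(\iota)$ for any embedding $\iota : B \hookrightarrow I$ of $B$ into an injective module~$I$, and Lemma~\ref{L:canonical} guarantees that this value does not depend on the choice of $\iota$.

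Suppose $B$ is itself injective. Then I would simply take $I := B$ and $\iota := \id_{B}$; this is a legitimate embedding of $B$ into an injective module. Since $F$ is additive, $F(\id_{B}) = \id_{F(B)}$, so $\Ker F(\iota) = 0$. By Lemma~\ref{L:altinjdef} this computes $\overline{F}(B)$, and therefore $\overline{F}(B) = 0$.

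There is essentially no obstacle here; the only thing to emphasize is that the independence of the injective embedding (Lemma~\ref{L:canonical}) is what legitimizes choosing the trivial embedding $\iota = \id_{B}$ rather than running the definition through the originally fixed copresentation~\eqref{inj-copres}. Equivalently, one may stare at diagram~\eqref{defining-diagram}: when $B$ is injective, $F(\iota)$ is a split monomorphism (indeed, already an isomorphism for the choice $I^{0}=B$), so its kernel is zero, forcing $\overline{F}(B) = 0$.
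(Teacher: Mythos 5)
Your argument is correct and is essentially the argument the paper intends: the lemma is stated with a bare \qed as an immediate consequence of diagram~\eqref{defining-diagram}, i.e.\ of the identification $\overline{F}(B)=\Ker F(\iota)$, and your observation that one may take $\iota=\id_B$ (or, for an arbitrary injective container, that $\iota$ splits and hence $F(\iota)$ is a split monomorphism) is precisely the one-line verification being left to the reader. Your appeal to Lemma~\ref{L:canonical} to justify the trivial choice of embedding is the right point to make explicit.
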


\begin{remark}
 The above lemma shows that there is no additive functor whose injective stabilization is isomorphic to a nonzero covariant Hom functor. Indeed, if $M$ is a nonzero module, then $(M, \blank)$ is nonzero on the injective envelope of  $M$, whereas any injective stabilization of a functor vanishes on it.
\end{remark}

The injective stabilization of an additive functor can also be defined by an extremal property.

\begin{proposition}\label{P:largest}
 The injective stabilization of an additive functor $F$ is the largest subfunctor of $F$ vanishing on injectives. 
\end{proposition}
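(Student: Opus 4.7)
The plan is to exploit the alternative description $\overline{F}(B) = \Ker F(\iota)$ from Lemma~\ref{L:altinjdef} together with naturality. First, Lemma~\ref{L:vanish-inj} has already shown that $\overline{F}$ itself is a subfunctor of $F$ vanishing on injectives, so what remains is the maximality statement: every subfunctor $G \subseteq F$ with $G(I) = 0$ for all injectives $I$ must satisfy $G(B) \subseteq \overline{F}(B)$ for every module $B$.

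Fix such a $G$ and an arbitrary module $B$. Choose an embedding $\iota : B \hookrightarrow I^{0}$ into an injective module; by Lemma~\ref{L:altinjdef} we have $\overline{F}(B) = \Ker F(\iota)$. Naturality of the inclusion $G \hookrightarrow F$ produces the commutative square
\[
\xymatrix{
G(B) \ar[r]^{G(\iota)} \ar@{^{(}->}[d] & G(I^{0}) \ar@{^{(}->}[d] \\
F(B) \ar[r]_{F(\iota)} & F(I^{0})
}
\]
with injective vertical arrows. Since $G(I^{0}) = 0$ by hypothesis, the composite across the top and down is zero; commutativity therefore forces $F(\iota)$ to kill the image of $G(B) \hookrightarrow F(B)$. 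Thus $G(B) \subseteq \Ker F(\iota) = \overline{F}(B)$, as required.

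There is no real obstacle here: once Lemmas~\ref{L:altinjdef} and~\ref{L:vanish-inj} are in place, the maximality is immediate from a single naturality square. The only small point to attend to is that $\overline{F}(B)$ does not depend on the particular injective into which $B$ embeds (guaranteed by Lemma~\ref{L:canonical}), so the argument above may be carried out with any chosen embedding. Assembling these pointwise inclusions yields the desired inclusion $G \subseteq \overline{F}$ as subfunctors of $F$, completing the proof.
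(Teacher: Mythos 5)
Your proposal is correct and follows essentially the same route as the paper: both reduce to the maximality claim via Lemma~\ref{L:vanish-inj} and then use the commutative square obtained by applying $G$ and $F$ to an injective embedding of $B$, with $G(I^{0})=0$ forcing $G(B)$ into $\Ker F(\iota)=\overline{F}(B)$. The paper phrases this by first observing $\overline{G}\subseteq\overline{F}$ and then $G=\overline{G}$, but the underlying diagram chase is identical to yours.
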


\begin{proof} 
 In view of Lemma~\ref{L:vanish-inj}, we only need to show that any subfunctor $G$ of $F$ vanishing on injectives is a subfunctor of $\overline{F}$. Applying $F$ and $G$ to the injective copresentation \eqref{inj-copres}, we have a commutative diagram of solid arrows with exact rows
 \[
\xymatrix
	{
	0 \ar[r]
	& \overline{G}(B) \ar@{^{(}->}[r] \ar@{.>}[d]
	& G(B) \ar[r]^{G(\iota)} \ar@{^{(}->}[d]
	& G(I) \ar@{^{(}->}[d]
\\
	0 \ar[r]
	& \overline{F}(B) \ar@{^{(}->}[r]
	& F(B) \ar[r]^{F(\iota)}
	& F(I) 
	}
\] 
which induces a monic dotted arrow and thus shows that 
$\overline{G}$ is a subfunctor of $\overline{F}$. As $G(I) = \{0\}$, 
$G = \overline{G}$, and therefore $G$ is a subfunctor of $\overline{F}$.
\end{proof}

Now we want to discuss the question of ``size'' of $\overline{F}$. More precisely, we want to know when $\overline{F}$ is smallest or largest possible or, in other words, when $\overline{F} = 0$ or, respectively, $\overline{F} = F$. These possibilities are realized when $\rho_{F} : F \longrightarrow R^{0}F$ is monic or, respectively, the zero transformation.

\begin{lemma}\label{L:mono}
 $\overline{F} = 0$ if and only if $F$ preserves monomorphisms.
\end{lemma}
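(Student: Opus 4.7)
The plan is to leverage \lemref{L:altinjdef}, which identifies $\overline{F}(B)$ with $\Ker F(\iota)$ for any embedding $\iota: B \hookrightarrow I$ into an injective. Both directions of the equivalence then reduce to elementary diagram chases.

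For the easier direction, assume $F$ preserves monomorphisms. Pick any module $B$ and any embedding $\iota: B \hookrightarrow I$ of $B$ into an injective module $I$. Since $\iota$ is a mono and $F$ preserves monos, $F(\iota)$ is injective. By \lemref{L:altinjdef}, $\overline{F}(B) = \Ker F(\iota) = 0$. As $B$ was arbitrary, $\overline{F} = 0$.

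For the converse, assume $\overline{F} = 0$ and let $f: A \to B$ be a monomorphism. Choose an embedding $\iota: B \hookrightarrow I$ with $I$ injective (possible since the domain category has enough injectives). Then the composite $\iota \circ f: A \hookrightarrow I$ is itself a monomorphism of $A$ into an injective module, so it qualifies as the embedding in the hypothesis of \lemref{L:altinjdef}. Applying that lemma to $A$ gives
\[
 \Ker F(\iota \circ f) = \overline{F}(A) = 0,
\]
so $F(\iota \circ f) = F(\iota) \circ F(f)$ is monic. Since a composition $gh$ being monic forces $h$ to be monic, $F(f)$ is monic, as desired.

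I do not expect any real obstacle: the key observation is that \lemref{L:altinjdef} lets us compute $\overline{F}(A)$ via \emph{any} mono from $A$ into an injective, not just a chosen injective envelope or copresentation. This flexibility is precisely what enables the reverse direction, since an arbitrary mono $f: A \to B$ can always be enlarged to a mono from $A$ into an injective by post-composing with an embedding of $B$.
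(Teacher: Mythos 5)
Your proof is correct and follows essentially the same route as the paper: both directions reduce to Lemma~\ref{L:altinjdef}, and the key step is to realize the given monomorphism as a factor of a monomorphism into an injective. The only cosmetic difference is that you obtain this factorization by post-composing $f$ with an embedding of its codomain into an injective, whereas the paper extends the fixed embedding $\iota$ of the domain over the given mono using injectivity --- the two factorizations play identical roles.
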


\begin{proof}
 The ``if'' part is immediate from the definition. For the ``only if'' part, suppose $i : B \longrightarrow C$ is monic. Then $\iota : B \longrightarrow I^{0}$ extends over $i$, and hence $F(\iota)$ extends over $F(i)$. By assumption, the former is monic, and therefore so is the latter. 
\end{proof}

\begin{corollary}\label{C:tensor-flat}
Let $L$ be a right $\Lambda$-module. The injective stabilization of the functor 
$L \otimes \blank$ is zero if and only if $L$ is flat. \qed
\end{corollary}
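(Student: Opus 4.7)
The plan is to obtain this as an immediate application of Lemma~\ref{L:mono} to the additive covariant functor $F := L \otimes \blank$. That lemma asserts the equivalence
\[
\overline{F} = 0 \iff F \text{ preserves monomorphisms},
\]
so the entire task reduces to matching ``$L \otimes \blank$ preserves monomorphisms'' with ``$L$ is flat''.

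For that matching, I would appeal to the standard fact that $L \otimes \blank$ is always right-exact (being a left adjoint of $\Hom_{\Z}(L,\blank)$). Consequently, preservation of monomorphisms is equivalent to exactness, which is precisely the definition of flatness of $L$ (equivalently, $\Tor_{1}^{\Lambda}(L,\blank) = 0$). Stitching the two equivalences together yields the corollary.

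There is no real obstacle: the statement is essentially a dictionary translation of Lemma~\ref{L:mono} into the terminology of tensor products. The only item worth verifying explicitly is that $L \otimes \blank$ fits the hypotheses of Lemma~\ref{L:mono}, namely that it is a covariant additive functor from left $\Lambda$-modules to abelian groups; this is immediate. Alternatively, one could argue pointwise using Lemma~\ref{L:altinjdef}: given any embedding $\iota : B \hookrightarrow I^{0}$ into an injective envelope, $\overline{F}(B) = \Ker(L \otimes \iota)$, so $\overline{F} = 0$ amounts to $L \otimes \iota$ being monic for every such $\iota$, and since every monomorphism factors through an injective envelope, this recovers the flatness of $L$.
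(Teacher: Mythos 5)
Your proposal is correct and is exactly the intended argument: the paper states this corollary with a \qed immediately after Lemma~\ref{L:mono}, relying on the same identification of ``$L \otimes \blank$ preserves monomorphisms'' with flatness of $L$ via right-exactness of the tensor product. Nothing further is needed.
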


\begin{corollary}
 For any $\Lambda$-module $A$, the injective stabilization of the covariant 
 $\Hom$ functor $(A, \blank)$ is zero. \qed
\end{corollary}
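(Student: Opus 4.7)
The plan is to invoke Lemma~\ref{L:mono} directly. That lemma reduces the vanishing of $\overline{F}$ to the question of whether $F$ preserves monomorphisms, so the entire task is to observe that the covariant Hom functor $(A,\blank)$ has this property.

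More concretely, I would argue that $(A,\blank)$ is a left-exact functor (this is a standard, well-known fact: a morphism $A \to B$ factoring through a monomorphism $B \hookrightarrow C$ is already determined by its image in $B$, which gives injectivity of $(A,B) \to (A,C)$). Since left-exact functors preserve monomorphisms, the hypothesis of Lemma~\ref{L:mono} is satisfied, and therefore $\overline{(A,\blank)} = 0$.

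There is no real obstacle here; the corollary is a one-line application of the preceding lemma. The only thing to be careful about is that $(A,\blank)$ is being viewed as a functor with values in abelian groups (or, after Remark~\ref{R:bimodule}, in right modules over the endomorphism ring of $A$), and that monomorphism-preservation holds in either of these target categories, so that Lemma~\ref{L:mono} applies unchanged.
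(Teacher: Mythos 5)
Your proof is correct and is exactly the argument the paper intends: the corollary is stated with a \qed because it is an immediate application of Lemma~\ref{L:mono}, since the covariant Hom functor is left-exact and therefore preserves monomorphisms. Nothing further is needed.
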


\begin{corollary}
 For any additive functor $F$, the injective stabilization of $R^{0}F$ or of any of its subfunctors is zero. In particular, this holds for the image of the natural transformation $\rho_{F} : F \longrightarrow R^{0}F$.
\end{corollary}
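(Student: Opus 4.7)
The plan is to reduce everything to Lemma~\ref{L:mono}, which characterizes vanishing of the injective stabilization as the preservation of monomorphisms.

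First, I would recall the Remark just after the definition of $\rho_{F}$: for any additive functor $F$, the zeroth right derived functor $R^{0}F$ is left-exact, and in particular preserves monomorphisms. By Lemma~\ref{L:mono}, this immediately gives $\overline{R^{0}F} = 0$.

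Next, I would argue that the same conclusion holds for any subfunctor $G$ of $R^{0}F$. The key observation is that if $G \hookrightarrow R^{0}F$ is a subfunctor, then for any monomorphism $i : B \to C$ the map $G(i)$ is the restriction of $R^{0}F(i)$ along the inclusions $G(B) \hookrightarrow R^{0}F(B)$ and $G(C) \hookrightarrow R^{0}F(C)$; since $R^{0}F(i)$ is monic and $G(B) \hookrightarrow R^{0}F(B)$ is monic, the composite $G(B) \to R^{0}F(C)$ is monic, hence $G(i)$ is monic. Applying Lemma~\ref{L:mono} again, $\overline{G} = 0$.

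Finally, the particular case is immediate: the image of $\rho_{F} : F \longrightarrow R^{0}F$ is, by construction, a subfunctor of $R^{0}F$, so the previous paragraph applies.

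No significant obstacle is expected; the whole argument is essentially a two-line invocation of the left-exactness of $R^{0}F$ together with Lemma~\ref{L:mono}, with the only mild subtlety being the verification that subfunctors inherit the mono-preserving property, which is a routine diagram chase.
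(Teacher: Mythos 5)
Your proof is correct and is essentially identical to the paper's: both invoke the left-exactness of $R^{0}F$ to get mono-preservation, note that subfunctors of mono-preserving functors are mono-preserving, and conclude via Lemma~\ref{L:mono}. You merely spell out the routine verification that the paper leaves implicit.
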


\begin{proof}
 $R^{0}F$ is left-exact and, in particular, preserves monomorphisms. Any subfunctor of such a functor also preserves monomorphisms.
\end{proof}

Now we look at the other extreme, when the inclusion $\overline{F} \longrightarrow F$ is an isomorphism, i.e., $\rho_{F} : F \longrightarrow R^{0}F$ 
is the zero transformation.

\begin{definition}[\cite{AB}]
The functor $F$ is said to be \texttt{injectively stable}  if the natural transformation $\overline{F} \longrightarrow F$ is an isomorphism.
\end{definition}

The next result is straightforward.

\begin{proposition}[\cite{AB}, Proposition 1.7]\label{P:inj-stable}
 The following conditions are equivalent:

\begin{enumerate}
 \item[(a)] $F$ is injectively stable.
 \smallskip
 \item [(b)] $F(I) = 0$ for all injectives $I$.
 \smallskip
 \item[(c)]  $R^{0}F = 0$. \qed
\end{enumerate}
\end{proposition}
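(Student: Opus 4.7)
The plan is to prove the three implications cyclically: (a) $\Rightarrow$ (b) $\Rightarrow$ (c) $\Rightarrow$ (a). Each step is essentially a one-liner drawing on results already established in the excerpt.

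For (a) $\Rightarrow$ (b), I would invoke Lemma~\ref{L:vanish-inj}, which says $\overline{F}$ vanishes on injectives. Since (a) asserts $\overline{F} \cong F$ via the canonical inclusion, transporting this vanishing gives $F(I) = 0$ for every injective $I$.

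For (b) $\Rightarrow$ (c), I would unwind the definition of $R^{0}F$. Given any module $B$ with injective copresentation $0 \to B \to I^{0} \to I^{1}$, the group $R^{0}F(B)$ is computed as $\Ker F(I^{0} \to I^{1})$, which is a subgroup of $F(I^{0})$. Since $F(I^{0}) = 0$ by hypothesis, $R^{0}F(B) = 0$, and this holds naturally in $B$, giving $R^{0}F = 0$.

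For (c) $\Rightarrow$ (a), I would read off the defining exact sequence~\eqref{rho}: $0 \to \overline{F} \to F \xrightarrow{\rho_{F}} R^{0}F$. If the target vanishes, then $\rho_{F}$ is the zero natural transformation and its kernel is all of $F$, so the inclusion $\overline{F} \hookrightarrow F$ is an isomorphism. (Alternatively, by Lemma~\ref{L:altinjdef}, $\overline{F}(B) = \Ker F(\iota)$ for any embedding $\iota : B \hookrightarrow I^{0}$, and the vanishing of $F$ on injectives forces this kernel to be all of $F(B)$.)

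There is no real obstacle here; all three implications are immediate from the definitions and the preceding lemmas. The only thing to be slightly careful about is naturality in (b) $\Rightarrow$ (c), namely that the vanishing of $R^{0}F$ on each object follows from the pointwise vanishing because the construction $R^{0}F$ is computed uniformly from any chosen injective copresentation.
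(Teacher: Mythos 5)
Your proposal is correct, and the paper itself offers no written proof (it labels the result ``straightforward'' and closes with \verb|\qed|), so your cyclic argument (a)~$\Rightarrow$~(b)~$\Rightarrow$~(c)~$\Rightarrow$~(a) via Lemma~\ref{L:vanish-inj}, the computation $R^{0}F(B)=\Ker F(\partial)\subseteq F(I^{0})$, and the exact sequence~\eqref{rho} is exactly the intended routine verification. Nothing is missing.
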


\begin{remark}
 Under our assumption that the domain category have enough injectives, Proposition~\ref{P:inj-stable} implies that injectively stable functors are precisely the \texttt{effaceable} functors of Grothendieck~\cite{Gr}.
\end{remark}

\begin{example}
For any $\Lambda$-module  $A$, the functor $\Ext^{1}(A, \blank)$ is injectively stable.
\end{example}

Since $\overline{F}(I) \simeq 0$ whenever $I$ is injective, we have

\begin{corollary}
 The injective stabilization of an additive functor is injectively stable. 
\end{corollary}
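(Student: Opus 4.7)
The plan is to combine Lemma~\ref{L:vanish-inj} with the equivalence (a)$\Leftrightarrow$(b) of Proposition~\ref{P:inj-stable}. First I observe that $\overline{F}$ is itself an additive functor (as noted immediately after~\eqref{rho}), so Proposition~\ref{P:inj-stable} is applicable to it in place of $F$. Thus to prove that $\overline{F}$ is injectively stable it suffices to verify condition~(b) for $\overline{F}$: namely, that $\overline{F}(I)=0$ for every injective module~$I$.

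But this vanishing is precisely the content of Lemma~\ref{L:vanish-inj} applied to the original functor~$F$. Concretely, when $I$ is injective one may take for the copresentation~\eqref{inj-copres} the embedding $\iota:=\id_{I}$, so that $F(\iota)$ is the identity on $F(I)$, and Lemma~\ref{L:altinjdef} then gives $\overline{F}(I)=\Ker F(\iota)=0$. (This is exactly the observation flagged in the sentence immediately preceding the corollary.)

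Putting the two steps together, $\overline{F}(I)=0$ for every injective~$I$, and Proposition~\ref{P:inj-stable} then upgrades this vanishing to the statement that $\overline{F}$ is injectively stable. There is no real obstacle here; the corollary is a formal idempotency assertion about the stabilization operation, and the substantive content has already been packaged in Lemma~\ref{L:vanish-inj} and Proposition~\ref{P:inj-stable}. The only thing one has to be slightly careful about is that Proposition~\ref{P:inj-stable} really is being applied to a new functor (namely $\overline{F}$), which is legitimate because the proposition was stated for an arbitrary additive functor and $\overline{F}$ is additive.
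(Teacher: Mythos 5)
Your argument is correct and coincides with the paper's own proof, which likewise cites Lemma~\ref{L:vanish-inj} together with Proposition~\ref{P:inj-stable}. The extra care you take in noting that $\overline{F}$ is additive (so that the proposition applies to it) and in spelling out the vanishing on injectives via Lemma~\ref{L:altinjdef} is sound but adds nothing beyond what the paper already packages into those two results.
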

\begin{proof}
 Follows from Lemma~\ref{L:vanish-inj} and Proposition~\ref{P:inj-stable}.
\end{proof}

We can now rephrase Proposition~\ref{P:largest}.

\begin{corollary}
The injective stabilization of an additive functor is its largest injectively stable subfunctor. \qed
\end{corollary}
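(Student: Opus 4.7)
The plan is to combine three facts already established in the excerpt: Proposition~\ref{P:largest} (that $\overline{F}$ is the largest subfunctor of $F$ vanishing on injectives), Proposition~\ref{P:inj-stable} (that a functor is injectively stable if and only if it vanishes on all injectives), and the immediately preceding corollary (that $\overline{F}$ itself is injectively stable). With these in hand, the claim is essentially a one-line bookkeeping exercise rather than a genuine argument.

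First I would observe that $\overline{F}$ is a subfunctor of $F$ by definition, and it is injectively stable by the preceding corollary. So it is at least \emph{an} injectively stable subfunctor of $F$.

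Next I would verify maximality. Suppose $G$ is any injectively stable subfunctor of $F$. By the equivalence (a)$\Leftrightarrow$(b) in Proposition~\ref{P:inj-stable}, $G$ vanishes on every injective module. Applying Proposition~\ref{P:largest}, which identifies $\overline{F}$ as the largest subfunctor of $F$ with this vanishing property, we conclude that $G$ is a subfunctor of $\overline{F}$. Combined with the previous paragraph, this gives the desired maximality statement.

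There is no real obstacle here — the work has all been done upstream. The only thing to be careful about is that ``largest'' is unambiguous: since we are comparing subfunctors of a fixed functor $F$, the partial order is simply inclusion of subfunctors (pointwise inclusion of abelian groups compatible with the functorial structure), and Proposition~\ref{P:largest} was stated in exactly these terms, so no reconciliation is needed.
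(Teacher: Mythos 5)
Your proof is correct and matches the paper's intent exactly: the paper presents this corollary as a direct rephrasing of Proposition~\ref{P:largest} via the equivalence in Proposition~\ref{P:inj-stable}, which is precisely the bookkeeping you carry out.
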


As another consequence of Proposition~\ref{P:inj-stable}, we have

\begin{lemma}
 Injectively stable functors form a Serre class, i.e., in a short exact sequence\footnote{To avoid set-theoretic difficulties, exactness, as well as all other requisite concepts, will be understood in a  componentwise sense.} 
 $0 \to F_{1} \to F \to F_{2} \to 0$ of additive functors, $F$ is injectively stable if and only if so are $F_{1}$ and $F_{2}$. 
\end{lemma}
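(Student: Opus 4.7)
The plan is to reduce the statement to the characterization of injectively stable functors given by Proposition~\ref{P:inj-stable}: a functor $G$ is injectively stable if and only if $G(I)=0$ for every injective module $I$. Once we have this criterion, the Serre-class property becomes a pointwise (indeed, injective-pointwise) statement about short exact sequences of abelian groups.

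Concretely, I would fix an arbitrary injective $\Lambda$-module $I$ and evaluate the given exact sequence of functors at $I$. Because exactness of functor sequences is defined componentwise (as noted in the footnote), we obtain a short exact sequence of abelian groups
\[
0 \longrightarrow F_{1}(I) \longrightarrow F(I) \longrightarrow F_{2}(I) \longrightarrow 0.
\]
If $F_{1}$ and $F_{2}$ are injectively stable, then $F_{1}(I)=0$ and $F_{2}(I)=0$, so the middle term is squeezed between two zero groups and hence $F(I)=0$. Conversely, if $F$ is injectively stable, then $F(I)=0$; the subgroup $F_{1}(I)$ and the quotient $F_{2}(I)$ of the zero group are both zero.

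Since $I$ was an arbitrary injective, this shows $F_{1}(I)=F_{2}(I)=0$ for all injectives $I$ in one direction and $F(I)=0$ for all injectives $I$ in the other. Applying Proposition~\ref{P:inj-stable} in both directions yields the equivalence. There is no real obstacle here; the only mild subtlety is the appeal to the convention (spelled out in the footnote) that exactness of a sequence of functors is understood componentwise, which is exactly what lets us pass from the functorial hypothesis to the pointwise test provided by Proposition~\ref{P:inj-stable}.
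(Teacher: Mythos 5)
Your proof is correct and follows exactly the paper's argument: the paper likewise derives the lemma as an immediate consequence of Proposition~\ref{P:inj-stable} together with componentwise exactness, observing that in a short exact sequence of abelian groups the middle term vanishes if and only if both end terms do. You have simply written out the details that the paper leaves implicit.
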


\begin{proof}
This is an immediate consequence of the componentwise exactness of the sequence.
\end{proof}

The above observations lead to a torsion theory on the (possibly, large) category of additive functors. For an additive functor $F$ we have, in the notation of~\eqref{rho},
 an exact sequence
 \[
\xymatrix
	{
	0 \ar[r]
 	& \overline{F} \ar[r]
 	& F \ar[r]
 	& \Imr \rho_{F} \ar[r]
 	& 0
	}
\] 
Recapping the above discussion, we have that $ \overline{F}$ is injectively stable and $\Imr \rho_{F}$ preserves monomorphisms. Let $\mathscr{T}$ be the class of injectively stable functors and $\mathscr{F}$ the class of mono-preserving functors.

\begin{proposition}\label{P:torsion}
$(\mathscr{T}, \mathscr{F})$ is a hereditary torsion theory in the (possibly, large) category of additive functors.
\end{proposition}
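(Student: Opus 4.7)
The plan is to use the standard characterization of a torsion theory in an abelian category as a pair $(\mathscr{T}, \mathscr{F})$ satisfying (i) $\Hom_{\text{Func}}(T, F) = 0$ for all $T \in \mathscr{T}$ and $F \in \mathscr{F}$, and (ii) every functor fits into a short exact sequence $0 \to T \to X \to F \to 0$ with $T \in \mathscr{T}$ and $F \in \mathscr{F}$. The exact sequence displayed just before the statement already furnishes (ii): for any additive functor $F$, the sequence $0 \to \overline{F} \to F \to \Imr \rho_{F} \to 0$ has $\overline{F} \in \mathscr{T}$ (by the corollary just established) and $\Imr \rho_{F} \in \mathscr{F}$ (being a subfunctor of the left-exact functor $R^{0}F$, it preserves monos).

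The main content is verifying (i). Let $\eta : T \to F$ be a natural transformation with $T \in \mathscr{T}$ and $F \in \mathscr{F}$, and let $B$ be any module. Embed $\iota : B \hookrightarrow I$ into an injective module. Naturality of $\eta$ gives the commutative square
\[
\xymatrix{
T(B) \ar[r]^{\eta_{B}} \ar[d]_{T(\iota)} & F(B) \ar[d]^{F(\iota)} \\
T(I) \ar[r]_{\eta_{I}} & F(I)
}
\]
By Proposition~\ref{P:inj-stable}, $T(I) = 0$, so the composite $F(\iota) \circ \eta_{B} = \eta_{I} \circ T(\iota)$ vanishes. Since $F$ preserves the monomorphism $\iota$, the map $F(\iota)$ is injective, hence $\eta_{B} = 0$. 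As $B$ was arbitrary, $\eta = 0$.

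From (i) and (ii) the usual argument gives that $\mathscr{T}$ and $\mathscr{F}$ are each determined as the left/right Hom-orthogonal of the other: if $X$ has $\Hom(X, F) = 0$ for all $F \in \mathscr{F}$, then decomposing $X$ as $0 \to T' \to X \to F' \to 0$ forces $X \twoheadrightarrow F'$ to be zero, so $X \cong T' \in \mathscr{T}$; the dual argument handles $\mathscr{F}$. This confirms $(\mathscr{T}, \mathscr{F})$ is a torsion theory.

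For the hereditary property we must show $\mathscr{T}$ is closed under subobjects. If $G$ is a subfunctor of an injectively stable $T$ and $I$ is injective, then the monic $G(I) \hookrightarrow T(I) = 0$ forces $G(I) = 0$, so $G$ is injectively stable by Proposition~\ref{P:inj-stable}. The only step that requires any real work is Hom-orthogonality, and the injective-envelope argument above handles it cleanly; the remaining items are direct applications of characterizations already recorded in this section.
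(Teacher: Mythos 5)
Your proposal is correct and follows essentially the same route as the paper: the decomposition is the same short exact sequence $0 \to \overline{F} \to F \to \Imr \rho_{F} \to 0$, orthogonality rests on the same two facts ($T$ vanishes on injectives, $F$ preserves monomorphisms), and heredity is the same subobject check. The only cosmetic difference is that you verify $\Hom(T,F)=0$ by a direct componentwise diagram chase through an injective container of $B$, whereas the paper argues at the level of functors by noting that the image of $\alpha$ is simultaneously injectively stable and mono-preserving, hence zero.
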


\begin{proof}
Clearly, $\mathscr{T}$ is closed under subobjects. In view of the above short exact sequence, we only need to show that any natural transformation
$\alpha : T \to F$ from an injectively stable functor $T$ to a mono-preserving functor $F$ is zero. Notice that $\alpha(T)$, being a subfunctor of the mono-preserving functor $F$, is itself mono-preserving, and therefore, by Lemma~\ref{L:mono}, its injective stabilization is zero.  On the other hand,
Proposition~\ref{P:inj-stable} shows that, under any transformation, the image of an injectively stable functor is still injectively stable. Thus $\alpha(T)$, being equal to its own injective stabilization, must be zero.
\end{proof}

The foregoing argument leads to another useful characterization of injective stabilization.

\begin{proposition}\label{P:maps-to-mono-preserving}
 Let $F$ and $F'$ be additive functors and suppose that $F'$ is mono-preserving. If a natural transformation $ \gamma : F \to F'$  evaluates to an isomorphism on each injective, then 
 $\ker \gamma$ is the injective stabilization of~$F$. 
\end{proposition}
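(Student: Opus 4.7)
The plan is to show the two inclusions $\ker \gamma \subseteq \overline{F}$ and $\overline{F} \subseteq \ker \gamma$ as subfunctors of $F$, and then invoke Lemma~\ref{L:canonical} (or just the fact that both are concretely realized as subsets of $F(B)$ componentwise) to conclude equality.

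For the first inclusion, I would argue that $\ker \gamma$ vanishes on every injective: if $I$ is injective, then $\gamma_{I} \colon F(I) \to F'(I)$ is an isomorphism by hypothesis, so its kernel is zero. Since $\overline{F}$ is, by Proposition~\ref{P:largest}, the largest subfunctor of $F$ vanishing on injectives, it follows that $\ker \gamma \subseteq \overline{F}$.

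For the reverse inclusion, fix a module $B$ and embed it in an injective module via a monomorphism $\iota \colon B \hookrightarrow I$. By Lemma~\ref{L:altinjdef}, $\overline{F}(B) = \ker F(\iota)$. Naturality of $\gamma$ gives the commutative square
\[
\xymatrix{
F(B) \ar[r]^{F(\iota)} \ar[d]_{\gamma_{B}} & F(I) \ar[d]^{\gamma_{I}} \\
F'(B) \ar[r]_{F'(\iota)} & F'(I)
}
\]
Let $x \in \overline{F}(B)$, so $F(\iota)(x) = 0$. Then $F'(\iota)(\gamma_{B}(x)) = \gamma_{I}(F(\iota)(x)) = 0$. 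Because $F'$ is mono-preserving and $\iota$ is monic, $F'(\iota)$ is monic, forcing $\gamma_{B}(x) = 0$. Hence $x \in \ker \gamma_{B}$, and $\overline{F} \subseteq \ker \gamma$.

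There is no real obstacle here; the only point that needs care is that the hypothesis on $\gamma$ is only used on injectives to get the first inclusion, while the mono-preserving hypothesis on $F'$ is what is needed to get the second. Combining the two inclusions yields $\ker \gamma = \overline{F}$.
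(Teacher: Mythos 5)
Your proof is correct. It shares with the paper the use of Proposition~\ref{P:largest} to get $\ker \gamma \subseteq \overline{F}$ from the vanishing of $\ker\gamma$ on injectives, but it handles the reverse inclusion differently. The paper shows directly that $\ker\gamma$ is the \emph{largest} subfunctor of $F$ vanishing on injectives: given any subfunctor $i : G \hookrightarrow F$ vanishing on injectives, it invokes the torsion-theoretic orthogonality of Proposition~\ref{P:torsion} (every natural transformation from an injectively stable functor to a mono-preserving one is zero) to conclude that $\gamma i = 0$, so that $i$ factors through $\ker\gamma$; this gives $G \subseteq \ker\gamma$ for all such $G$, in particular for $G = \overline{F}$. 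You instead prove only the containment $\overline{F} \subseteq \ker\gamma$ that is actually needed, by an explicit componentwise computation: using the concrete description $\overline{F}(B) = \Ker F(\iota)$ of Lemma~\ref{L:altinjdef}, the naturality square for $\gamma$ along $\iota : B \to I$, and the injectivity of $F'(\iota)$. Your route is more elementary in that it bypasses Proposition~\ref{P:torsion} entirely (in effect you re-derive the one instance of that orthogonality you need), at the cost of working with elements rather than with the universal property; the paper's route is slightly more abstract and records along the way the stronger fact that \emph{every} subfunctor of $F$ vanishing on injectives lands in $\ker\gamma$. Your closing remark about which hypothesis is used where is accurate: only monicity of $\gamma_I$ on injectives is needed for the first inclusion, and only mono-preservation of $F'$ for the second.
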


\begin{proof}
By Proposition~\ref{P:largest}, we need to show that $\ker \gamma$ is the largest subfunctor of $F$ vanishing on injectives. The vanishing property of 
$\ker \gamma$ follows immediately from the assumption. Now suppose that 
$i : G \hookrightarrow F$ is a subfunctor of $F$ vanishing on injectives. 
By Proposition~\ref{P:torsion}, the composition $\gamma i : G \lra F'$ is zero, and therefore $i$ lifts over $\ker \gamma$. Since $i$ is monic, so is the lifting, i.e., $G$ is a subfunctor of $\ker \gamma$.
\end{proof}

\begin{remark}
 The reader can rightfully expect that Proposition~\ref{P:torsion} and 
 Proposition~\ref{P:maps-to-mono-preserving} have their analogs for the projective stabilization $\underline{F}$ of an additive covariant functor~$F$, which is defined by the exact sequence 
 \[
 L_{0}F \overset{\lambda_{F}} \lrt F \lrt \underline{F} \lrt 0.
 \]
 This leads to a short exact sequence 
\[
\xymatrix
	{
	0 \ar[r]
 	& \Imr \lambda_{F} \ar[r]
 	& F \ar[r]
 	& \underline{F} \ar[r]
 	& 0,
	}
\]
where $\Imr \lambda_{F}$, being a quotient of the right-exact functor
$L_{0}F$ is epi-preserving. In fact, $\Imr \lambda_{F}$ is the largest epi-preserving subfunctor of $F$.
 Let~$\mathscr{T}$ be the class of epi-preserving functors and $\mathscr{F}$ the class of projectively stable functors. Then, similar to Proposition~\ref{P:torsion}, we have that 
$(\mathscr{T},\mathscr{F})$ is a hereditary torsion theory in the (possibly, large) category of additive functors. Moreover, similar to 
Proposition~\ref{P:maps-to-mono-preserving}, we have the following. 
Let $F$ and $F'$ be additive functors and suppose that $F'$ is epi-preserving. If a natural transformation $ \gamma : F' \to F$  evaluates to an isomorphism on each projective, then 
 $\coker \gamma$ is the projective stabilization of~$F$.
\end{remark}

\section{Injective stabilization and (co)satellites}\label{satellites}

Before making the next series of observations, we recall the definition of satellites~\cite{CE}. Let $F : \mathscr{A} \to \mathscr{B}$ be an additive (covariant) functor between abelian categories and assume that~$\mathscr{A}$ has enough projectives. The (first) left satellite $S_{1}F$ of $F$ is a functor $\mathscr{A} \to \mathscr{B}$ which is computed as follows. If~$A$ is an object 
of~$\mathscr{A}$, choose a projective syzygy sequence $0 \to \Omega A \overset{\iota}\to P \to A \to 0$ of $A$ and set $S_{1}F(A) := \Ker F(\iota)$. The values of $S_{1}F$ on morphisms are defined in an obvious way, using liftings of maps along syzygy sequences. Higher-order left satellites $S_{n}F$ of $F$ are defined inductively. Thus left satellites vanish on projectives.

Right satellites of an additive functor are defined symmetrically. Again, let 
$F : \mathscr{A} \to \mathscr{B}$ be an additive (covariant) functor between abelian categories and assume that~$\mathscr{A}$ has enough injectives.
The (first) right satellite $S^{1}F$ of $F$ is a functor $\mathscr{A} \to \mathscr{B}$ which is computed as follows. If~$A$ is an object of $\mathscr{A}$, choose an injective cosyzygy sequence $0 \to A \to I \overset{\pi}\to \Sigma A \to 0$ and set 
$S^{1}F(A) := \Coker F(\pi)$. The values of $S^{1}F$ on morphisms are defined in an obvious way, using extentions of maps along cosyzygy sequences. Higher-order right satellites $S^{n}F$ of $F$ are defined inductively. Thus right satellites vanish on injectives.

For now, we continue to assume that the domain category has enough injectives.
Immediately from the definition of the right satellite and Proposition~\ref{P:inj-stable} we have
\begin{corollary}
For any additive functor $F$ and any integer $i \geq 1$, the iterated right satellite $S^{i}F$ is injectively stable.  \qed
\end{corollary}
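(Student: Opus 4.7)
The plan is to reduce the statement to condition (b) of Proposition~\ref{P:inj-stable}: it suffices to show that $S^{i}F$ vanishes on every injective object $I$, which by that proposition is equivalent to injective stability. Since higher-order right satellites are defined by iteration, $S^{i}F = S^{1}(S^{i-1}F)$ for $i \geq 2$, a straightforward induction on $i$ reduces everything to the base case $i = 1$ applied to the functor $G := S^{i-1}F$.

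For the base case, I fix an injective object $I$. The computation of $S^{1}F(I)$ depends, a priori, on a choice of injective cosyzygy sequence $0 \to I \to J \to \Sigma I \to 0$, but a standard argument in the spirit of Lemma~\ref{L:canonical}---comparing any two such cosyzygy sequences via a lift of the identity on~$I$---shows that the resulting cokernel is independent of the choice. Since $I$ itself is injective, I can take the trivial cosyzygy sequence $0 \to I \xrightarrow{\id} I \to 0 \to 0$. The associated projection $\pi \colon I \to 0$ becomes the zero map after applying $F$, and hence $S^{1}F(I) = \Coker F(\pi) = 0$.

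Combining the base case with the inductive step then yields $S^{i}F(I) = 0$ for all $i \geq 1$ and every injective $I$, which finishes the proof via Proposition~\ref{P:inj-stable}. The only point requiring care is the legitimacy of the trivial cosyzygy choice, but this is nothing more than the independence-of-resolution argument that underlies the definition of satellites in the first place, and it is fully parallel to what was already carried out for $\overline{F}$ in Lemma~\ref{L:canonical}. There is therefore no substantive obstacle.
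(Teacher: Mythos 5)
Your proof is correct and follows the same route as the paper: the corollary is exactly the observation that right satellites vanish on injectives (which the paper records immediately after defining them, via the same splitting/trivial-cosyzygy argument you give) combined with Proposition~\ref{P:inj-stable}(b), with the case $i>1$ handled by the inductive definition $S^{i}F = S^{1}(S^{i-1}F)$. Your extra care about independence of the cosyzygy choice is fine but not a new idea.
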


\smallskip

Embedding $B$ in different injectives $I$ and $I'$, we have short exact sequences $0 \longrightarrow B \longrightarrow I \longrightarrow \Sigma B \longrightarrow 0$ and $0 \longrightarrow B \longrightarrow I' \longrightarrow \Sigma' B \longrightarrow 0$. Extending the identity map on $B$, we have a commutative diagram 
\[
\xymatrix
	{
	0 \ar[r] 
	& B \ar[r] \ar@{=}[d]
	& I \ar[r] \ar[d]
	& \Sigma B \ar[r] \ar[d]^{\beta}
	& 0
\\
	0 \ar[r] 
	& B \ar[r] 
	& I' \ar[r]
	& \Sigma' B \ar[r]
	& 0
	}
\] 
By Schanuel's lemma, $\beta$ is part of an isomorphism $\Sigma B \amalg I' \longrightarrow \Sigma' B \amalg I$. Therefore, by Lemma~\ref{L:vanish-inj} and since 
$\overline{F}$ is additive, $\overline{F}(\beta)$ is an isomorphism. Moreover, any two choices for $\beta$ differ by a map factoring through an injective and, therefore, $\overline{F}(\beta)$ does not depend on the extension of the identity map on $B$. As a consequence, we have

\begin{lemma}\label{L:Ind-of-Sigma}
 Any two choices for $\overline{F}(\Sigma B)$ are canonically isomorphic. The canonical isomorphism is determined by any extension of the identity map on 
 $B$.  \qed
\end{lemma}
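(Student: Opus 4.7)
The plan is to compare any two realizations of $\overline{F}(\Sigma B)$ via an extension of the identity map on $B$, and then to exploit two facts established earlier: the additivity of $\overline{F}$ together with Lemma~\ref{L:vanish-inj} (vanishing of $\overline{F}$ on injectives).

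First, I would fix two short exact sequences $0 \to B \to I \to \Sigma B \to 0$ and $0 \to B \to I' \to \Sigma' B \to 0$, extend the identity on $B$ via injectivity of $I'$ to a map $f : I \to I'$, and let $\beta : \Sigma B \to \Sigma' B$ denote the induced map on cokernels. Schanuel's lemma then produces an isomorphism $\Sigma B \amalg I' \to \Sigma' B \amalg I$, one of whose components (up to sign) is $\beta$, the others being maps into or out of the injectives $I, I'$. Applying the additive functor $\overline{F}$ yields $\overline{F}(\Sigma B) \amalg \overline{F}(I') \cong \overline{F}(\Sigma' B) \amalg \overline{F}(I)$. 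The two injective summands vanish by Lemma~\ref{L:vanish-inj}, so $\overline{F}(\beta)$ is itself an isomorphism.

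Next, I would verify independence from the choice of extension. If $f_{1}, f_{2} : I \to I'$ both extend the identity on $B$, then $f_{1} - f_{2}$ vanishes on $B$ and hence factors through the quotient $\pi : I \to \Sigma B$ as $f_{1} - f_{2} = \gamma \pi$ for some $\gamma : \Sigma B \to I'$. Passing to cokernels, the induced difference satisfies $\beta_{1} - \beta_{2} = \pi' \gamma$, where $\pi' : I' \to \Sigma' B$, so $\beta_{1} - \beta_{2}$ factors through the injective $I'$. By Lemma~\ref{L:vanish-inj} and additivity, $\overline{F}(\beta_{1} - \beta_{2}) = 0$, hence $\overline{F}(\beta_{1}) = \overline{F}(\beta_{2})$. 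Combining both steps yields that any two choices for $\overline{F}(\Sigma B)$ are canonically isomorphic, with the isomorphism determined by any extension of the identity.

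The main obstacle is essentially bookkeeping: one has to be careful in reading off the components of the Schanuel isomorphism to confirm that $\beta$ itself (and not just $\beta$ direct-summed with something) is the component that becomes an isomorphism after applying $\overline{F}$. Once this identification is in place, the remainder is a direct appeal to Lemma~\ref{L:vanish-inj} and the additivity of $\overline{F}$.
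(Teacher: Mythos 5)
Your argument is correct and takes essentially the same route as the paper's own proof: extend the identity on $B$, use Schanuel's lemma to exhibit $\beta$ as a component of an isomorphism $\Sigma B \amalg I' \to \Sigma' B \amalg I$, kill the injective summands via Lemma~\ref{L:vanish-inj} and the additivity of $\overline{F}$, and observe that two choices of $\beta$ differ by a map factoring through an injective. Your second paragraph merely spells out that last factorization ($f_1-f_2$ vanishing on $B$, hence $\beta_1-\beta_2=\pi'\gamma$) in more detail than the paper, which states it in one line.
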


Using Proposition~\ref{P:inj-stable} and the definition of the right satellite, we can identify the group $\overline{F}(\Sigma B)$.

\begin{lemma}
If $F$ is an injectively stable functor, then $F(\Sigma \blank)$ is a functor and we have a functor isomorphism  $F(\Sigma \blank) \simeq S^{1}F(\blank)$. As a consequence, for any additive functor $F$ and any nonnegative integer $i$, we have a functor isomorphism 
$F(\Sigma^{i} \blank) \simeq  S^{i}F(\blank)$.
\end{lemma}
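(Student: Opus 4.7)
The plan is to read off the object-level isomorphism directly from the characterization of injectively stable functors as those vanishing on all injectives (Proposition~\ref{P:inj-stable}), and then to use a Schanuel-style argument to upgrade it to a natural isomorphism of functors. First I fix a cosyzygy sequence $0 \to B \overset{\iota}{\to} I \overset{\pi}{\to} \Sigma B \to 0$ with $I$ injective and apply $F$. Invoking Proposition~\ref{P:inj-stable} to conclude that $F(I) = 0$, the map $F(\pi)\colon F(I) \to F(\Sigma B)$ is forced to be zero, so $S^{1}F(B) := \Coker F(\pi)$ coincides with $F(\Sigma B)$, the canonical quotient map furnishing the identification on objects.

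Next I would promote $F(\Sigma \blank)$ to a genuine functor in the spirit of Lemma~\ref{L:Ind-of-Sigma}. Given $f\colon B \to B'$ and chosen cosyzygy sequences, lifting $f$ along the injective embeddings produces a morphism of cosyzygy sequences and an induced map $\Sigma f\colon \Sigma B \to \Sigma B'$; any two such lifts differ by a map that factors through an injective, and, since $F = \overline{F}$ by hypothesis, Lemma~\ref{L:vanish-inj} combined with additivity of $F$ forces $F$ to annihilate the difference. Thus $F(\Sigma f)$ is independent of the chosen lift and respects composition, and the isomorphism $F(\Sigma B) \cong S^{1}F(B)$ is natural in $B$ because the same morphism of cosyzygy sequences determines both $F(\Sigma f)$ and $S^{1}F(f)$.

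For the consequence I would induct on $i$. The case $i = 0$ is trivial. For $i \geq 1$, expand $S^{i}F = S^{1}(S^{i-1}F)$; by the corollary asserting that iterated right satellites are injectively stable, $S^{i-1}F$ is injectively stable (automatically so for $i \geq 2$ as a right satellite, and for $i = 1$ directly by the standing hypothesis on $F$), so the first part of the lemma, applied to the injectively stable functor $S^{i-1}F$, gives $S^{i}F(B) \cong S^{i-1}F(\Sigma B)$; the induction hypothesis applied at the module $\Sigma B$ then identifies this last group with $F\bigl(\Sigma^{i-1}(\Sigma B)\bigr) = F(\Sigma^{i}B)$. The main obstacle I anticipate is the indeterminacy of $\Sigma$: cosyzygies are defined only up to injective summands on both modules and morphisms, and injective stability is precisely the property that kills every such indeterminacy in the values of $F$, which is why the proof of Lemma~\ref{L:Ind-of-Sigma} runs verbatim with $F$ in place of $\overline{F}$.
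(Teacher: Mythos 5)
Your argument is correct and is essentially the paper's: the entire published proof is the one-line observation that $F(I)=0$ forces $S^{1}F(B)=\Coker F(\pi)=F(\Sigma B)$, with the well-definedness and naturality of $F(\Sigma\,\blank)$ delegated to the Schanuel-type argument preceding Lemma~\ref{L:Ind-of-Sigma}, exactly as you reconstruct. Your handling of the last clause -- running the induction through the injectively stable functors $S^{i-1}F$ -- is the intended way to make the iterated statement close.
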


\begin{proof}
The desired isomorphism is an immediate consequence of the definition of the satellite and the fact that $F$ vanishes on injectives.
\end{proof}

For the next two propositions we assume that the domain 
category~$\mathscr{A}$ has enough injectives and projectives. In that case, the injective stabilization of a \texttt{half-exact} functor admits an alternative description. In~\cite{AB},  
%\marginpar{\fbox{Give a ref.}} 
Auslander and Bridger show that if~$F$ is half-exact, then $\overline{F} \simeq S^1S_1F$. Several years later, in \cite{FN}, Fisher-Palmquist and Newell establish that the satellites form an adjoint pair $(S^1,S_1)$ of endofunctors on the functor category.  Let $c:S^1S_1\to 1$ be the counit of adjunction. In~\cite{AB}, Auslander and Bridger show that the assignment $F\mapsto \overline{F}$ is functorial and is in fact the right adjoint to the inclusion functor of the injectively stable functors into the functor category, with the counit of adjunction\footnote{Of course, the authors did not know at the time that the satellites formed an adjoint pair! As we mentioned, that fact was established by Fisher-Palmquist and Newell several years later.} evaluated on $F$ being the inclusion $\alpha_F:\overline{F}\to F$. Combining these results we have

\begin{proposition}If $F$ is half-exact, then the $F$-component 
$c_F:S^1S_1F\to F$ of the counit of adjunction $c$ is isomorphic to the inclusion 
$\overline{F} \lra F$.
\end{proposition}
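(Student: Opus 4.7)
The plan is to combine the three cited ingredients by a universal-property argument, reducing everything to a pointwise compatibility check.

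First, I observe that $S^1 S_1 F$ is itself injectively stable: as a right satellite it vanishes on injectives, so Proposition~\ref{P:inj-stable} applies. By Proposition~\ref{P:largest} (equivalently, by the adjunction whose counit is $\alpha$), any natural transformation from an injectively stable functor into $F$ factors uniquely through $\alpha_F : \overline{F} \hookrightarrow F$. Applied to $c_F$, this produces a unique natural transformation $\phi : S^1 S_1 F \to \overline{F}$ with $\alpha_F \circ \phi = c_F$.

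Next, the Auslander--Bridger theorem supplies a natural isomorphism $\psi_F : \overline{F} \xrightarrow{\sim} S^1 S_1 F$ when $F$ is half-exact. The proposition reduces to the identity $\phi \circ \psi_F = \id_{\overline{F}}$, for then $\phi$ is an isomorphism whose composition with $\alpha_F$ equals $c_F$, yielding the triangle that exhibits $c_F$ as isomorphic to $\alpha_F$. Since $\alpha_F$ is monic, this is equivalent to checking $c_F \circ \psi_F = \alpha_F$.

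To verify $c_F \circ \psi_F = \alpha_F$, I would proceed pointwise. Fix an object $A$ together with a projective syzygy $0 \to \Omega A \to P \to A \to 0$ and an injective cosyzygy $0 \to A \to I \to \Sigma A \to 0$. Using half-exactness, unwind the Auslander--Bridger construction of $\psi_F(A)$ in terms of these sequences and compare it with the explicit form of $c_F(A)$ obtained by tracing the unit/counit data of the Fisher--Palmquist--Newell adjunction $S^1 \dashv S_1$. The resulting diagram chase should identify both $c_F(\psi_F(A))$ and $\alpha_F(A)$ with the canonical inclusion $\Ker F(A \to I) \hookrightarrow F(A)$ furnished by Lemma~\ref{L:altinjdef}.

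The main obstacle is precisely this final identification: the F--P--N counit is presented abstractly through an adjunction, whereas the A--B isomorphism is produced by an explicit diagram chase, and the two descriptions must be reconciled. A more conceptual route that I would also try is to apply $S_1$ to the short exact sequence $0 \to \overline{F} \to F \to \Imr \rho_F \to 0$: since $\Imr \rho_F$ is mono-preserving we have $S_1(\Imr \rho_F) = 0$, so the snake lemma shows that $S_1(\alpha_F)$ is an isomorphism. Combining this with the triangle identity $S_1(c_F)\circ \eta_{S_1 F} = \id_{S_1 F}$ for the adjunction $S^1 \dashv S_1$ should let one transport the counit to $\alpha_F$ across the A--B isomorphism without performing any explicit componentwise computation.
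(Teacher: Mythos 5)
Your first step --- producing $\phi : S^1S_1F \to \overline{F}$ with $\alpha_F\circ\phi = c_F$ from the universal property of $\alpha_F$ and the injective stability of $S^1S_1F$ --- coincides with the paper's construction of its map $\theta$. The genuine gap is in the second half: you reduce the claim to the identity $c_F\circ\psi_F = \alpha_F$ for the \emph{specific} Auslander--Bridger isomorphism $\psi_F$, and then leave its verification as a pointwise diagram chase whose difficulty you yourself flag as ``the main obstacle.'' That identity is exactly what needs proving, and neither of your proposed completions carries it out: the pointwise unwinding is not performed, and the alternative route (showing $S_1(\alpha_F)$ is an isomorphism and invoking a triangle identity) only reduces the problem, via the naturality square $c_F\circ S^1S_1(\alpha_F) = \alpha_F\circ c_{\overline{F}}$, to showing that $c_{\overline{F}}$ is an isomorphism --- which is the same statement one level down, since $\overline{F}$ is again half-exact and injectively stable. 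Knowing that $S^1S_1F$ and $\overline{F}$ are abstractly isomorphic does not by itself make your particular map $\phi$ invertible.

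The paper closes this gap without ever reconciling the two explicit constructions. It uses the universal property of the \emph{other} counit, $c_F$, in the existence direction: because $\overline{F}$ is isomorphic to $S^1S_1F$, hence to an object in the image of $S^1$, the morphism $\alpha_F : \overline{F}\to F$ factors through $c_F$, giving \emph{some} $\varphi:\overline{F}\to S^1S_1F$ with $c_F\circ\varphi = \alpha_F$; no compatibility of $\varphi$ with your $\psi_F$ is asserted or needed. Then $c_F\varphi\theta = c_F$ and $\alpha_F\theta\varphi = \alpha_F$, and the uniqueness clauses of the two universal properties force $\varphi\theta = 1$ and $\theta\varphi = 1$, so $\theta$ is an isomorphism and $\alpha_F\theta = c_F$ is the desired identification. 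To salvage your argument, replace ``the A--B isomorphism $\psi_F$ satisfies $c_F\psi_F=\alpha_F$'' by ``$\alpha_F$ admits a factorization through $c_F$ because $\overline{F}\simeq S^1S_1F$,'' and finish by uniqueness rather than by computation.
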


\begin{proof}
We have a commutative diagram with exact rows
\dia	{
	& S^1 S_1 F \ar[d]_{\theta}\ar[r]^{c_F}
	& F \ar@{=}[d]
\\
	0 \ar[r] 
	& \overline{F} \ar[d]_{\varphi}\ar[r]_{\alpha_F}
	& F \ar@{=}[d] \ar[r]_{\rho_{F}}
	& R^0 F
\\
	& S^1 S_1 F \ar[d]_{\theta} \ar[r]^{c_F} 
	& F \ar@{=}[d]
\\
	0 \ar[r]
	& \overline{F}\ar[r]_{\alpha_F}
	& F\ar[r]_{\rho_{F}}
	& R^0 F
	}
where 
\begin{enumerate}
\item $\theta$ is induced by the universal property of the counit of adjunction $\alpha$ (per Proposition~\ref{P:torsion}) and the fact that $S^1S_1F$ is injectively stable.
\smallskip
\item $\varphi$ is induced by the universal property of the counit of adjunction $c_F$ and the fact that $\overline{F}\simeq S^1S_1F$ because $F$ is half-exact.
\end{enumerate}
Thus $c_F\varphi\theta=c_F$ and $\alpha_{F} \theta \varphi = \alpha_{F}$. By the universal property of counits, $\varphi\theta = 1$ and $\theta \varphi = 1$, which is the desired claim.
%
%Because both $c_F$ is the counit of adjunction and $c_F\varphi\theta=c_F$ the morphisms $\varphi\theta=1$ because $c_F1=c_F$ and $c_F\varphi\theta=c_F$ and $\varphi\theta$ must be the unique morphism filling the diagram. Similarly because $\alpha_F$ is the counit of adjunction $\theta \varphi=1$. It follows that $\theta$ and $\varphi$ are both isomorphisms from which one deduces that $c_F$ is a kernel of the morphism $F\to R^0 F$.
\end{proof}

%\fbox{Adjunction of satellites and inj. stabilization as the counit of the adjunction.}

In the case when $F$ is not half-exact, we can only claim that the counit of adjunction factors through the injective stabilization. More precisely, we have

\begin{proposition}\label{P:dia-gen}For any additive functor $F$ there is a commutative diagram with exact rows and columns  
\dia
	{
	& 
	& 
	& 0 \ar[d] 
\\
	& 
	& 0 \ar[d] 
	& \overline{\Coker (c_F)} \ar@{..>}[d] 
\\
	& S^1 S_1 F \ar@{..>}[d]_\theta \ar[r]^{c_F}
	& F \ar[r] \ar@{=}[d] 
	& \Coker(c_F) \ar@{..>}[d]^{\gamma_F} \ar[r] 
	& 0 
\\
	0 \ar[r]
	& \overline{F} \ar@{..>}[d] \ar[r]_{\alpha_F}
	& F \ar[d] \ar[r]_{\rho_F}
	& R^0F
\\
	& \overline{\Coker(c_F)} \ar[d] 
	& 0
\\
	& 0
	}
%\begin{center}\begin{tikzpicture}
%\matrix (m) [ampersand replacement= \&,matrix of math nodes, row sep=2em, 
%column sep=2.5em,text height=1.5ex,text depth=0.25ex] 
%{\&\&\&0\\
%\&\&0\&\overline{\textrm{Coker}(c_F)}\\
%\&S^1S_1F\&F\&\textrm{Coker}(c_F)\&0\\
%0\&\overline{F}\&F\&R^0F\\
%\&\overline{\text{Coker}(c_F)}\&0\\
%\&0\\}; 
%\path[->,thick, font=\scriptsize]
%(m-3-2)edge node[above]{$c_F$}(m-3-3)
%%(m-1-2) edge[dashed] node[right]{$\varphi$}(m-2-1)
%(m-3-2) edge node[left]{$\varphi$}(m-4-2)
%(m-4-2) edge node{}(m-5-2)
%(m-5-2) edge node{}(m-6-2)
%(m-3-2) edge node{}(m-3-3)
%(m-3-3) edge node{}(m-3-4)
%(m-3-4) edge node{}(m-3-5)
%(m-2-3) edge node{}(m-3-3)
%(m-3-3) edge node[left]{$1$}(m-4-3)
%(m-4-1) edge node{}(m-4-2)
%(m-4-2) edge node{}(m-4-3)
%(m-4-3) edge node{}(m-4-4)
%(m-4-3) edge node{}(m-5-3)
%(m-4-2) edge node[below]{$\alpha_F$}(m-4-3)
%(m-1-4) edge node{}(m-2-4)
%(m-2-4) edge node{}(m-3-4)
%(m-3-4) edge node[right]{$\gamma_F$}(m-4-4);
%\end{tikzpicture}\end{center}
\end{proposition}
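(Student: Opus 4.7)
The plan is to construct the missing dotted arrows one at a time from universal properties already established in the excerpt, then verify exactness of the two new columns.

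First, the factorization $\theta$: since $S^{1}S_{1}F$ is a first right satellite, it vanishes on injectives, and by Proposition~\ref{P:inj-stable} it is injectively stable. As $\overline{F}$ is the largest injectively stable subfunctor of $F$ (the corollary to Proposition~\ref{P:largest}), the transformation $c_{F}:S^{1}S_{1}F \lra F$ factors uniquely through $\alpha_{F}$, yielding $\theta$ with $\alpha_{F}\theta = c_{F}$. This pins down the upper-left commutative square.

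Next, for $\gamma_{F}$: since $R^{0}F$ is left-exact and hence mono-preserving, Proposition~\ref{P:torsion} forces the composite $u_{F}c_{F}:S^{1}S_{1}F \lra R^{0}F$ to be zero. Consequently $u_{F}$ factors through the projection $q:F\twoheadrightarrow \Coker(c_{F})$, producing $\gamma_{F}$ with $\gamma_{F}q = u_{F}$. To identify $\ker\gamma_{F}$ with $\overline{\Coker(c_{F})}$ I will invoke Proposition~\ref{P:maps-to-mono-preserving} applied to $\gamma_{F}$. On any injective $I$, $S^{1}S_{1}F(I)=0$, so $c_{F}(I)=0$ and $q(I)$ is an isomorphism; also $u_{F}(I)$ is an isomorphism, as one sees by computing $R^{0}F(I)$ from the trivial injective copresentation $0\to I\to I\to 0$ of $I$. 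Hence $\gamma_{F}(I)$ is an isomorphism for every injective $I$, and Proposition~\ref{P:maps-to-mono-preserving} delivers the exact right column $0\to\overline{\Coker(c_{F})}\to\Coker(c_{F})\xrightarrow{\gamma_{F}} R^{0}F$.

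Finally, I turn to the left column $S^{1}S_{1}F\xrightarrow{\theta}\overline{F}\lra\overline{\Coker(c_{F})}\to 0$. Because $c_{F}=\alpha_{F}\theta$ with $\alpha_{F}$ monic, $\Imr(c_{F})\subseteq\overline{F}$ inside $F$, and the composite $q\alpha_{F}:\overline{F}\lra\Coker(c_{F})$ has kernel exactly $\Imr(\theta)$. Its image vanishes on injectives (since $\overline{F}$ does, by Lemma~\ref{L:vanish-inj}) and so is injectively stable by Proposition~\ref{P:inj-stable}, hence contained in $\overline{\Coker(c_{F})}$. This yields an injection $\overline{F}/\Imr(\theta)\hookrightarrow\overline{\Coker(c_{F})}$. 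For surjectivity, the further quotient $\Coker(c_{F})/(\overline{F}/\Imr(c_{F}))$ is canonically isomorphic to $F/\overline{F}=\Imr\rho_{F}$, which embeds in $R^{0}F$ and is therefore mono-preserving; by Lemma~\ref{L:mono} its injective stabilization vanishes, which forces $\overline{\Coker(c_{F})}\subseteq\overline{F}/\Imr(\theta)$. The remaining commutativities are a routine chase. The main obstacle is the bookkeeping in this last step: tracking the nested subfunctors $\Imr(c_{F})\subseteq\overline{F}\subseteq F$ and their images in $\Coker(c_{F})$, since injective stabilization interacts subtly with subquotients.
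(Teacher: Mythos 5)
Your argument is correct and follows the paper's proof in all essentials: $\theta$ comes from the universal property of $\alpha_F$ applied to the injectively stable functor $S^1S_1F$, $\gamma_F$ is the induced map on cokernels, and since $\gamma_F$ evaluates to an isomorphism on injectives, Proposition~\ref{P:maps-to-mono-preserving} identifies $\ker\gamma_F$ with $\overline{\Coker(c_F)}$. The only difference is your last step: where you track the nested subfunctors $\Imr(c_F)\subseteq\overline{F}\subseteq F$ by hand to show $\overline{F}/\Imr(\theta)\cong\overline{\Coker(c_F)}$, the paper gets the exactness of the left column in one stroke by applying the snake lemma to the two middle rows, which yields $\Coker\theta\cong\Ker\gamma_F$ directly.
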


\begin{proof}The functor $S^1S_1F$ is injectively stable and hence, by the universal property of the counit $\alpha$, one has the morphism $\theta$ and the induced morphism~$\gamma_F$.  By the exactness of the two middle rows, $\gamma_F$ is an isomorphism on injectives.  By Proposition~\ref{P:maps-to-mono-preserving}, 
$\overline{\Coker (c_F)}$ is the kernel of $\gamma_F$. By the snake lemma, $\overline{\Coker (c_F)}$ is isomorphic to the cokernel of 
$\theta$.
\end{proof}

We now introduce new concepts, the \texttt{cosatellites} of a functor. Let 
$F : \mathscr{A} \to \mathscr{B}$ be an additive covariant functor between abelian categories. Assuming that~$\mathscr{A}$ has enough projectives, choose a projective syzygy sequence 
\[
0 \to \Omega A \overset{i}\to P \to A \to 0
\] 
for each object $A$ of $\mathscr{A}$. 

\begin{definition}
 The left cosatellite $C_{1}F$ of $F$ is the functor $\mathscr{A} \to \mathscr{B}$ whose value on~$A$ is defined by setting $C_{1}F(A) := \Coker F(i)$. The values on morphisms are defined in an obvious way.
\end{definition}

Next, instead of assuming that~$\mathscr{A}$ has enough projectives, assume that~$\mathscr{A}$ has enough injectives. Choose an injective cosyzygy sequence $0 \to A \to I \overset{p}\to \Sigma A \to 0$ for each object $A$ of $\mathscr{A}$. 

\begin{definition}
The right cosatellite $C^1F$ of $F$ is the functor 
$\mathscr{A} \to \mathscr{B}$ whose value on $A$ is defined by setting $C^{1}F(A) := \Ker F(p)$. The values on morphisms are defined in an obvious way. 
\end{definition}

\begin{definition}
If $F$ is a contravariant functor, then, under the above assumptions and notation, the left cosatellite $C_{1}F$ is defined by setting
$C_{1}F(A) := \Coker F(p)$ and the right cosatellite $C^{1}F$ is defined by setting $C^{1}F(A) := \Ker F(i)$. 
\end{definition}

In keeping with the philosophy of ~\cite{AB}, we mention some basic properties of  cosatellites related to injective stabilization. Just as the satellites determine the injective stabilization of a half-exact functor, the right cosatellite determines the image of the unit of adjunction $F\to R^0F$ for a half-exact functor.

\begin{proposition}\label{P:right-cosatellite}
Suppose that $F : \mathscr{A} \to \mathscr{B}$ is a half-exact functor. Then $C^1F$ is the image of $\rho_F : F\to R^0F$.
\end{proposition}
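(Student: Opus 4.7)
The plan is to identify both $C^1 F(A)$ and the image of $\rho_F(A)$ with the same subobject $\Imr F(\iota) \subseteq F(I)$, where $0 \to A \overset{\iota}{\longrightarrow} I \overset{\pi}{\longrightarrow} \Sigma A \to 0$ is a chosen injective cosyzygy sequence. To start, I would extend $\pi$ by a monomorphism of $\Sigma A$ into an injective $I^1$, producing an injective copresentation $0 \to A \overset{\iota}{\longrightarrow} I \overset{\partial}{\longrightarrow} I^1$ whose initial piece is the given cosyzygy sequence. This ties the computation of $C^1 F(A)$ and of $R^0 F(A)$ to the same injective object $I$.

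Now I would execute two identifications. First, from the defining diagram~\eqref{defining-diagram}, the map $\rho_F(A) : F(A) \to R^0 F(A)$ followed by the canonical monomorphism $R^0 F(A) = \Ker F(\partial) \hookrightarrow F(I)$ equals $F(\iota)$. Since the second leg is monic, it restricts to an isomorphism between $\Imr \rho_F(A)$ and $\Imr F(\iota) \subseteq F(I)$. Second, half-exactness of $F$ applied to $0 \to A \to I \overset{\pi}{\longrightarrow} \Sigma A \to 0$ yields exactness of $F(A) \overset{F(\iota)}{\longrightarrow} F(I) \overset{F(\pi)}{\longrightarrow} F(\Sigma A)$ at the middle term, whence $\Imr F(\iota) = \Ker F(\pi) = C^1 F(A)$. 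Composing the two identifications gives an isomorphism $\Imr \rho_F(A) \cong C^1 F(A)$, realized through the inclusion $R^0 F(A) \hookrightarrow F(I)$.

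Finally, naturality and independence of choices need to be checked. A morphism $f : A \to A'$ lifts to a morphism of injective cosyzygy sequences (and hence of copresentations); along such a lift, $\rho_F$ is natural and the monomorphism $R^0 F(-) \hookrightarrow F(I)$ on the middle term is functorial in the copresentation, which translates into compatibility with the functorial maps defining $C^1 F$. Independence from the choice of cosyzygy I would handle as in Lemma~\ref{L:Ind-of-Sigma}, via Schanuel's lemma. This bookkeeping is the main (though routine) technical obstacle; the core content of the proposition lies in the two identifications above.
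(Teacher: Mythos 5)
Your proposal is correct and follows essentially the same route as the paper: half-exactness of $F$ applied to the cosyzygy sequence gives $\Ker F(\pi) = \Imr F(\iota)$, and the factorization of $F(\iota)$ through the monomorphism $R^0F(A) \hookrightarrow F(I)$ identifies $\Imr F(\iota)$ with $\Imr \rho_F(A)$ (the paper phrases this second step via $\overline{F}(A) = \Ker F(\iota)$ from Lemma~\ref{L:altinjdef}, but it is the same identification). The naturality check you flag is likewise dispatched in the paper as routine.
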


\begin{proof}
The cosyzygy sequence $0 \to A \overset{\iota}\to I \overset{p}\to \Sigma A \to 0$ and the fact that $F$ is half-exact, we have, upon applying Lemma~\ref{L:altinjdef}, the exact sequence 
\[
0\to\overline{F}(A)\to F(A)\overset{F(\iota)}\to F(I)\overset{F(p)}\to F(\Sigma A)
\]
From the defining short exact sequence 
\[
0\to \overline{F}(A)\to F(A)\to \Imr{\rho_F}(A)\to 0
\] 
it follows that $C^1F(A)=\Ker{F(p)} \simeq \Imr{\rho_F}(A)$ which is easily seen to be natural in $A$.  Hence $C^1F \simeq \Imr{\rho_F}$.
\end{proof}

\begin{remark}
 One has a dual result for the zeroth left-derived functors.  Namely, for any half-exact functor $F$, the left cosatellite $C_1F$ is isomorphic to the image of  $\lambda_{F} : L_0F\to F$.
\end{remark}

\section{The injective stabilization of a finitely presented functor}\label{inj-fp}

In this section we shall review the results from~\cite{A66} (see Sections 3 and 4) that allow to produce an explicit projective resolution of the injective stabilization of a finitely presented functor.

%Recall that the functor $F$ is \texttt{finitely presented} if it is isomorphic to the cokernel of a natural transformation between representable functors. In other words, there is a morphism $f : A \lra B$ in the domain category and an exact sequence
%
%
Given a morphism $f : A \lra B$, let
\[
\xymatrix
	{
	(B,\blank) \ar[r]^{(f,\blank)}
	& (A,\blank) \ar[r]
	& F \ar[r]
	& 0
	}
\] 
be a defining sequence for the covariant functor $F$. Of interest to us is the \texttt{defect}\label{Page:defect} $w(F) : = \Ker f$ of 
$F$.\footnote{For the original definition and a more detailed study of the defect, see~\cite{A66}. Notice that the term ``defect'' is not used there. For further properties and applications of the defect, see~\cite{R}.} Thus we have an exact sequence 
\begin{equation}\label{defect}
\begin{gathered}
\xymatrix
	{
	0 \ar[r]
	& w(F) \ar[r]^{l}
	& A \ar[rr]^{f} \ar@{->>}[rd]^{p}
	&
	& B \ar[r]
	& C \ar[r]
	& 0
\\
	& 
	&
	& \Imr f \ar@{>->}[ru]^{i}
	}
\end{gathered}
\end{equation}
where $(p,i)$ is the epi-mono factorization of $f$ and $C := \Coker f$. Associated with this sequence we have a diagram of solid arrows
\begin{equation}\label{4-term}
\begin{gathered}
  \xymatrix
	{
	&
	&
	& 0 \ar[d]
	& 0 \ar[d]
\\
	0 \ar[r]
	& (C, \blank) \ar[r] \ar@{=}[d]
	& (B,\blank) \ar[r]^{(i, \blank)} \ar@{=}[d]
	& (\Imr f,\blank) \ar[r] \ar[d]^{(p, \blank)} \ar@{}[dr] |*+[o][F-] {T}
	& F_{0} \ar[d]^{\nu} \ar[r]
	& 0
\\
	0 \ar[r]
	& (C, \blank) \ar[r]
	& (B,\blank) \ar[r]^{(f,\blank)} 
	& (A,\blank) \ar[r] \ar[d]^{(l, \blank)} \ar@{}[dr] |*+[o][F-] {M}
	& F \ar[r] \ar@{..>}[d]^{\mu}
	& 0
\\
	&
	&
	& (w(F), \blank) \ar@{:}[r] \ar[d]
	& (w(F), \blank) \ar@{..>}[d]
\\
	&
	&
	& F_{1} \ar@{:}[r] \ar[d]
	& F_{1} \ar[d]
\\
	&
	&
	& 0
	& 0
	}
\end{gathered}
\end{equation}
where:
\begin{itemize}
 \item $F_{0} := \Coker (i, \blank)$;
 \smallskip
 \item $\nu : F_{0} \to F_{1}$ is the induced map on the cokernels;
 \smallskip
 \item $F_{1} := \Coker (l, \blank)$.
% 
% \item $\nu : F \to (w(F), \blank)$ is the induced map from the cokernel;
% 
% 
\end{itemize}
Since the top two rows are exact, the square $T$ is both a pullback and a pushout. In particular, the induced map $\nu : F_{0} \to F$ is monic. Furthermore, by the pushout property of the square, the induced map between the cokernels of $(p, \blank)$ and $\nu$ is an isomorphism. Pushing out that map along the monomorphism part of the epi-mono factorization of $(l, \blank)$, we define 
$\mu : F \lra (w(F), \blank)$ as the composition of the cokernel of $\nu$ and the arrow parallel to the aforementioned monomorphism. Here we used the fact that an arrow parallel to an isomorphism is an isomorphism. Thus, we have the bottom isomorphism in the square $M$. Passing to the cokernels, we have the isomorphism on the bottom of the diagram. As a result, we have a commutative diagram with exact rows and columns.

As we saw in Proposition~\ref{P:rho-defect}, 
%Since a contravariant Hom functor with an injective argument is exact, both $F_{0}$ and $F_{1}$ vanish on injectives and are therefore injectively stable. In particular, 
$\mu : F \to \big(w(F),\blank \big)$ 
%is an isomorphism on injectives. Together with the fact that $\big(w(F),\blank \big)$ is left-exact, this shows that $\mu$ 
is isomorphic to the canonical morphism $\rho : F \to R^0F$.  
Moreover, since the kernel and the cokernel of any natural transformation between finitely presented functors are finitely presented (\cite[Proposition~3.1]{A82}), both $F_{0}$ and $F_{1}$ are finitely presented. The next result provides explicit projective resolutions of $F_{0}$ and $F_{1}$.

%In \cite{A66}, Auslander argues\footnote{See the argument at the bottom of page 209 and the beginning of page 210.} that 
%$\big(w(F),\blank \big) \simeq R^0F$\label{R0} for any finitely presented functor~$F$, and that the morphism $\mu : F \to\big(w(F),\blank\big)$ from~\eqref{4-term} is the canonical morphism $F\to R^0F$.  As a result, we have

\begin{proposition}\label{P:p-resolution-of-inj-stab}
Let $F$ be a finitely presented covariant functor. Then:
\begin{enumerate}
\item $\overline{F} = F_{0}$;
\smallskip
\item $F$ is injectively stable if and only if its defect $w(F)$ is zero;
\smallskip
\item given a presentation 
\dia
	{
	(B,\blank) \ar[r]^{(f,\blank)} 
	& (A,\blank) \ar[r]
	& F \ar[r]
	& 0,
	}   
$\overline{F} = F_{0}$ has a projective resolution 
\dia
	{
	0 \ar[r]
	& (C,-) \ar[r]
	& (B,\blank) \ar[r]^{(i, \blank)} 
	& (\Imr f, \blank) \ar[r] 
	& F_{0} \ar[r]
	& 0,
	} 
where $C := \Coker f$ and $i$ is the monic part of the epi-mono factorization $(p,i)$ of $f : A \to B$;
\smallskip
\item the exact sequence 
\[
\xymatrix
	{
	0 \ar[r]
	& w(F) \ar[r]^{l}
	& A \ar[r]^{p} 
	& \Imr f \ar[r]
	& 0
	}
\] 
gives rise to a projective resolution of $F_{1}$:
\[
\xymatrix
	{
	0 \ar[r]
	& (\Imr f, -) \ar[r] ^{(p,-)}
	& (A , -) \ar[r]^{(l,-)}
	& (w(F), -) \ar[r]
	& F_{1} \ar[r]
	& 0
	}
\] 
\end{enumerate}
\end{proposition}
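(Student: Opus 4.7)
The plan is to read off all four claims from diagram~\eqref{4-term}, using Auslander's identifications $(w(F),\blank) \simeq R^{0}F$ and $\mu = \rho_{F}$ recalled just above the proposition.

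For (1), splicing the rightmost column of~\eqref{4-term} yields an exact sequence
\[
0 \lra F_{0} \overset{\nu}\lra F \overset{\mu}\lra (w(F),\blank) \lra F_{1} \lra 0.
\]
Since $\mu$ is identified with $\rho_{F}$, the defining sequence~\eqref{rho} gives $\overline{F} = \Ker \rho_{F} = \Ker \mu = F_{0}$, the last equality by exactness at $F$ together with the already-noted monicity of~$\nu$. Statement (2) follows immediately: by Proposition~\ref{P:inj-stable}, $F$ is injectively stable iff $R^{0}F = 0$, which via $(w(F),\blank) \simeq R^{0}F$ is equivalent to $(w(F),\blank) = 0$, and by Yoneda this holds iff $w(F) = 0$.

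For (3), the short exact sequence $0 \to \Imr f \overset{i}\to B \to C \to 0$ from the epi-mono factorization in~\eqref{defect}, $\Hom$'d componentwise into an arbitrary target, produces the left-exact sequence of covariant representable functors
\[
0 \lra (C,\blank) \lra (B,\blank) \overset{(i,\blank)}\lra (\Imr f,\blank).
\]
Since $F_{0} := \Coker(i,\blank)$, this extends to the claimed four-term exact sequence; each of the three left-hand terms is representable and hence projective in the functor category by Yoneda. Statement (4) is entirely analogous: apply $\Hom(-,N)$ componentwise to the short exact sequence $0 \to w(F) \overset{l}\to A \overset{p}\to \Imr f \to 0$ of~\eqref{defect} and use $F_{1} := \Coker(l,\blank)$.

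The only substantive ingredient beyond bookkeeping with~\eqref{4-term} is Auslander's identification of $(w(F),\blank)$ with $R^{0}F$ and of $\mu$ with $\rho_{F}$; without it, $F_{0}$ would remain just an abstract cokernel in the diagram, decoupled from the injective stabilization. Everything else follows from the left-exactness of contravariant $\Hom$ applied componentwise and the Yoneda projectivity of covariant representables.
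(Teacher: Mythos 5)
Your proposal is correct and follows essentially the same route as the paper, which derives the proposition directly from the exactness of diagram~\eqref{4-term} together with Auslander's identifications $(w(F),\blank)\simeq R^{0}F$ and $\mu=\rho_{F}$; parts (3) and (4) are, as you say, just the left-exactness of contravariant $\Hom$ applied to the two short exact sequences in~\eqref{defect} plus Yoneda projectivity of representables. Your Yoneda argument for (2) correctly fills in the one small step the paper leaves implicit.
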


\section{Exactness properties of the injective stabilization}\label{exact}

In this subsection we shall examine various exactness properties of the injective stabilization.

%The following result was proved in~\cite{AB} under the assumption that, in addition to enough injectives, the domain category has enough projectives. The elementary proof below shows that that additional assumption is not needed.

%The injective stabilization of a half-exact functor admits an alternative description. In~\cite{AB}, Auslander \marginpar{\fbox{Give a ref.}} and Bridger show that if $F$ is a half-exact functor, then $\overline{F} \simeq S^1S_1F$. On the other hand~\cite{CE}, any satellite of a half-exact functor is itself half-exact.  This yields

\begin{lemma}~\cite[Remark (1.9)]{AB}\label{L:half-exact}
 The injective stabilization of a half-exact functor is half-exact.
\end{lemma}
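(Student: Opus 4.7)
The plan is to reduce the claim to a diagram chase using the description $\overline{F}(X)=\Ker F(\iota_X)$ from Lemma~\ref{L:altinjdef} together with the horseshoe construction for injective envelopes.

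First, given a short exact sequence $0 \to A \to B \to C \to 0$, I invoke the horseshoe lemma to produce a commutative diagram with exact rows
\[
\xymatrix{
0 \ar[r] & A \ar[r] \ar[d]_{\iota_A} & B \ar[r] \ar[d]_{\iota_B} & C \ar[r] \ar[d]^{\iota_C} & 0 \\
0 \ar[r] & I_A \ar[r] & I_B \ar[r] & I_C \ar[r] & 0
}
\]
where each $\iota_X$ is an embedding into an injective and $I_B \simeq I_A \oplus I_C$, so the bottom row splits. Applying $F$ then gives a commutative diagram
\[
\xymatrix{
F(A) \ar[r] \ar[d]_{F(\iota_A)} & F(B) \ar[r] \ar[d]_{F(\iota_B)} & F(C) \ar[d]^{F(\iota_C)} \\
0 \ar[r] & F(I_A) \ar[r] & F(I_B) \ar[r] & F(I_C) \ar[r] & 0
}
\]
whose bottom row remains short exact (since $F$ is additive and the row splits) and whose top row is exact at $F(B)$ (since $F$ is half-exact).

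By Lemma~\ref{L:altinjdef}, $\overline{F}(X)$ is the kernel of the vertical arrow $F(\iota_X)$, so there is a naturally induced sequence $\overline{F}(A)\to\overline{F}(B)\to\overline{F}(C)$. Exactness at $\overline{F}(B)$ follows from a short chase: if $b\in \overline{F}(B)$ maps to zero in $\overline{F}(C)$, then along the inclusion $\overline{F}(C)\hookrightarrow F(C)$ its image in $F(C)$ is zero, so by half-exactness of the top row $b$ lifts to some $a\in F(A)$; commutativity together with the monicity of $F(I_A)\to F(I_B)$ (from the split short exact bottom row) forces $a\in \Ker F(\iota_A)=\overline{F}(A)$, as required.

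There is no deep obstacle here; the proof is essentially a repackaging of the horseshoe lemma with a snake-style chase. The only mild subtlety is ensuring that the horseshoe construction works with arbitrary embeddings into injectives (rather than injective envelopes), but this is routine: one takes any embeddings $\iota_A$ and $\iota_C$, extends $\iota_A$ along $A\hookrightarrow B$ to a map $B\to I_A$, and assembles $\iota_B:=(\text{that map},\,\iota_C\circ(B\to C)):B\to I_A\oplus I_C$.
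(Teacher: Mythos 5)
Your proof is correct and is essentially the paper's own argument: the paper likewise applies the horseshoe lemma to injective containers of $A$ and $C$, applies $F$ to get a diagram whose bottom row is split short exact and whose middle row is exact at $F(B)$, and concludes by the same snake-style chase using $\overline{F}(X)=\Ker F(\iota_X)$. The extra remark about assembling $\iota_B$ from arbitrary injective embeddings is exactly the routine point the paper leaves implicit.
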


\begin{proof}
 Let $0 \to A \to B \to C \to 0$ be exact. Applying the horseshoe lemma to  injective containers of $A$ and $C$, followed by $F$ results in a commutative diagram with exact bottom two rows and columns:
 \[
\xymatrix
	{
	& 0 \ar[d]
	& 0 \ar[d]
	& 0 \ar[d]
\\
	& \overline{F}(A) \ar[d] \ar[r]
	& \overline{F}(B) \ar[d] \ar[r]
	& \overline{F}(C) \ar[d]
\\
	& {F(A)} \ar[d] \ar[r]
	& {F(B)} \ar[d] \ar[r]
	& {F(C)} \ar[d]
\\
	0 \ar[r]
	& {F(I')} \ar[r]
	& {F(I)}  \ar[r]
	& {F(I'')} \ar[r]
	& 0
	}
\] 
 The exactness of the top row now follows from a simple diagram chase.
\end{proof}

We can also easily characterize those functors whose injective stabilization is left-exact.

\begin{proposition}\label{P:left-exact}
Let $F$ be an additive functor. The following conditions are equivalent:
\begin{enumerate}
 \item $\overline{F}$ is left-exact;
 \smallskip
 \item $\overline{F} = 0$;
 \smallskip
 \item $F$ preserves monomorphisms.
\end{enumerate}
\end{proposition}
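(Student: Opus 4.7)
The equivalence $(2) \Leftrightarrow (3)$ is already on the table: it is exactly Lemma~\ref{L:mono}. The implication $(2) \Rightarrow (1)$ is trivial since the zero functor is (left-)exact. Thus the only substantive content is $(1) \Rightarrow (2)$.

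For $(1) \Rightarrow (2)$, my plan is to combine left-exactness of $\overline{F}$ with the vanishing of $\overline{F}$ on injectives (Lemma~\ref{L:vanish-inj}). A left-exact functor preserves monomorphisms, so given an arbitrary object $B$, I would pick any injective embedding $\iota : B \hookrightarrow I$ (which exists by the standing assumption that the domain category has enough injectives) and apply $\overline{F}$. By left-exactness, $\overline{F}(\iota) : \overline{F}(B) \longrightarrow \overline{F}(I)$ is monic, and by Lemma~\ref{L:vanish-inj}, $\overline{F}(I) = 0$. Therefore $\overline{F}(B) = 0$. Since $B$ was arbitrary, $\overline{F} = 0$.

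There is no real obstacle here; the proof is essentially a one-liner once the two prior lemmas are in hand. The only thing worth being careful about is not conflating ``left-exact'' with ``half-exact'' or just ``mono-preserving'' — we only need mono-preservation for the argument, which is weaker than left-exactness, so the step $(1) \Rightarrow (3)$ is immediate and one could even shortcut through Lemma~\ref{L:mono} to get $(1) \Rightarrow (2)$ directly. This gives the cyclic chain $(2) \Rightarrow (1) \Rightarrow (3) \Rightarrow (2)$, closing the equivalence.
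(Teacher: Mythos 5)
Your proof is correct and is essentially the paper's own argument: the forward implication $(1)\Rightarrow(2)$ is obtained by applying $\overline{F}$ to an injective embedding and invoking the vanishing of $\overline{F}$ on injectives, while $(2)\Rightarrow(1)$ is trivial and $(2)\Leftrightarrow(3)$ is Lemma~\ref{L:mono}. One small caution about your closing remark: $(1)\Rightarrow(3)$ is not literally ``immediate,'' since condition (3) concerns $F$ rather than $\overline{F}$, but your equivalence closes anyway via $(1)\Rightarrow(2)$ followed by Lemma~\ref{L:mono}.
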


\begin{proof}
 Suppose $\overline{F}$ is left-exact. For an arbitrary object $L$ choose an injective container $\iota : L \longrightarrow I$ and apply $\overline{F}$. Then 
 $\overline{F}(\iota)$ is monic but, by Proposition~\ref{P:inj-stable},  $\overline{F}(I) = \{0\}$. Hence $\overline{F}(L) = \{0\}$. The converse is trivial. The equivalence of the last two conditions was established in Lemma~\ref{L:mono}. 
\end{proof}

Let ${\Lambda}$ be a ring and $A$ a right $\Lambda$-module. The injective stabilization of the functor $A \otimes \blank$ will be denoted by 
$A \ot \blank$.\label{harpoon} Thus $A \ot B = (A \ot \blank)(B)$. In this context, $A$ will be said to be the \texttt{inert} variable, and $B$ will be referred to as the \texttt{active} variable. In other words, the active variable is the one being injectively resolved. In the new notation the harpoon always points to the active variable and thus $A \ot B$ will not be confused with $A \otleft B$. That these two values could be different can be seen from

\begin{example}

Take $\Lambda : = \Z$.  Then:

\begin{itemize}
 \item $\Z \ot \Q/\Z = 0$ (because $\Q/\Z$ is injective);
 \smallskip
 \item $\Z \overset{\leftharpoondown}{\otimes} \Q/\Z = \Q/\Z$
 (just tensor $0 \to \Z \to \Q$ with $\Q/\Z$).
\end{itemize}

\end{example}

Specializing Proposition~\ref{P:left-exact} to the case of the tensor product, we have

\begin{proposition}
The functor $A \ot \blank$ is left-exact if and only if $A$ is flat. In that case, $A \ot \blank = 0$. \qed
\end{proposition}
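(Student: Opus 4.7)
The plan is to invoke the preceding Proposition~\ref{P:left-exact} directly, specialized to the functor $F := A \otimes \blank$. That proposition asserts the equivalence of (i) $\overline{F}$ being left-exact, (ii) $\overline{F} = 0$, and (iii) $F$ preserving monomorphisms. Under the harpoon notation, $\overline{F} = A \ot \blank$, so conditions (i) and (ii) translate immediately into the statement being proved.

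The only thing to observe, then, is that condition (iii) -- that $A \otimes \blank$ preserves monomorphisms -- is simply the definition of $A$ being a flat right $\Lambda$-module. So the chain of implications is: $A \ot \blank$ is left-exact $\Longleftrightarrow$ $A \otimes \blank$ preserves monomorphisms (by Proposition~\ref{P:left-exact}, (i)$\Leftrightarrow$(iii)) $\Longleftrightarrow$ $A$ is flat (by definition). The ``in that case'' clause follows from the same proposition, (i)$\Leftrightarrow$(ii), or equivalently from Corollary~\ref{C:tensor-flat}, which already records that $\overline{A \otimes \blank} = 0$ precisely when $A$ is flat.

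There is no real obstacle here; the result is a one-line specialization. The only thing to be careful about is nomenclature -- ensuring that the reader recognizes that $A \ot \blank$ means the injective stabilization of $A \otimes \blank$ (as defined on the page where the harpoon is introduced) so that Proposition~\ref{P:left-exact} applies verbatim.
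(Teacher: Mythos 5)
Your proposal is correct and is exactly the paper's argument: the authors state this result as an immediate specialization of Proposition~\ref{P:left-exact} to $F = A \otimes \blank$, with flatness of $A$ being by definition the preservation of monomorphisms by $A \otimes \blank$. Nothing further is needed.
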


Next we want to investigate the question of when the injective stabilization of an additive functor is right-exact. The following example shows that, in general, the injective stabilization of an additive functor need not even preserve epimorphisms. 

\begin{example}\label{E:not-right-exact}
Let $\Bbbk$ be a field, $\Lambda := \Bbbk [x]/(x^{2})$, and
 $F := \Bbbk \otimes \blank$. Then $\Lambda$ is self-injective and therefore $\overline{F}(\Lambda) = \{0\}$. The canonical epimorphism 
$\pi : \Lambda \longrightarrow \Bbbk$ yields a short exact sequence 
 \[
 0 \longrightarrow \Bbbk \overset{\iota}\longrightarrow \Lambda \overset{\pi}\longrightarrow
 \Bbbk \longrightarrow 0
 \]
 where $\iota : \Bbbk \longrightarrow \Lambda$ is the injective envelope. Tensoring it with $\Bbbk$, we have an exact sequence
 \[
\Bbbk \overset{1 \otimes \iota}{\longrightarrow} \Bbbk \overset{1 \otimes \pi}{\longrightarrow} \Bbbk \longrightarrow 0
 \]
and thus $1 \otimes \pi$ is an isomorphism. This forces $1 \otimes \iota = 0$, showing that 
$\overline{F}(\Bbbk) \simeq \Bbbk$. Thus $\overline{F}(\pi) : 0 \to \Bbbk$ is not epic and therefore $\overline{F}$ does not preserve epimorphisms.
  
%\footnote{For a more conceptual approach to this example, see Example~\ref{E:qF} below.}
\end{example}

To deal with this kind of obstruction, we introduce

\begin{definition}\label{D:propA=FPinj}
Let $ F : \mathscr{C} \longrightarrow \mathscr{D}$ be an additive covariant functor between abelian categories, where $\mathscr{C}$ has enough injectives. We say that an object $C \in \mathscr{C}$ has property~\texttt{A} with respect to $F$ if $F(i)$ is a monomorphism whenever $i$ is a monomorphism with domain~$C$.
\end{definition}

\begin{remark}\label{R:abs-pure}
If $\mathscr{C}$ is a module category and $C$ has property \texttt{A} with respect to any tensor product functor, then $C$ is said to be absolutely pure~(\cite{Mad}) or $FP$-injective~(\cite{S}).
\end{remark}

Trivially, any injective object has property \texttt{A}.

\begin{lemma}\label{L:mono-to-inj}
 Under the above assumptions, $C$ has property \texttt{A} with respect to ${F}$
 if and only if  $F(i) : F(C) \to F(J)$ is a monomorphism for each monomorphism $i : C \to J$ with $J$ injective. 
\end{lemma}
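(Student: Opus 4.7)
The plan is to prove the two directions separately. The forward direction is immediate from the definition: if $C$ has property \texttt{A} with respect to $F$, then in particular $F(i)$ is monic for any monomorphism $i : C \to J$ with $J$ injective, since these are a special case of arbitrary monomorphisms out of $C$.

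For the converse, I would start with an arbitrary monomorphism $i : C \to D$ and reduce to the injective case by composing with an embedding of $D$ into an injective. Concretely, since $\mathscr{C}$ has enough injectives, choose a monomorphism $\iota : D \to J$ with $J$ injective. Then the composite $\iota \circ i : C \to J$ is a monomorphism into an injective. By the hypothesis, $F(\iota \circ i) = F(\iota) \circ F(i)$ is a monomorphism in $\mathscr{D}$.

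The last step is the only thing to verify: if a composition $g \circ h$ is monic, then $h$ is monic. This is a standard fact in any category, so applying it to $F(\iota) \circ F(i)$ forces $F(i)$ to be a monomorphism, which is precisely what property \texttt{A} requires. I don't anticipate any real obstacle here: the whole content of the lemma is the observation that one only needs to test the property against injective targets because arbitrary monomorphisms out of $C$ factor through monomorphisms into injectives, and the ``monic-in-a-composite'' principle propagates the monomorphism property backwards from the composite to $F(i)$.
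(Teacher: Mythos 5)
Your proof is correct and follows essentially the same route as the paper: reduce an arbitrary monomorphism out of $C$ to one landing in an injective, then use the standard fact that the first factor of a monic composite is monic. The only (harmless) difference is that the paper extends a fixed embedding $C \hookrightarrow I$ over $i$ via the injectivity of $I$ (which additionally shows a single injective container of $C$ suffices for the test), whereas you embed the codomain $D$ into an injective; both yield the required factorization.
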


\begin{proof}
 The ``only if'' part is immediate from the definition. For the other direction, use the same argument as in the proof of Lemma~\ref{L:mono}.\footnote{Clearly, it suffices to check the preservation property for a single injective container of $C$.}
\end{proof}

\begin{proposition}\label{P:epi-A}
Under the above assumptions, suppose that $\overline{F}$ preserves epimorphisms. Then all cosyzygy objects (i.e., homomorphic images of injectives\footnote{For modules over a commutative domain with this property, Matlis~\cite{Mat} uses the term $h$-\texttt{divisible}. In~\cite{GD}, the same terminology is used for modules over arbitrary rings.}) have property \texttt{A}
with respect to $F$.
\end{proposition}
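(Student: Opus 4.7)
The plan is to reduce everything to computing $\overline{F}(C)$ via Lemma \ref{L:altinjdef}, and then use the hypothesis that $\overline{F}$ preserves epimorphisms, together with the fact that $\overline{F}$ vanishes on injectives, to force $\overline{F}(C) = 0$.

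First, let $C$ be a cosyzygy object, so there exists an epimorphism $\pi \colon I \twoheadrightarrow C$ with $I$ injective. Apply $\overline{F}$ to $\pi$. By Lemma \ref{L:vanish-inj}, $\overline{F}(I) = 0$. By the standing hypothesis, $\overline{F}(\pi) \colon \overline{F}(I) \to \overline{F}(C)$ is an epimorphism, hence $\overline{F}(C) = 0$.

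Next, in order to show that $C$ has property \texttt{A} with respect to $F$, it suffices, by Lemma \ref{L:mono-to-inj}, to verify that $F(i)$ is monic for any monomorphism $i \colon C \to J$ with $J$ injective. But such an $i$ is precisely the type of embedding appearing in Lemma \ref{L:altinjdef}, so $\overline{F}(C) = \ker F(i)$. Combining with the previous step, $\ker F(i) = 0$, i.e., $F(i)$ is monic. Therefore $C$ has property \texttt{A}.

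There is no real obstacle here: the only subtlety is remembering that property \texttt{A} a priori requires preservation of \emph{all} monomorphisms out of $C$, not merely those into injectives, which is exactly the reduction provided by Lemma \ref{L:mono-to-inj}.
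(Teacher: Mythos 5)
Your proof is correct and follows essentially the same route as the paper's: apply $\overline{F}$ to an epimorphism from an injective onto the cosyzygy object to conclude $\overline{F}(C)=0$, identify $\overline{F}(C)$ with $\Ker F(i)$ for an injective container via Lemma~\ref{L:altinjdef}, and finish with Lemma~\ref{L:mono-to-inj}. No differences worth noting.
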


\begin{proof}
 Given an arbitrary object $C$, let $0 \to C \to I \overset{p}\to \Sigma C \to 0$ be exact with $C$ injective. Applying $\overline{F}$, we have that $\overline{F}(p)$ is epic and $\overline{F}(I) = (0)$, showing that $\overline{F}(\Sigma C) = (0)$. In other words, the kernel of $F(i) : F(\Sigma C) \to F(J)$ is zero for any injective container $i : \Sigma C \to J$. By Lemma~\ref{L:mono-to-inj}, $\Sigma C$ has property \texttt{A}.
\end{proof}

Now we want to show that if $F$ is right-exact, then the converse of the previous result holds. More precisely, we have

\begin{theorem}
 Let $ F : \mathscr{C} \longrightarrow \mathscr{D}$ be a right-exact (hence additive) functor between abelian categories, where $\mathscr{C}$ has enough injectives. Then  $\overline{F}$ is right-exact if and only if all cosyzygy objects in $\mathscr{C}$ have property \texttt{A} with respect to $F$.
\end{theorem}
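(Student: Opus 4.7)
The plan is to split the biconditional. The forward direction is essentially already recorded: if $\overline{F}$ is right-exact then in particular it preserves epimorphisms, so Proposition~\ref{P:epi-A} delivers property~\texttt{A} on every cosyzygy (the natural reading of ``syzygy'' in the statement, given the injective setup).

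For the converse, I first observe that $F$ right-exact implies $F$ half-exact, so by Lemma~\ref{L:half-exact} the functor $\overline{F}$ is already half-exact; the task thus reduces to showing $\overline{F}$ preserves epimorphisms. Given an arbitrary short exact sequence $0 \to A \to B \to C \to 0$, I would fix injective containers $A \hookrightarrow I'$ and $C \hookrightarrow I''$ and apply the horseshoe lemma to build a compatible injective container $B \hookrightarrow I$ with $I \cong I' \oplus I''$ and an induced cosyzygy sequence $0 \to \Sigma A \to \Sigma B \to \Sigma C \to 0$.

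Applying $F$ produces a commutative diagram with top row $F(A) \to F(B) \to F(C) \to 0$, exact by right-exactness of $F$, and bottom row $0 \to F(I') \to F(I) \to F(I'') \to 0$, split exact because $I \cong I' \oplus I''$. By Lemma~\ref{L:altinjdef} the kernels of the three vertical maps are $\overline{F}(A)$, $\overline{F}(B)$, $\overline{F}(C)$, and by right-exactness of $F$ their cokernels are $F(\Sigma A)$, $F(\Sigma B)$, $F(\Sigma C)$. The snake lemma then yields the six-term exact sequence
\[
\overline{F}(A) \to \overline{F}(B) \to \overline{F}(C) \overset{\delta}{\to} F(\Sigma A) \to F(\Sigma B) \to F(\Sigma C) \to 0,
\]
so that $\overline{F}(B) \to \overline{F}(C)$ is epic precisely when the connecting map $\delta$ vanishes. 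Since $\delta$ factors through $\ker\bigl(F(\Sigma A) \to F(\Sigma B)\bigr)$, it suffices to show that $F(\Sigma A) \to F(\Sigma B)$ is monic. But $\Sigma A$ is a cosyzygy and $\Sigma A \to \Sigma B$ is a monomorphism, so the hypothesized property~\texttt{A} for $\Sigma A$ (Definition~\ref{D:propA=FPinj}) supplies exactly the required injectivity.

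The step I expect to need the most care is the snake-lemma invocation: the top row is only right-exact, not fully exact, so we lose injectivity of $\overline{F}(A) \to \overline{F}(B)$ at the start of the sequence, but this is irrelevant for the desired conclusion; meanwhile the split exactness of the bottom row is what guarantees the sequence terminates at $F(\Sigma C) \to 0$ so that epiness of $\overline{F}(B) \to \overline{F}(C)$ is genuinely controlled by the vanishing of $\delta$. A minor bookkeeping point is ensuring the horseshoe construction is consistent with the description of $\overline{F}(-)$ via Lemma~\ref{L:altinjdef} at each of the three objects $A, B, C$, but this is componentwise and routine.
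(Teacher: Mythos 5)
Your proposal is correct and follows essentially the same route as the paper: forward direction via Proposition~\ref{P:epi-A}, reduction to epi-preservation via Lemma~\ref{L:half-exact}, then the horseshoe construction, the snake lemma, and the observation that property~\texttt{A} on the cosyzygy forces the connecting map to vanish. Your reading of ``syzygy'' as ``cosyzygy'' matches the paper's intent, and your cautionary remarks about the snake-lemma bookkeeping are exactly the points the paper's diagram~\eqref{first-square} handles.
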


\begin{proof}
The ``only if'' part is Proposition~\ref{P:epi-A}. Conversely, assume that 
 property~\texttt{A} holds for all cosyzygy modules. By Lemma~\ref{L:half-exact},  $\overline{F}$ is half-exact. Thus we only need to show that  $\overline{F}$ preserves epimorphisms. Let 
\[
0 \longrightarrow L \overset{i}{\longrightarrow} M \overset{p}{\longrightarrow} N \longrightarrow 0
\]
be a short exact sequence. We need to show that $\overline{F}(p)$ is epic. Taking cosyzygy sequences for the end terms and using the horseshoe lemma, we have a $3 \times 3$ commutative diagram with exact rows and columns, whose middle row is split-exact. Applying $F$ and using its right-exactness, we have another commutative diagram with exact rows and columns:

\begin{equation}\label{first-square}
\begin{gathered}
 \xymatrix
	{
	& \overline{F}(L) \ar[r] \ar@{^{(}->}[d] %\ar@{} [dr] |{T}
	& \overline{F}(M) \ar[r]^{\overline{F}(p)} \ar@{^{(}->}[d]
	& \overline{F}(N)\ar@{^{(}->}[d]
	&
\\
	& F(L) \ar[r] \ar[d]
	& F(M) \ar[r] \ar[d]
	& F(N) \ar[r] \ar[d]
	& 0
\\
	0 \ar[r]
	& F(I') \ar[r] \ar@{->>}[d]
	& F(I) \ar[r] \ar@{->>}[d]
	& F(I'') \ar[r] \ar@{->>}[d]
	& 0
\\
	& F(\Sigma L) \ar[r]^{F(\Sigma i)}
	& F(\Sigma M) \ar[r]
	& F(\Sigma N) \ar[r]
	& 0
	}
\end{gathered}
\end{equation}
The snake lemma yields an exact sequence
\[
\xymatrix
	{
	\overline{F}(M) \ar[r]^{\overline{F}(p)} 
	& \overline{F}(N) \ar[r]^{\delta}
	& F(\Sigma L) \ar[r]^{F(\Sigma i)}
	& F(\Sigma M)
	}
\]
where $\delta$ is the connecting homomorphism. By the assumption, 
$F(\Sigma i)$ is monic, making $\delta$  the zero map and 
$\overline{F}(p)$ epic.
  \end{proof}

Specializing to the case $F := A \ot \blank$, where $A$ is a right 
$\Lambda$-module, we can give a criterion for $\Lambda$ to have the property that the functor $A \ot \blank$ is right-exact \texttt{for any} right $\Lambda$-module~$A$.

\begin{theorem}
The functor $A \ot \blank$ is right-exact for any right $\Lambda$-module $A$ if and only if the class of all cosyzygy modules of left $\Lambda$-modules has property~\texttt{A} with respect to all tensor product functors.  \qed
\end{theorem}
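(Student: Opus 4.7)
The statement is a specialization of the immediately preceding theorem, applied uniformly in the inert variable. For a fixed right $\Lambda$-module $A$, set $F := A \otimes \blank$. This $F$ is a right-exact additive functor from the category of left $\Lambda$-modules to abelian groups, and the domain category has enough injectives, so the hypotheses of the preceding theorem are satisfied.

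That theorem then yields the equivalence: $\overline{F} = A \ot \blank$ is right-exact if and only if every cosyzygy left $\Lambda$-module has property \texttt{A} with respect to the functor $A \otimes \blank$. To obtain the statement as written, I would then quantify this bi-implication over all right $\Lambda$-modules $A$. This quantifier interchange is purely formal: from ``for every $A$, $P(A) \Leftrightarrow Q(A)$'' one extracts ``$(\forall A)\,P(A) \Leftrightarrow (\forall A)\,Q(A)$''.

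It remains to identify $(\forall A)\,Q(A)$ with the right-hand side of the theorem. The assertion ``for every right $\Lambda$-module $A$, every cosyzygy left $\Lambda$-module has property \texttt{A} with respect to $A \otimes \blank$'' is exactly the condition recorded in Remark~\ref{R:abs-pure}: each cosyzygy module is absolutely pure (equivalently, FP-injective). This is precisely what is meant by the shorthand ``the class of all cosyzygy modules of left $\Lambda$-modules has property \texttt{A} with respect to the tensor product functor''.

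There is essentially no obstacle here — the content of the theorem is entirely carried by the preceding one, and the only care needed is to recognize that the quantifier over $A$ on the functorial side matches the convention of Remark~\ref{R:abs-pure} on the module-theoretic side.
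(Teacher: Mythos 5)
Your proposal is correct and is exactly the paper's (implicit) argument: the theorem is stated as an immediate specialization of the preceding one to $F := A \otimes \blank$, quantified over all right $\Lambda$-modules $A$, with the right-hand side identified via Remark~\ref{R:abs-pure}. Nothing further is needed.
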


\begin{remark}
 In the terminology of~\cite{Mat} and~\cite{S}, the last result reads as follows: the functor $A \ot \blank$ is right-exact for any right 
 $\Lambda$-module $A$ if and only if all $h$-divisible left $\Lambda$-modules are $FP$-injective. Concerning the terminology, see Remark~\ref{R:abs-pure} and the footnote to Proposition~\ref{P:epi-A}.
\end{remark}

%\begin{proof}
%Suppose that $A \ot \blank$ is right-exact for any $A$ and let $\Sigma C$ be a cosyzygy module of some module $C$. Thus we have a short exact sequence
%\[
%0 \longrightarrow C \longrightarrow I \overset{p}{\longrightarrow} \Sigma C\longrightarrow 0
%\]
%where $I$ is injective. By assumption, $A \ot p$ is epic, but by 
%Proposition~\ref{P:inj-stable}, $A \ot I \simeq \{0\}$, and therefore 
%$A \ot \Sigma C \simeq \{0\}$. The latter means that for any monomorphism $\iota : \Sigma C \to J$, where $J$ is injective, 
%$A \otimes \iota$ is monic. By Lemma~\ref{L:mono-to-inj}, we have the desired property PI.
%
%\end{proof}

\begin{corollary}
 If $\Lambda$ is left (resp., right) semihereditary, then $A \ot \blank$ (resp., $\blank \otleft A$) is right-exact for any right (resp., left)  $\Lambda$-module $A$. 
\end{corollary}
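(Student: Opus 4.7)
The plan is to reduce to the previous theorem: I must show that under the left semihereditary hypothesis on $\Lambda$, every cosyzygy module of a left $\Lambda$-module has property~\texttt{A} with respect to every tensor product functor $A \otimes \blank$, i.e.\ (using the terminology of Remark~\ref{R:abs-pure}) is absolutely pure, equivalently FP-injective. The right semihereditary case will follow by the evident symmetric argument.

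The standard algebraic input I intend to invoke is the characterization of FP-injective modules: a left $\Lambda$-module $M$ is FP-injective iff $\Ext^{1}_{\Lambda}(N,M)=0$ for every finitely presented left $\Lambda$-module~$N$. Granting this, the crux is the observation that when $\Lambda$ is left semihereditary, every finitely presented left $\Lambda$-module has projective dimension at most one. Indeed, if $0\to K\to P_{0}\to N\to 0$ is a presentation with $P_{0}$ finitely generated projective, then $K$ is finitely generated (because $N$ is finitely presented) and is a submodule of the projective module~$P_{0}$; by the semihereditary hypothesis $K$ is projective, so $N$ admits a length-one projective resolution.

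Now let $\Sigma B$ be a cosyzygy of an arbitrary left $\Lambda$-module, so that there is a short exact sequence $0\to B\to I\to \Sigma B\to 0$ with $I$ injective. For any finitely presented $N$, the long exact sequence of $\Ext$ yields
\[
\Ext^{1}_{\Lambda}(N,I)\longrightarrow \Ext^{1}_{\Lambda}(N,\Sigma B)\longrightarrow \Ext^{2}_{\Lambda}(N,B).
\]
The left term vanishes because $I$ is injective, and the right term vanishes because $N$ has projective dimension at most one. Hence $\Ext^{1}_{\Lambda}(N,\Sigma B)=0$ for every finitely presented $N$, so $\Sigma B$ is FP-injective, and therefore has property~\texttt{A} with respect to the tensor product functor $A\otimes\blank$ for every right $\Lambda$-module~$A$. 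The previous theorem then gives that $A\ot\blank$ is right-exact. The right semihereditary case is entirely analogous, working with the cosyzygies of right $\Lambda$-modules and the functors $\blank\otleft A$.

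The only ``hard part'' is really bookkeeping: checking that the equivalence ``absolutely pure $\Leftrightarrow$ $\Ext^{1}(N,\blank)=0$ on finitely presented $N$'' is being used legitimately (it is standard and can be cited from~\cite{S}), and verifying that property~\texttt{A} with respect to \emph{every} tensor product -- which is exactly what Remark~\ref{R:abs-pure} identifies with absolute purity -- is what the preceding theorem requires.
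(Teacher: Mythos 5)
Your proof is correct, and it reduces to the preceding theorem in exactly the way the paper does: both arguments come down to showing that every cosyzygy (i.e., homomorphic image of an injective) of a left $\Lambda$-module is absolutely pure, equivalently $FP$-injective, which by Remark~\ref{R:abs-pure} is precisely ``property~\texttt{A} with respect to every tensor product functor.'' Where you differ is in how that fact is established. The paper disposes of it with a single citation to Megibben's theorem that a ring is semihereditary if and only if every homomorphic image of an $FP$-injective module is $FP$-injective, so that in particular quotients of injectives are $FP$-injective. You instead give a direct argument: over a left semihereditary ring every finitely presented left module $N$ has projective dimension at most one (the kernel of a finite presentation is a finitely generated submodule of a projective, hence projective), so $\Ext^{2}(N,\blank)=0$, and the three-term exact sequence $\Ext^{1}(N,I)\to\Ext^{1}(N,\Sigma B)\to\Ext^{2}(N,B)$ kills $\Ext^{1}(N,\Sigma B)$. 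This is sound, and in fact your argument reproves the relevant direction of Megibben's theorem (it shows any quotient of an $FP$-injective is $FP$-injective under the $\mathrm{pd}\le 1$ hypothesis), so it is more self-contained; the price is that you must also invoke the standard equivalence between absolute purity and the vanishing of $\Ext^{1}(N,\blank)$ on finitely presented $N$, which you correctly flag and which can indeed be cited from the literature. The paper's route is shorter but leans entirely on an external reference; yours is longer but elementary given the $\Ext$-characterization of $FP$-injectivity.
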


\begin{proof}
 By~\cite[Theorem 2]{Meg}, a ring is semihereditary if and only if any homomorphic image of an $FP$-injective is $FP$-injective. Since injectives are clearly $FP$-injective, any $h$-divisible module is $FP$-injective.
\end{proof}

\section{The injective stabilization of a right-exact functor}\label{right-exact}

In this section, we assume that the domain category has enough injectives and projectives. Let $F : \Lambda\textrm{-}\mathrm{Mod} \longrightarrow
\mathrm{Ab}$ be a right-exact functor. Thus $F$ is automatically additive. 

%==================

Applying $F$ to the short exact sequence 
\[
0 \longrightarrow B \longrightarrow I \longrightarrow \Sigma B \longrightarrow 0
\]
with an injective $I$ and passing to the corresponding long homology exact sequence

\begin{equation}\label{SL to inj. stab}
\begin{gathered}
 \xymatrix
	{
	\ldots \ar[r]
	& L_{1}F(I) \ar[r]
	& L_{1}F(\Sigma B) \ar[r]^{\delta} \ar@{->>}[d]
	& F(B) \ar[r]
	& F(I) \ar[r]
	& \ldots
\\
	&
	& S^{1}L_{1}F(B) \ar[r]^>>>>>{\simeq}
	& \overline{F}(B) \ar@{>->}[u]
	& 
	&
	}
\end{gathered}
\end{equation}
%\[
%\xymatrix
%	{
%	\Tor_{1}(A,I) \ar[r]
%	& \Tor_{1}(A, \Sigma B) \ar[rr]^{\delta} \ar@{->>}[d]
%	&
%	& A \otimes B \ar[r]
%	& A \otimes \Sigma B
%\\
%	& S^{1}\Tor_{1}(A, \blank)(B) \ar[rr]^>>>>>>>>>>{\simeq}
%	&
%	& A \otB \ar@{>->}[u]
%	& 
%	}
%\] 
we have a canonical isomorphism $S^{1}L_{1}F(B) \longrightarrow \overline{F}(B)$ induced by the connecting homomorphism $\delta$.\footnote{We assume that $\delta$ is chosen canonically, as in the classical proof of the snake lemma.} As a result, in the case of a right-exact functor $F$, we have yet another description of the injective stabilization.

\begin{lemma}\label{L:SL}
For a right-exact functor $F$, the connecting homomorphism 
$\delta$ in \eqref{SL to inj. stab} induces a functor isomorphism $S^{1}(L_{1}F) \simeq \overline{F}$. 
\end{lemma}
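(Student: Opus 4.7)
My plan is to read the isomorphism directly off the long exact sequence~\eqref{SL to inj. stab} and then check naturality.

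First I would unpack what is already on the page. Since $F$ is right-exact, $L_{0}F = F$, so applying $F$ to the cosyzygy sequence $0 \to B \to I \to \Sigma B \to 0$ produces the long exact sequence displayed in~\eqref{SL to inj. stab}. By exactness at $L_{1}F(\Sigma B)$, the kernel of $\delta$ equals the image of $L_{1}F(I) \to L_{1}F(\Sigma B)$. By exactness at $F(B)$ and Lemma~\ref{L:altinjdef}, the image of $\delta$ equals $\Ker F(\iota) = \overline{F}(B)$. The first isomorphism theorem therefore gives
\[
L_{1}F(\Sigma B)\big/\Imr\bigl(L_{1}F(I) \to L_{1}F(\Sigma B)\bigr) \xrightarrow{\;\overline{\delta}\;} \overline{F}(B).
\]
The left-hand side is, by the very definition of the right satellite, $S^{1}(L_{1}F)(B)$. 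This gives the componentwise isomorphism.

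Next I would verify naturality in $B$. Given a morphism $f : B \to B'$, extend it to a morphism of chosen cosyzygy sequences (this uses injectivity of $I'$). Naturality of the connecting homomorphism in the long exact sequence of left derived functors produces a commutative square relating $\delta_{B}$ and $\delta_{B'}$, and passing to the induced maps on the cokernel of $L_{1}F(I) \to L_{1}F(\Sigma B)$ on the one hand, and on $\overline{F}(B) \hookrightarrow F(B)$ on the other, yields the required naturality square. The independence of these induced maps from the choice of lifting is a standard homotopy argument (and has already been recorded, for $\overline{F}$, in Lemma~\ref{L:canonical} and~\ref{L:Ind-of-Sigma}; for $S^{1}L_{1}F$, it is built into the construction of satellites recalled in Section~\ref{satellites}).

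The main technical point, and the only place requiring a little care, is checking that the two notions of ``extending a morphism along a cosyzygy sequence'' (the one used to define $S^{1}L_{1}F$ on morphisms, and the one used to define $\overline{F}$ on morphisms) produce compatible data, so that $\overline{\delta}$ assembles into an actual natural transformation rather than just a pointwise family of isomorphisms. Once the chosen lifting of $f$ to the cosyzygy sequences is fixed, both functors compute their values on $f$ from the same comparison map $\Sigma f : \Sigma B \to \Sigma B'$, and naturality of $\delta$ with respect to morphisms of short exact sequences does the rest. I would not expect any surprises here; the lemma is essentially a bookkeeping consequence of the long exact sequence and Lemma~\ref{L:altinjdef}.
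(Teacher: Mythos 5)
Your proposal is correct and follows essentially the same route as the paper: the componentwise isomorphism is read off the long exact sequence via the first isomorphism theorem applied to $\delta$ (identifying $\Ker\delta$ with the image of $L_{1}F(I)\to L_{1}F(\Sigma B)$ and $\Imr\delta$ with $\Ker F(\iota)=\overline{F}(B)$ by Lemma~\ref{L:altinjdef}), and naturality follows from the naturality of the connecting homomorphism together with a diagram chase. Your extra care about the compatibility of the two lifting conventions is a reasonable elaboration of what the paper dismisses as a ``trivial diagram chase,'' but it introduces no new idea.
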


\begin{proof}
The componentwise isomorphisms have just been constructed. The naturality  follows from the naturality of $\delta$ and from a trivial diagram chase.
\end{proof}

\begin{remark}
Let 
\[
0 \longrightarrow B \longrightarrow I' \longrightarrow \Sigma' B \longrightarrow 0
\] 
be an embedding of $B$ in another injective $I'$. Extending the identity map on $B$ to a map of the corresponding short exact sequences and using the just proved naturality, we have a commutative diagram of isomorphisms
 \[
\xymatrix
	{
	S^{1}L_{1}F(B) \ar[r]^>>>>>{\cong} \ar[d]^{\cong}
	& \overline{F}(B) \ar[d]^{\cong}
\\
	S^{1'}L_{1}F(B) \ar[r]^>>>>>{\cong}
	& \overline{F}'(B) 	
	}
\] 
Colloquially, we shall simply say that the isomorphism of Lemma~\ref{L:SL} is  determined uniquely up to a canonical isomorphism.
\end{remark}

%==================
Suppose that
\[
0 \longrightarrow B' \longrightarrow B \longrightarrow B'' \longrightarrow 0
\]
is an exact sequence of left $\Lambda$-modules. We want to construct a long exact sequence associated with it and with the injective stabilization of $F$. 

Choose injective resolutions $I'$ of $B'$ and $I''$ of $B''$, and using the horseshoe lemma, build an injective resolution of $B$. Passing to the first cosyzygy modules and applying the functor 
$F$ to the resulting $3 \times 3$ square with exact rows and columns, we have, because $F$ is right-exact,  another commutative diagram with exact rows and columns
\begin{equation}\label{second-square}
\begin{gathered}
 \xymatrix
	{
	& \overline{F}(B') \ar[r] \ar@{^{(}->}[d] %\ar@{} [dr] |{T}
	& \overline{F}(B) \ar[r] \ar@{^{(}->}[d]
	& \overline{F}(B'' )\ar@{^{(}->}[d]
	&
\\
	& F(B') \ar[r] \ar[d]
	& F(B) \ar[r] \ar[d]
	& F(B'') \ar[r] \ar[d]
	& 0
\\
	0 \ar[r]
	& F(I'^{0}) \ar[r] \ar@{->>}[d]
	& F(I^{0}) \ar[r] \ar@{->>}[d]
	& F(I''^{0}) \ar[r] \ar@{->>}[d]
	& 0
\\
	& F(\Sigma B') \ar[r]
	& F(\Sigma B) \ar[r]
	& F(\Sigma B'') \ar[r]
	& 0
	}
\end{gathered}
\end{equation}
whose second-from-the-bottom row is split-exact by the additivity of $F$. The snake lemma gives rise to an exact sequence
\[
\xymatrix
	{
	\overline{F}(B') \ar[r] 
	& \overline{F}(B) \ar[r] 
	& \overline{F}(B'' )\ar[r]
	& F(\Sigma B') \ar[r]
	& F(\Sigma B) \ar[r]
	& F(\Sigma B'')
	}
\] 

The same argument applied to the exact sequence 
\[
0 \longrightarrow \Sigma B' \longrightarrow \Sigma B \longrightarrow \Sigma B'' \longrightarrow 0
\]
yields a similar diagram 
\[
\xymatrix
	{
	& \overline{F}(\Sigma B') \ar[r] \ar@{^{(}->}[d]
	& \overline{F}(\Sigma B) \ar[r] \ar@{^{(}->}[d]
	& \overline{F}(\Sigma B'') \ar@{^{(}->}[d]
	&
\\
	& F(\Sigma B') \ar@{}[dr] |*+[o][F-] {S} \ar[r] \ar[d]
	& F(\Sigma B) \ar[r] \ar[d]
	& F(\Sigma B'') \ar[r] \ar[d]
	& 0
\\
	0 \ar[r]
	& F(I'^{1}) \ar[r] \ar@{->>}[d]
	& F(I^{1}) \ar[r] \ar@{->>}[d]
	& F(I''^{1}) \ar[r] \ar@{->>}[d]
	& 0
\\
	& F(\Sigma^{2} B') \ar[r]
	& F(\Sigma^{2} B) \ar[r]
	& F(\Sigma^{2} B'') \ar[r]
	& 0
	}
\]
and an exact sequence 
\[
\xymatrix@C15pt
	{
	\overline{F}(\Sigma B') \ar[r] 
	& \overline{F}(\Sigma B) \ar[r] 
	& \overline{F}(\Sigma B'') \ar[r]
	& F(\Sigma^{2} B') \ar[r]
	& F(\Sigma^{2} B) \ar[r]
	& F(\Sigma^{2} B'')
	}
\]
Taking into account that the square $S$ is commutative and its bottom map is monic, we have that the connecting homomorphism 
in~\eqref{second-square} has its image in $\overline{F}(\Sigma B')$, and thus gives rise to a homomorphism 
$\delta : \overline{F}(B'') \longrightarrow \overline{F}(\Sigma B')$. 
It is easy to see that, as a result, we have an exact sequence
\[
\xymatrix
	{
	\overline{F}(B') \ar[r] 
	& \overline{F}(B) \ar[r] 
	& \overline{F}(B'')\ar[r]^{\delta}
	& \overline{F}(\Sigma B') \ar[r]
	& \overline{F}(\Sigma B) \ar[r]
	& \overline{F}(\Sigma B'')
	}
\] 
Moreover, iterating the above procedure, we have 

\begin{lemma}\label{L:to-the-right}
 The just constructed sequence of injective stabilizations
 \[
 \xymatrix@C15pt
	{
	\overline{F}(B') \ar[r] 
	& \overline{F}(B) \ar[r] 
	& \ldots \ar[r]
	& \overline{F}(\Sigma^{i} B'') \ar[r]^{\delta}
	& \overline{F}(\Sigma^{i+1} B') \ar[r]
	& \overline{F}(\Sigma^{i+1} B) \ar[r]
	& \ldots
	}
\]
is exact and is natural with respect to morphisms of short exact sequences.
\end{lemma}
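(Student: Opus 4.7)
The plan is to prove this by induction on $i$, using the six-term exact sequence constructed just above the statement of the lemma as the base case, and using the splicing mechanism at $\overline{F}(\Sigma^{i}B'')$ as the inductive step. Exactness of the first segment (through $\overline{F}(\Sigma B'')$) has essentially already been produced from the snake lemma applied to diagram \eqref{second-square}; what remains is to verify that the same construction can be iterated indefinitely, that the splicing is well-defined, and that naturality holds at every stage.

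First, I would formalize the inductive hypothesis: assume exactness has been proved through the term $\overline{F}(\Sigma^{i} B'')$ and that the splicing map $\delta_{i} : \overline{F}(\Sigma^{i} B'') \to \overline{F}(\Sigma^{i+1} B')$ arises from a connecting homomorphism for the short exact sequence $0 \to \Sigma^{i} B' \to \Sigma^{i} B \to \Sigma^{i} B'' \to 0$ (which exists because the horseshoe lemma applied inductively to chosen injective resolutions produces such cosyzygy sequences at every level). Running the argument that led to the $3\times 3$ diagram \eqref{second-square} with $(B', B, B'')$ replaced by $(\Sigma^{i}B', \Sigma^{i}B, \Sigma^{i}B'')$ produces the analogous diagram whose snake gives a six-term exact sequence terminating in $\overline{F}(\Sigma^{i+1} B')$; the same observation that the bottom horizontal map $F(\Sigma^{i+1} B') \to F(\Sigma^{i+1} B)$ is the image of a monomorphism under $F$ applied on an injective hull construction (so that the connecting map factors through $\overline{F}(\Sigma^{i+1} B')$ rather than just landing in $F(\Sigma^{i+1} B')$) shows that $\delta_{i+1}$ exists and that the spliced sequence remains exact at $\overline{F}(\Sigma^{i+1} B')$, $\overline{F}(\Sigma^{i+1} B)$, $\overline{F}(\Sigma^{i+1} B'')$.

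The key verification, and the step I expect to be the main technical obstacle, is exactness at the splicing joint $\overline{F}(\Sigma^{i+1} B')$: one needs that the image of $\delta_{i}$ equals the kernel of the map $\overline{F}(\Sigma^{i+1} B') \to \overline{F}(\Sigma^{i+1} B)$. One inclusion is automatic from the composition in the original six-term sequence, since the connecting homomorphism lands in the kernel of the subsequent map. For the reverse inclusion, I would chase through the compatibility of the two snake-lemma outputs: an element in that kernel comes (via the injection $\overline{F}(\Sigma^{i+1} B') \hookrightarrow F(\Sigma^{i+1} B')$) from an element of $F(\Sigma^{i} B'')$, and then a routine diagram chase, using both the commutativity of the cosyzygy diagrams and the injectivity of $\overline{F} \hookrightarrow F$ on each coordinate, identifies this element with one coming from $\overline{F}(\Sigma^{i} B'')$ via $\delta_{i}$.

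Finally, naturality with respect to morphisms of short exact sequences follows because every ingredient used in the construction is natural: the injective resolutions can be chosen functorially up to homotopy so that a morphism of short exact sequences induces a compatible morphism of the horseshoe-resolved diagrams, the application of $F$ is functorial, the snake connecting homomorphism is natural in commutative diagrams of short exact sequences, and the inclusions $\overline{F}(\blank) \hookrightarrow F(\blank)$ are natural by Lemma~\ref{L:canonical}. Passing to the canonical equivalence classes of $\overline{F}(\Sigma^{i}\blank)$ guaranteed by Lemma~\ref{L:Ind-of-Sigma}, one sees that the resulting doubly-infinite sequence and its splicing maps depend functorially on the original short exact sequence.
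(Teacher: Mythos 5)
Your overall strategy is the paper's: iterate the horseshoe construction, apply the snake lemma to the analogue of diagram \eqref{second-square} for $(\Sigma^{i}B', \Sigma^{i}B, \Sigma^{i}B'')$, and splice at the joints using the fact that the image of the connecting homomorphism $\delta_{i}\colon \overline{F}(\Sigma^{i}B'')\to F(\Sigma^{i+1}B')$ lies in $\overline{F}(\Sigma^{i+1}B')$. Once that factorization is known, exactness at the joint is immediate: $\Ker\bigl(\overline{F}(\Sigma^{i+1}B')\to\overline{F}(\Sigma^{i+1}B)\bigr)\subseteq \Ker\bigl(F(\Sigma^{i+1}B')\to F(\Sigma^{i+1}B)\bigr)=\Imr\delta_{i}$ by exactness of the $i$-th snake output at $F(\Sigma^{i+1}B')$, and the reverse inclusion is the factorization itself; the extra diagram chase you describe at this spot is unnecessary (the snake connecting map already has domain $\overline{F}(\Sigma^{i}B'')$, not merely $F(\Sigma^{i}B'')$), though harmless.

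The one genuine problem is your justification of that factorization. You assert that ``the bottom horizontal map $F(\Sigma^{i+1}B')\to F(\Sigma^{i+1}B)$ is the image of a monomorphism under $F$'' and deduce from this that $\delta_{i}$ lands in $\overline{F}(\Sigma^{i+1}B')$. This does not work: a right-exact $F$ need not preserve monomorphisms (that failure is the entire reason $\overline{F}$ is nonzero), and if $F(\Sigma^{i+1}B')\to F(\Sigma^{i+1}B)$ \emph{were} monic, the snake lemma would force $\delta_{i}=0$ and the splicing would be vacuous. The monomorphism that actually does the work --- this is what the square $S$ in the paper is for --- is a different one. In the commutative square
\[
\xymatrix{ F(\Sigma^{i+1}B') \ar[r] \ar[d] & F(\Sigma^{i+1}B) \ar[d] \\ F(I^{(i+1)'}) \ar[r] & F(I^{(i+1)}) }
\]
the bottom arrow is a \emph{split} monomorphism, because the horseshoe lemma gives $I^{(i+1)}\cong I^{(i+1)'}\oplus I^{(i+1)''}$ and split monomorphisms are preserved by any additive functor. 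Hence $\Ker\bigl(F(\Sigma^{i+1}B')\to F(\Sigma^{i+1}B)\bigr)\subseteq \Ker\bigl(F(\Sigma^{i+1}B')\to F(I^{(i+1)'})\bigr)=\overline{F}(\Sigma^{i+1}B')$ by Lemma~\ref{L:altinjdef}, and the image of $\delta_{i}$, being exactly the left-hand kernel, lands where it should. With this repair your induction and your naturality argument go through as described and agree with the paper's proof.
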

\begin{proof}
 The first assertion has already been established; the second follows from the functoriality of the injective stabilization and the naturality of the connecting homomorphism.
\end{proof}

\begin{proposition}
The injective stabilization of a right-exact functor is half-exact. If, in addition, the ring is hereditary, then the injective stabilization of the functor is also right-exact.
\end{proposition}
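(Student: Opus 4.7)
My plan is to prove the two assertions separately, each by invoking already-established results rather than any new computation.

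For the first assertion, I would simply observe that a right-exact functor is in particular half-exact, so Lemma~\ref{L:half-exact} applies directly and gives half-exactness of $\overline{F}$. Alternatively, one can read off middle exactness at $\overline{F}(B)$ from the long exact sequence produced in Lemma~\ref{L:to-the-right}, since that sequence was itself obtained via a snake-lemma chase of the diagram~\eqref{second-square} under the hypothesis that $F$ is right-exact.

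For the second assertion, the plan is to exploit the long exact sequence of Lemma~\ref{L:to-the-right}. Given a short exact sequence $0 \to B' \to B \to B'' \to 0$, that lemma yields the exact sequence
\[
\overline{F}(B') \longrightarrow \overline{F}(B) \longrightarrow \overline{F}(B'') \longrightarrow \overline{F}(\Sigma B'),
\]
so surjectivity of $\overline{F}(B) \to \overline{F}(B'')$ will follow as soon as I show that $\overline{F}(\Sigma B') = 0$. This is where the hereditariness hypothesis enters: over a left hereditary ring, every homomorphic image of an injective left module is again injective. Since $\Sigma B'$ was, by construction, built as a quotient of the injective module $I^{0'}$ appearing in the chosen injective resolution of $B'$, it is itself injective. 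Lemma~\ref{L:vanish-inj} then forces $\overline{F}(\Sigma B') = 0$, which completes the argument.

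The hard part is really just bookkeeping: I must invoke the correct side of hereditariness (left-hereditary when working with left modules) so that the ``quotients of injectives are injective'' characterization applies to the module category in play. Beyond that, both ingredients—the long exact sequence and the vanishing of $\overline{F}$ on injectives—are already in hand, so no genuine obstacle is expected.
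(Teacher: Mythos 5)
Your proof is correct and follows essentially the same route as the paper: both parts rest on Lemma~\ref{L:half-exact} for half-exactness and on the fact that over a (left) hereditary ring every cosyzygy module, being a quotient of an injective, is itself injective. The only cosmetic difference is that the paper kills the connecting homomorphism by noting that the bottom row of diagram~\eqref{second-square} is split exact, whereas you kill its target $\overline{F}(\Sigma B')$ outright via Lemma~\ref{L:vanish-inj}; the two observations are interchangeable.
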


\begin{proof}
 The first claim follows from Lemma~\ref{L:half-exact}.
%{L:to-the-right}. 
 To prove the second claim, notice that first cosyzygy modules are injective and therefore the bottom row of the diagram~\eqref{second-square} is a split short exact sequence, making the connecting homomorphism a zero map.
\end{proof}

Now we want to extend the sequence from Lemma~\ref{L:to-the-right} to the left so that the resulting doubly-infinite sequence be exact. To this end, we again apply $F$ (which is still assumed to be right-exact) to the short exact sequence $0 \to B' \to B \to B'' \to 0$. This yields the familiar exact sequence
\[
\xymatrix@C10pt
	{
	L_{1}F(B') \ar[r]
	& L_{1}F(B) \ar[r]
	& L_{1}F(B'') \ar[r]^-{\delta}
	& F(B') \ar[r]^-{\beta}
	& F(B) \ar[r]
	& F(B'') \ar[r]
	& 0
	}
\] 
which extends on the left to the long exact sequence of the corresponding left-derived functors of $F$. On the other hand, the last three terms of this sequence are part of the commutative diagram~\eqref{second-square} with exact rows and columns. Taking into account that the map $F(I'^{0}) \longrightarrow F(I^{0})$ in that diagram is monic, we have that the image of the connecting homomorphism $\delta : L_{1}F(B'') \to F(B')$ is in 
$\overline{F}(B')$, which yields a sequence
\[
\xymatrix@C15pt
	{
	L_{1}F(B') \ar[r]
	& L_{1}F(B) \ar[r]
	& L_{1}F(B'') \ar[r]^-{\delta'}
	& \overline{F}(B') \ar[r]^{\beta'}
	& \overline{F}(B) \ar[r]
	& \overline{F}(B'')
	}
\]
It is clear that this sequence is a complex and that it is exact, except possibly at $\overline{F}(B')$. On the other hand, 
$\Ker \beta' \subset \Ker \beta = \Imr \delta = \Imr \delta'$ (the last three terms are indeed \texttt{equal}, not just isomorphic) and therefore we have the exactness at $\overline{F}(B')$, too. We have thus proved 

\begin{proposition}\label{P:long-exact}
 Given a right-exact functor $F$ from (left or right) $\Lambda$-modules to abelian groups and a short exact sequence 
$0 \to B' \to B \to B'' \to 0$ of (left or right) modules, the foregoing constructions yield a doubly-infinite exact sequence 
\[
\xymatrix@R10pt
	{
	\ldots \ar[r]
	& L_{i}F(B') \ar[r]
	& L_{i}F(B) \ar[r]
	& L_{i}F(B'') 
	&
\\
	& \vdots
	& \vdots
	& \vdots
	&	
\\
	\ldots \ar[r]
	& L_{1}F(B') \ar[r]
	& L_{1}F(B) \ar[r]
	& L_{1}F(B'')
	&
\\
	\ar[r]
	& \overline{F}(B') \ar[r]
	& \overline{F}(B) \ar[r]
	& \overline{F}(B'')
	&
\\
	\ar[r]
	& \overline{F}(\Sigma B') \ar[r]
	& \overline{F}(\Sigma B) \ar[r]
	& \overline{F}(\Sigma B'')
	&
\\
	& \vdots
	& \vdots
	& \vdots
	&
\\
	\ldots \ar[r]
	& \overline{F}(\Sigma^{j}B') \ar[r]
	& \overline{F}(\Sigma^{j}B) \ar[r]
	& \overline{F}(\Sigma^{j}B'') \ar[r]
	& \ldots
	}
\]  \qed
\end{proposition}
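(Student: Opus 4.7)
The plan is to splice together two already-built exact sequences at a single junction vertex. The upper half of the claimed doubly-infinite sequence is the standard long exact sequence of left derived functors applied to $0\to B'\to B\to B''\to 0$; since $F$ is right-exact this ends in
\[
\cdots \to L_1F(B') \to L_1F(B) \to L_1F(B'') \overset{\delta}{\to} F(B') \overset{\beta}{\to} F(B) \to F(B'') \to 0.
\]
The lower half, from $\overline{F}(B')\to \overline{F}(B)\to \overline{F}(B'')$ onward through all iterated cosyzygies $\overline{F}(\Sigma^{j}\blank)$, is precisely the exact sequence delivered by Lemma~\ref{L:to-the-right}. The work is therefore to produce a factorization $\delta':L_1F(B'')\to \overline{F}(B')$ of $\delta$ through the inclusion $\overline{F}(B')\hookrightarrow F(B')$, and to check exactness of the spliced sequence at this one vertex.

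For the factorization I would reuse the diagram~\eqref{second-square}. Its third row reads $0\to F(I^{0'})\to F(I^{0})\to F(I^{0''})\to 0$, so $F(I^{0'})\to F(I^{0})$ is monic. Commutativity of the top two columns and rows then gives that the vertical map $F(B')\to F(I^{0'})$ equals the composite of $\beta:F(B')\to F(B)$ with $F(B)\to F(I^{0})$ followed by the inclusion $F(I^{0'})\hookrightarrow F(I^{0})$. Consequently, the kernel of that vertical map contains $\Ker\beta$; but by Lemma~\ref{L:altinjdef} the kernel of $F(B')\to F(I^{0'})$ is exactly $\overline{F}(B')$. Hence $\Imr\delta\subseteq \Ker\beta\subseteq\overline{F}(B')$, and $\delta$ factors uniquely as $\delta=\iota\circ\delta'$ with $\iota:\overline{F}(B')\hookrightarrow F(B')$ the inclusion. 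The map $\overline{F}(B')\to \overline{F}(B)$ from Lemma~\ref{L:to-the-right} is the corresponding restriction $\beta'$ of $\beta$.

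Exactness at $\overline{F}(B')$ is then an immediate chase. One direction is formal: $\iota\beta'\delta' = \beta\delta = 0$, and since $\iota$ is monic, $\beta'\delta'=0$, i.e.\ $\Imr\delta'\subseteq\Ker\beta'$. For the reverse,
\[
\Ker\beta' \;=\; \Ker\beta\cap\overline{F}(B') \;=\; \Ker\beta \;=\; \Imr\delta \;=\; \Imr\delta',
\]
where the second equality uses the containment $\Ker\beta\subseteq\overline{F}(B')$ established above, and the last is a literal equality of subgroups of $F(B')$ because $\delta'$ and $\delta$ have the same image. Exactness at all other vertices is inherited from the two source sequences. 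The main (and really only) obstacle is keeping the bookkeeping honest: one must verify the containment $\Imr\delta\subseteq\overline{F}(B')$ as an \emph{equality-of-subsets} statement — not merely up to isomorphism — so that the spliced sequence is genuinely a single chain complex rather than an identification of two separate ones.
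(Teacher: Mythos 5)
Your proposal is correct and follows essentially the same route as the paper: splice the long exact sequence of left derived functors with the sequence of Lemma~\ref{L:to-the-right} at the vertex $\overline{F}(B')$, using the monomorphism $F(I^{0'})\to F(I^{0})$ in diagram~\eqref{second-square} to get the subset containment $\Imr\delta=\Ker\beta\subseteq\overline{F}(B')$, and then checking exactness at the junction via $\Ker\beta'=\Ker\beta=\Imr\delta=\Imr\delta'$ as equalities of subgroups. The only blemish is the garbled phrasing of the commutativity relation (the stated composite does not typecheck as written), but the conclusion drawn from it is the correct one.
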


\section{The injective stabilization of the tensor product}\label{inj-tensor-prod}

Now we specialize to the case when $F$ is a univariate tensor product on a module category. More precisely, given a ring $\Lambda$ and a right 
$\Lambda$-module $A$, we are interested in the injective stabilization 
$A\, \ot \blank$ of the functor $A \otimes \blank : \Lambda\textrm{-}\mathrm{Mod} \longrightarrow \mathrm{Ab}$ (see p.~\pageref{harpoon} for notation.)

If $P$ is a projective right module then, by Corollary~\ref{C:tensor-flat}, $P\, \ot\, \blank$ is a zero functor. This, together with Schanuel's lemma, yields

\begin{lemma}\label{L:Omega}
The operation $\Omega A\, \ot\, \blank$, where $\Omega A$ denotes a first syzygy module of~$A$, is determined uniquely up to a canonical isomorphism, regardless of the choice of the projective resolution of  $A$. The canonical isomorphism is induced by any comparison map between the projective resolutions of $A$. This operation is a functor. \qed
\end{lemma}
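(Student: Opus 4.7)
The plan leans on two ingredients established earlier in the paper: (i) the formation of injective stabilization is functorial in the inert variable -- if $\varphi : X \to Y$ is a morphism of right $\Lambda$-modules, then by Proposition~\ref{P:largest} the composition $\overline{X \otimes \blank} \hookrightarrow X \otimes \blank \to Y \otimes \blank$ lands in the largest subfunctor of $Y \otimes \blank$ vanishing on injectives, i.e., $\overline{Y \otimes \blank}$, producing a natural transformation $\varphi \ot \blank$; and (ii) $P \ot \blank = 0$ for every projective right module $P$, by Corollary~\ref{C:tensor-flat}.

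Once a projective presentation $P_1 \to P_0 \to A \to 0$ is fixed, the module $\Omega A := \Ker(P_0 \to A)$ is a definite object, so $\Omega A \ot \blank$ is automatically a functor in $B$ by the very definition of injective stabilization. Suppose now that $P'_{\bullet} \to A$ is another projective resolution with first syzygy $\Omega' A$, and let $f_\bullet : P_\bullet \to P'_\bullet$ be any chain map lifting $\id_A$. Since $f_0$ carries $\Ker(P_0 \to A)$ into $\Ker(P'_0 \to A)$, it restricts to a morphism $\varphi : \Omega A \to \Omega' A$, and (i) yields a natural transformation $\varphi \ot \blank : \Omega A \ot \blank \to \Omega' A \ot \blank$.

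I would then argue that $\varphi \ot \blank$ does not depend on the choice of $f_\bullet$. Any two lifts $f_\bullet, f'_\bullet$ are chain-homotopic, and the degree-zero identity $f_0 - f'_0 = d'_1 s_0$ restricts on $\Omega A$ to $\varphi - \varphi' = d'_1 \circ (s_0|_{\Omega A})$; since $d'_0 d'_1 = 0$, this displays $\varphi - \varphi'$ as a composition $\Omega A \to P'_1 \to \Omega' A$ that factors through the projective $P'_1$, and (ii) forces $(\varphi - \varphi') \ot \blank = 0$. To promote $\varphi \ot \blank$ to an isomorphism, pick any lift $g_\bullet : P'_\bullet \to P_\bullet$ of $\id_A$, producing $\psi : \Omega' A \to \Omega A$; the composite $g_\bullet f_\bullet$ and $\id_{P_\bullet}$ are both lifts of $\id_A$, so the independence just established gives $\psi \varphi \ot \blank = \id_{\Omega A \ot \blank}$, and symmetrically for the other composition. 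The main -- and only -- obstacle is the small diagram chase verifying that the restriction of the degree-zero chain homotopy factors through $P'_1$ via the inclusion $\Omega' A \hookrightarrow P'_0$; beyond that, everything is a formal consequence of functoriality of injective stabilization together with vanishing on projectives.
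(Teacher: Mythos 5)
Your proof is correct, and it uses the same two pillars the paper leans on: the vanishing of $P \ot \blank$ for projective $P$ (via flatness and Corollary~\ref{C:tensor-flat}) and the functoriality of the injective stabilization in the inert variable. The packaging differs slightly: the paper's stated route is ``vanishing on projectives plus Schanuel's lemma,'' i.e., the isomorphism $\Omega A \oplus P_0' \cong \Omega' A \oplus P_0$ together with additivity immediately gives $\Omega A \ot \blank \cong \Omega' A \ot \blank$, mirroring the argument given for Lemma~\ref{L:Ind-of-Sigma} on the cosyzygy side. You instead obtain the isomorphism from a pair of mutually inverse comparison maps, proving independence of the lift via the chain homotopy (the difference of two lifts restricts on $\Omega A$ to a map factoring through the projective $P_1'$, hence dies under $\ot$). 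Your version has the advantage of directly establishing the last clause of the lemma --- that the canonical isomorphism is induced by \emph{any} comparison map --- which Schanuel's lemma alone does not address and which the paper leaves to the reader by analogy with Lemma~\ref{L:Ind-of-Sigma}. One small point worth making explicit in your step (i): the image of $\overline{X \otimes \blank}$ under $\varphi \otimes \blank$ vanishes on injectives because it is a quotient of a functor that does, so Proposition~\ref{P:largest} indeed applies; alternatively this is just Remark~\ref{bifunctor}.
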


Similarly, specializing Lemma~\ref{L:Ind-of-Sigma} to  $F := A\, {\otimes} \blank$, we have

\begin{lemma}\label{L:Sigma}
The operation $A\, \ot\, \Sigma \blank$, where $\Sigma$ denotes a cosyzygy operation on left $\Lambda$-modules in injective resolutions, is determined uniquely up to a canonical isomorphism, which is induced by any comparison map between the injective resolutions of the blank argument. This operation is a functor. \qed
\end{lemma}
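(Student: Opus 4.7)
The plan is to specialize Lemma~\ref{L:Ind-of-Sigma} to the functor $F := A \otimes \blank$, whose injective stabilization $\overline{F}$ is by definition $A \ot \blank$. That lemma already supplies the object-level claim: for any left $\Lambda$-module $B$ and any choice of injective cosyzygy sequence $0 \to B \to I \to \Sigma B \to 0$, the group $A \ot \Sigma B$ is well-defined up to a canonical isomorphism induced by any extension of $\mathrm{id}_B$ to the chosen injective modules. So the only remaining task is to upgrade this object-level assignment to a functor on left $\Lambda$-modules.

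To that end, given $f : B \to B'$ and chosen cosyzygy sequences $0 \to B \to I \to \Sigma B \to 0$ and $0 \to B' \to I' \to \Sigma B' \to 0$, injectivity of $I'$ lets one extend $f$ to a morphism of short exact sequences whose right-hand component is a map $\Sigma f : \Sigma B \to \Sigma B'$. I would define the action of $A \ot \Sigma \blank$ on $f$ to be $(A \ot \blank)(\Sigma f)$.

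The one real issue to verify is that this definition is independent of the chosen extension of $f$. If $g_1, g_2 : I \to I'$ are two extensions, then $g_1 - g_2$ vanishes on $B$ and therefore factors through the projection $I \twoheadrightarrow \Sigma B$, producing some $h : \Sigma B \to I'$. The resulting difference of the two induced maps on cosyzygies is the composite $\Sigma B \xrightarrow{h} I' \twoheadrightarrow \Sigma B'$, which factors through the injective $I'$. By Lemma~\ref{L:vanish-inj}, the injective stabilization $A \ot \blank$ vanishes on injectives, and hence sends this difference to zero. Functoriality (compatibility with composition and identities) is then immediate: the identity on $B$ extends to the identity on $I$, and extensions compose. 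The uniqueness-up-to-canonical-isomorphism claim at the level of the entire functor is the object-level statement packaged with the naturality that follows from the same independence-of-extension argument, now applied to a comparison map between two different choices of cosyzygy sequences.

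The principal---indeed only---obstacle is this well-definedness of the action on morphisms, and it is dispatched by the factorization-through-an-injective observation combined with Lemma~\ref{L:vanish-inj}. Everything else reduces to routine bookkeeping that mirrors the argument already carried out in Lemma~\ref{L:Ind-of-Sigma}.
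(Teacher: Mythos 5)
Your proposal is correct and follows essentially the same route as the paper, which simply specializes Lemma~\ref{L:Ind-of-Sigma} to $F := A \otimes \blank$; the independence-of-extension argument you spell out (two extensions differ by a map factoring through an injective, killed by Lemma~\ref{L:vanish-inj}) is exactly the mechanism used in the paper's discussion preceding Lemma~\ref{L:Ind-of-Sigma}. You merely make explicit the action on morphisms that the paper leaves implicit.
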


Specializing~\eqref{SL to inj. stab} and Lemma~\ref{L:SL} to the case $F = A \otimes \blank$, we have 

\begin{proposition}\label{P:via-Tor}
The connecting homomorphism 
$\Tor_{1}(A, \Sigma B) \longrightarrow A \otimes B$ induces a functor isomorphism $S^{1} \Tor_{1}(A, \blank) \simeq A \, \ot \, \blank$, 
natural in $A$.\footnote{The reader who does not have prior experience in dealing with functors should notice that, when evaluating the left-hand side of the last isomorphism on the module $B$, one cannot replace the blank with $B$ -- the resulting expression $S^{1}\Tor_{1}(A, B)$ would be meaningless. Instead, it is useful to think of $S^{1}$ as a derivative, and follow the freshman calculus rule to compute the derivative first and then evaluate it at a specific value of the argument.} \qed
\end{proposition}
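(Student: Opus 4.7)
The plan is to recognize this proposition as essentially a direct specialization of Lemma~\ref{L:SL} to $F := A \otimes \blank$, with an added naturality statement in the inert variable $A$ that requires a little extra work. First I would note that $A \otimes \blank : \Lambda\textrm{-}\mathrm{Mod} \to \mathrm{Ab}$ is right-exact, so Lemma~\ref{L:SL} applies and produces, via the connecting homomorphism associated with an injective cosyzygy sequence $0 \to B \to I \to \Sigma B \to 0$, a functor isomorphism $S^{1}L_{1}(A \otimes \blank) \simeq \overline{A \otimes \blank}$. Identifying $L_{1}(A \otimes \blank)$ with $\Tor_{1}(A, \blank)$ and rewriting the injective stabilization in the harpoon notation yields the componentwise isomorphism $S^{1}\Tor_{1}(A, \blank)(B) \simeq A \ot B$, induced precisely by the connecting homomorphism $\Tor_{1}(A, \Sigma B) \to A \otimes B$ coming from the snake lemma applied to the diagram~\eqref{SL to inj. stab}.

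Naturality in $B$ is inherited directly from Lemma~\ref{L:SL}, so what remains is naturality in $A$, which is not covered by that lemma. For this I would fix an injective cosyzygy sequence $0 \to B \to I \to \Sigma B \to 0$ and, for a homomorphism $f : A \to A'$ of right $\Lambda$-modules, consider the morphism of long exact $\Tor$-sequences induced by $f \otimes \blank$. Both the top and bottom rows contain the connecting homomorphism into the respective tensor product, and the squares in the middle of the diagram involving $\Tor_{1}(f, \Sigma B)$, $\Tor_{1}(f, I)$, $f \otimes B$, and $f \otimes I$ commute by functoriality of the long exact sequence in the first variable. Passing to kernels and images along the vertical arrows $f \otimes \blank$ then shows that the induced square
\[
\xymatrix
	{
	S^{1}\Tor_{1}(A,\blank)(B) \ar[r]^-{\cong} \ar[d]
	& A \ot B \ar[d]
\\
	S^{1}\Tor_{1}(A',\blank)(B) \ar[r]^-{\cong}
	& A' \ot B
	}
\]
commutes, which is the desired naturality in $A$.

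The main obstacle I anticipate is bookkeeping in this second step: one has to be careful that $S^{1}\Tor_{1}(A,\blank)$ really is functorial in $A$ (so that the statement makes sense), and that the isomorphism produced by Lemma~\ref{L:SL}, which a priori was only stated as natural in $B$, is compatible with the $A$-variance built into both sides. Once the morphism of long exact sequences is set up and the relevant maps are identified with the canonical ones, the verification is a routine diagram chase, but phrasing it cleanly so that one does not accidentally invoke a noncanonical choice of $\Sigma B$ is the only delicate point; Lemma~\ref{L:Ind-of-Sigma} guarantees the independence and so closes the argument.
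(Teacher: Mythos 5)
Your proposal is correct and follows essentially the same route as the paper, which states the proposition as an immediate specialization of the exact sequence~\eqref{SL to inj. stab} and Lemma~\ref{L:SL} to $F = A \otimes \blank$ and offers no further argument. Your additional verification of naturality in the inert variable $A$ via the morphism of long exact $\Tor$-sequences induced by $f : A \to A'$ fills in a detail the paper leaves implicit (it is consistent with the paper's later Remark~\ref{bifunctor} that the injectively stabilized tensor product is a bifunctor), and is sound.
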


Specializing Proposition~\ref{P:long-exact} to the case 
$F : = A \otimes \blank$, where $A$ is a right $\Lambda$-module, and a short exact sequence $0 \to B' \to B \to B'' \to 0$ of left  modules, we have a doubly-infinite exact sequence
\begin{equation}\label{long}
\begin{gathered}
\xymatrix@R10pt
	{
	\ldots \ar[r]
	& \Tor_{i}(A,B') \ar[r]
	& \Tor_{i}(A,B) \ar[r]
	& \Tor_{i}(A,B'') 
	&
\\
	& \vdots
	& \vdots
	& \vdots
	&	
\\
	\ldots \ar[r]
	& \Tor_{1}(A,B') \ar[r]
	& \Tor_{1}(A,B) \ar[r]
	& \Tor_{1}(A,B'')
	&
\\
	\ar[r]
	& A \ot B' \ar[r]
	& A \ot B \ar[r]
	& A \ot B''
	&
\\
	\ar[r]
	& A \ot \Sigma B' \ar[r]
	& A \ot \Sigma B \ar[r]
	& A \ot \Sigma B''
	&
\\
	& \vdots
	& \vdots
	& \vdots
	&
\\
	\ldots \ar[r]
	& A \ot \Sigma^{j}B' \ar[r]
	& A \ot \Sigma^{j}B \ar[r]
	& A \ot \Sigma^{j}B'' \ar[r]
	& \ldots
	}
\end{gathered}
\end{equation}
\bigskip

Finally, we want to evaluate the functor $A \ot \blank$ on the bimodule $\Lambda$. Thus $A \ot \Lambda$ is a submodule of $A \otimes \Lambda \cong A$. If $A$ is finitely presented, this module can be determined explicitly. To this end, we specialize Proposition~\ref{P:p-resolution-of-inj-stab} to the $\Lambda$-component. Let $P_{1} \overset{\partial}\lra P_{0} \lra A \lra 0$ be a finite presentation of $A$. This yields a finite presentation of the functor $A \otimes \blank$:
\[
\xymatrix
	{
	P_{1} \otimes \blank \ar[r]^{(\partial \otimes \blank)} \ar@{=}[d]
	& P_{0} \otimes \blank \ar[r] \ar@{=}[d]
	& A \otimes \blank \ar[r] \ar@{=}[d]
	& 0
\\
	(P_{1}^{\ast}, \blank) \ar[r]^{(\partial^{\ast}, \blank)}
	& (P_{0}^{\ast}, \blank) \ar[r]
	& A \otimes \blank \ar[r]
	& 0
	}
\] 
This, in turn, yields the corresponding defect sequence
\begin{equation}\label{defect-tensor}
\begin{gathered}
\xymatrix
	{
	0 \ar[r]
	& A^{\ast} \ar[r]^{l}
	& P_{0}^{\ast} \ar[rr]^{\partial^{\ast}} \ar@{->>}[rd]^{p}
	&
	& P_{1}^{\ast} \ar[r]
	& \Tr A \ar[r]
	& 0
\\
	& 
	&
	& \Omega \Tr A  \ar@{>->}[ru]^{i}
	}
\end{gathered}
\end{equation}
Diagram~\eqref{4-term} now becomes
\begin{equation*}\label{4-term-tensor}
\begin{gathered}
  \xymatrix
	{
	&
	&
	& 0 \ar[d]
	& 0 \ar[d]
\\
	0 \ar[r]
	& (\Tr A, \blank) \ar[r] \ar@{=}[d]
	& (P_{1}^{\ast},\blank) \ar[r]^{(i,-)} \ar@{=}[d]
	& (\Omega \Tr A,\blank) \ar[r] \ar[d]^{(p,-)}
	& \Ext^{1}(\Tr A, \blank) \ar[d]^{\nu} \ar[r]
	& 0
\\
	0 \ar[r]
	& (\Tr A, \blank) \ar[r]
	& (P_{1}^{\ast},\blank) \ar[r]^{(\partial^{\ast},\blank)} 
	& (P_{0}^{\ast},\blank) \ar[r] \ar[d]^{(l,-)}
	& A \otimes \blank \ar[r] \ar[d]^{\mu}
	& 0
\\
	&
	&
	& (A^{\ast}, \blank) \ar@{=}[r] \ar[d]
	& (A^{\ast}, \blank) \ar[d]
\\
	&
	&
	& \Ext^{1}(\Omega \Tr A, \blank) \ar@{=}[r] \ar[d]
	& \Ext^{1}(\Omega \Tr A, \blank) \ar[d]
\\
	&
	&
	& 0
	& 0
	}
\end{gathered}
\end{equation*}
As a consequence, we have
\begin{proposition}[\cite{AB}, Corollary (2.9)]\label{P:4-term-tensor}
 If $A$ is a finitely presented right $\Lambda$-module, then 
\begin{equation}\label{D:A ot}
 A\, \ot \blank \simeq \Ext^{1}(\Tr A, \blank),
\end{equation}
with the top row of the preceding diagram giving a projective resolution of
$A\, \ot \blank$. Specializing the rightmost column to the 
$\Lambda$-component, we have $A \ot \Lambda = \Ker \mu_{\Lambda}$, where 
$\mu_{\Lambda} = e_{A} : A \to A^{\ast\ast}$ is the canonical evaluation map. \qed
\end{proposition}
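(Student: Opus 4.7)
The plan is to apply Proposition~\ref{P:p-resolution-of-inj-stab} to the functor $A \otimes \blank$, using the defect sequence \eqref{defect-tensor} already on the table, and then match the resulting projective resolution with a standard one for $\Ext^{1}(\Tr A, \blank)$.

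Since each $P_{i}$ is finitely generated projective, the canonical natural isomorphism $P_{i} \otimes \blank \cong (P_{i}^{\ast}, \blank)$ converts the chosen finite presentation $P_{1} \overset{\partial}\lra P_{0} \lra A \lra 0$ into the finite presentation $(P_{1}^{\ast}, \blank) \overset{(\partial^{\ast}, \blank)}\lra (P_{0}^{\ast}, \blank) \lra A \otimes \blank \lra 0$ of the functor. Its defect data are precisely those displayed in \eqref{defect-tensor}: the defect is $A^{\ast}$, $\Coker \partial^{\ast} = \Tr A$, and $\partial^{\ast}$ factors through $\Omega \Tr A$. Part~(3) of Proposition~\ref{P:p-resolution-of-inj-stab} then produces the projective resolution of $\overline{A \otimes \blank} = A \ot \blank$ displayed as the top row of the diagram. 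To see that this cokernel is $\Ext^{1}(\Tr A, \blank)$, note that $P_{0}^{\ast}$ and $P_{1}^{\ast}$ are finitely generated projective left $\Lambda$-modules, so $0 \to \Omega \Tr A \to P_{1}^{\ast} \to \Tr A \to 0$ is short exact with projective middle term. Applying $\Hom(\blank, B)$ and using the vanishing of $\Ext^{1}(P_{1}^{\ast}, B)$ realizes $\Ext^{1}(\Tr A, \blank)$ as the cokernel of $(P_{1}^{\ast}, \blank) \overset{(i, \blank)}\lra (\Omega \Tr A, \blank)$, which is exactly how $A \ot \blank$ has just been presented.

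For the second assertion, I evaluate the rightmost column of the large diagram at the bimodule $\Lambda$ and use the natural identifications $A \otimes \Lambda \cong A$ and $(A^{\ast}, \Lambda) \cong A^{\ast\ast}$. The exactness of that column then reduces the claim $A \ot \Lambda = \Ker \mu_{\Lambda}$ to verifying that the resulting map $\mu_{\Lambda} : A \to A^{\ast\ast}$ coincides with the canonical evaluation $e_{A}$. This is the only step that requires actual computation, and it is the main obstacle. Chasing an element $a = \pi(p_{0}) \in A$ through the construction of $\mu$, which is induced by $(l, \blank)$ with $l : A^{\ast} \hookrightarrow P_{0}^{\ast}$ the defect inclusion, shows that $\mu_{\Lambda}(a)$ is the functional sending $\varphi \in A^{\ast}$ to $(\varphi \circ \pi)(p_{0}) = \varphi(a)$, which is precisely $e_{A}(a)$. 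Once this identification is in place, the conclusion $A \ot \Lambda = \Ker e_{A}$ is immediate from exactness of the column.
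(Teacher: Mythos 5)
Your proposal is correct and follows the same route as the paper: specialize Proposition~\ref{P:p-resolution-of-inj-stab} to the presentation $(P_{1}^{\ast},\blank)\to(P_{0}^{\ast},\blank)\to A\otimes\blank\to 0$ obtained from $P_{i}\otimes\blank\cong(P_{i}^{\ast},\blank)$, read off the defect sequence~\eqref{defect-tensor}, and evaluate the rightmost column at $\Lambda$. The only difference is that you explicitly verify two points the paper leaves implicit in the diagram, namely that $\Coker(i,\blank)\cong\Ext^{1}(\Tr A,\blank)$ and that $\mu_{\Lambda}=e_{A}$; both checks are correct.
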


\begin{remark}
 The reader may want to compare \eqref{D:A ot} with the formula 
 \[
 (\underline{A, \blank}) \simeq \Tor_{1}(\Tr A, \blank)
 \]
 expressing the projective stabilization of the covariant Hom functor in terms of Tor when $A$ is finitely presented.
\end{remark}

%The above diagram now allows to compute the right-derived functors of the univariate tensor product with a finitely presented module.
%
%\begin{proposition}
% Let $A$ be a finitely presented right $\Lambda$-module. Then
% \[
% R^{n}(A \otimes \blank) \simeq \Ext^{n}(A^{\ast}, \blank)
% \]
% for each $n \geq 0$.
%\end{proposition}
%
%\begin{proof}
% It is well-known that $R^{n}F \simeq R^{n}(R^{0}F)$ any additive functor $F$. As we mentioned on page~\pageref{R0}, if $F$ is finitely presented, then $R^{0}F$ is isomorphic to the covariant Hom functor whose contravariant argument is the defect of $F$. In the case when $F$ is the tensor product with a finitely presented module, the above diagram shows that the defect is the dual of the module. The claim now follows. 
%\end{proof}
%
%\begin{remark}
% The zeroth left-derived functor of the covariant Hom functor with a finitely presented contravariant argument $C$ was determined in (\cite[Proposition 7.1]{A66}):
%\[
%L_{0}(C, \blank) \simeq C^{\ast} \otimes \blank
%\]
%(Here $C$ is a left $\Lambda$-module). It now follows that
% \[
% L_{n} (C, \blank) \simeq \Tor_{n}(C^{\ast},\blank)
% \]
%\end{remark}

\begin{remark}\label{bifunctor}
For future use, we make a simple observation that the injective stabilization of the tensor product is a bifunctor. This follows from the fact that the tensor product is a bifunctor and a standard diagram chase.
\end{remark}

Finally, we want to examine the injective stabilization of the tensor product when $\Lambda$ is an algebra over a commutative ring $R$. Choose an 
injective $R$-module $\mathbf{J}$ and let $D_{\mathbf{J}} : = \Hom_{R}(\blank, \mathbf{J})$. Define the functor Hom modulo injectives by setting 
\[
\overline\Hom(X,Y) := \Hom(X,Y)/I(X,Y)
\]
where $I(X,Y)$ denotes the subgroup of all homomorphisms factoring through injective modules. We now have

\begin{proposition}\label{P:AR-formula}
 Let $A$ be a right $\Lambda$-module and $B$ a left $\Lambda$-module. There is an isomorphism 
 \[
 D_{\mathbf{J}}(A\ot B) \simeq \overline{\Hom} (B, D_{\mathbf{J}}(A)), 
 \]
functorial in $A$ and $B$.
\end{proposition}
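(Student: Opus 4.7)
The plan is to reduce the statement to a standard calculation using the defining property of $A \ot B$ (from Lemma~\ref{L:altinjdef}) together with the tensor-hom adjunction over the commutative ring $R$. Fix an embedding $\iota \colon B \hookrightarrow I$ into an injective left $\Lambda$-module $I$. By Lemma~\ref{L:altinjdef}, $A \ot B$ is the kernel of $A \otimes \iota$, so we have a left exact sequence
\[
0 \lra A \ot B \lra A \otimes B \overset{A \otimes \iota}{\lra} A \otimes I.
\]

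Next I would apply $D_{\mathbf{J}} = \Hom_R(\blank,\mathbf{J})$, which is exact since $\mathbf{J}$ is $R$-injective, to obtain
\[
D_{\mathbf{J}}(A \otimes I) \lra D_{\mathbf{J}}(A \otimes B) \lra D_{\mathbf{J}}(A \ot B) \lra 0,
\]
and then use the standard tensor-hom adjunction $\Hom_R(A \otimes_\Lambda X,\mathbf{J}) \cong \Hom_\Lambda(X, \Hom_R(A,\mathbf{J})) = \Hom_\Lambda(X, D_{\mathbf{J}}(A))$ (valid because $A$ is a right $\Lambda$-module and so $D_{\mathbf{J}}(A)$ inherits a left $\Lambda$-structure). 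This rewrites the sequence as
\[
\Hom_\Lambda(I, D_{\mathbf{J}}(A)) \overset{\iota^{\ast}}{\lra} \Hom_\Lambda(B, D_{\mathbf{J}}(A)) \lra D_{\mathbf{J}}(A \ot B) \lra 0.
\]

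The key step is then to identify the image of $\iota^{\ast}$ with the subgroup $I(B, D_{\mathbf{J}}(A))$ of maps factoring through an injective. One inclusion is immediate, since $I$ itself is injective: any map of the form $h \circ \iota$ factors through an injective. For the reverse inclusion, if $f \colon B \to D_{\mathbf{J}}(A)$ factors as $B \overset{g}{\to} J \overset{h}{\to} D_{\mathbf{J}}(A)$ with $J$ injective, then since $\iota$ is monic and $J$ is injective, $g$ extends to some $\tilde{g} \colon I \to J$, and $f = (h \tilde{g}) \circ \iota$ lies in the image of $\iota^{\ast}$. Thus the image of $\iota^{\ast}$ is precisely $I(B, D_{\mathbf{J}}(A))$, and passing to the cokernel gives the desired isomorphism $D_{\mathbf{J}}(A \ot B) \simeq \overline{\Hom}(B, D_{\mathbf{J}}(A))$.

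Finally, naturality in $A$ and $B$ needs to be checked. Naturality in $A$ is clear because every step is natural in $A$ (the tensor product, $D_{\mathbf{J}}$, and the adjunction isomorphism). Naturality in $B$ is the only mildly delicate point: a priori the construction uses an embedding $\iota$, but by Lemma~\ref{L:canonical} the group $A \ot B$ is canonical, and the right-hand side $\overline{\Hom}(B, D_{\mathbf{J}}(A))$ is manifestly functorial in $B$. Given a map $B \to B'$, one lifts it to a map of chosen injective embeddings and chases around the resulting diagram; the factoring-through-injective condition is preserved, so the induced maps on cokernels agree. The main obstacle is really just this bookkeeping of naturality together with independence from the choice of $\iota$; the core isomorphism itself is a one-line consequence of the two exactness inputs and the adjunction.
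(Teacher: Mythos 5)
Your proposal is correct and follows essentially the same route as the paper: embed $B$ in an injective, apply the exact duality $D_{\mathbf{J}}$ to the defining left-exact sequence of $A\ot B$, rewrite via the tensor--Hom adjunction, and identify the cokernel with $\overline{\Hom}(B,D_{\mathbf{J}}(A))$. The only difference is that you explicitly verify the step the paper leaves implicit (that the image of $\iota^{\ast}$ is exactly the subgroup of maps factoring through injectives) and you address naturality, both of which are welcome but not a change of method.
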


\begin{proof}
 Let $0 \to B \to I$ be the injective envelope of $B$. Applying the exact 
 functor~$D$ to the defining sequence 
 \[
 0 \lrt A\ot B \lrt A\otimes B \lrt A\otimes I
 \]
 we have the exact sequence 
 \[
 (A\otimes I, \mathbf{J}) \lrt (A\otimes B, \mathbf{J}) \lrt
 (A\ot B, \mathbf{J}) \lrt 0
 \]
By the adjoint property of the tensor product and Hom, this rewrites as 
\[
(I, D_{\mathbf{J}}(A))  \lrt (B, D_{\mathbf{J}}(A)) \lrt  \overline{\Hom}(B, D_{\mathbf{J}}(A)) \lrt 0
\] 
Comparing the rightmost terms, we have the desired result.
\end{proof}

\begin{remark}
 Suppose now that the module $A$ from Proposition~\ref{P:AR-formula} is finitely presented. Then, up to projective equivalence, $A$ can be written as
 $\Tr A'$ for some left $\Lambda$-module $A'$. Since tensoring with a projective 
 is an exact functor, the value of the injective stabilization $(\Tr A') \ot B$ is well-defined. Also, since $D_{\mathbf{J}}$ converts projectives into injectives, the module $D_{\mathbf{J}}(\Tr A')$ is defined uniquely modulo injectives. Together with Proposition~\ref{P:4-term-tensor}, this yields a well-defined natural isomorphism 
 \[
 D_{\mathbf{J}}\Ext^{1}(A',B) \simeq \overline{\Hom} (B, D_{\mathbf{J}}\Tr A')
 \]
which is nothing but the Auslander-Reiten formula~\cite{AR-III}. Notice however that the Auslander-Reiten formula requires that the contravariant argument of the Ext functor be finitely presented. Thus Proposition~\ref{P:AR-formula} can be viewed as an extension of the Auslander-Reiten formula to arbitrary modules. 
\end{remark}

\newcommand{\fpr}{\mathrm{fp}}
\renewcommand{\mod}{\mathrm{mod}}

\section{The small functor category and the colimit extension}\label{S:small-functor-cat}

Proposition~\ref{P:4-term-tensor} seems to suggest that restricting additive functors to finitely presented modules may provide additional insights. In this section, we shall take a closer look at this phenomenon.

In a slight change of notation, the full subcategory of right modules over $\Lambda$ determined by \texttt{finitely presented} modules will be denoted by $\mod(\Lambda)$. The category of all additive functors $F : \mod(\Lambda)\to \ab$ together with natural transformations between them will be denoted by $(\mod(\Lambda),\ab)$. This is an abelian category, a sequence of natural transformations being exact if and only if it is exact at each component. Gruson and Jensen establish in~\cite{GJ} that  $(\mod(\Lambda),\ab)$ is a Grothendieck category; in particular, it has enough injectives. They also show that the injectives are precisely the functors of the form $\blank\otimes M$, where $M$ runs through pure injective left modules. This yields

\begin{proposition}\label{P:f-colim-coprod}
 Any additive functor $F:\mod(\Lambda)\to \ab$ commutes with filtered colimits and coproducts.
\end{proposition}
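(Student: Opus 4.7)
The plan is to exploit the Gruson--Jensen description of injectives in $(\mod(\Lambda),\ab)$ cited immediately before the statement. Since that functor category is Grothendieck, $F$ admits an injective copresentation
\[
0 \lra F \lra T^{0} \lra T^{1},
\]
with each $T^{i}$ of the form $\blank \otimes_{\Lambda} M^{i}$ for a pure injective left $\Lambda$-module $M^{i}$. This reduces the question to the classical fact that $\blank \otimes_{\Lambda} M$ preserves filtered colimits and arbitrary coproducts for any module $M$ whatsoever --- a purely formal property of the tensor product in its left argument that has nothing to do with the pure-injectivity of $M$.

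For a filtered diagram (respectively, an arbitrary family, in the coproduct case) $(N_{j})_{j \in J}$ in $\mod(\Lambda)$ with colimit $N$, I would then compare
\[
\xymatrix@C14pt{
0 \ar[r]
& \varinjlim F(N_{j}) \ar[r] \ar[d]
& \varinjlim T^{0}(N_{j}) \ar[r] \ar[d]^{\cong}
& \varinjlim T^{1}(N_{j}) \ar[d]^{\cong} \\
0 \ar[r]
& F(N) \ar[r]
& T^{0}(N) \ar[r]
& T^{1}(N).
}
\]
The top row is exact because filtered colimits and coproducts are exact in $\ab$; the bottom row is exact by the copresentation; and the two right verticals are isomorphisms by the previous paragraph. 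The leftmost vertical is then forced to be an isomorphism by the uniqueness of kernels (equivalently, by the four lemma applied to left-exact sequences), and this handles both filtered colimits and coproducts in a single stroke.

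The main subtlety --- and essentially the only one --- is that $N$ need not lie in $\mod(\Lambda)$: an infinite coproduct of finitely presented modules is typically not finitely presented, and the same applies to most filtered colimits. This is precisely why the statement must be read in conjunction with the colimit-extension apparatus of this section: one regards $F$ as the functor $\Ker(T^{0} \to T^{1})$ on the whole of $\Mod(\Lambda)$, which is the canonical filtered-colimit-preserving extension alluded to in the section title, and the diagram above then makes sense whether or not $N$ is finitely presented. With that convention in place, Gruson--Jensen is doing all of the real work; the rest is bookkeeping.
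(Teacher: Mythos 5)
Your argument is correct and is essentially the paper's own proof: both take the Gruson--Jensen injective copresentation $0 \to F \to \blank\otimes M \to \blank\otimes N$ and conclude from the exactness of filtered colimits and coproducts in $\ab$ together with the fact that the tensor product in its first variable commutes with them. Your closing observation that the colimit object typically leaves $\mod(\Lambda)$, so that the statement is really about the kernel functor on all of $\Mod(\Lambda)$, is a genuine subtlety that the paper's one-line proof leaves implicit.
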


\begin{proof}
Taking an injective copresentation
\begin{equation}\label{Eq:inj-copres}
 0 \lrt F \lrt \blank \otimes M \overset{\alpha}\lrt \blank\otimes N 
\end{equation}
of $F$, we have the desired result because the colimit is an exact functor and the tensor product functor commutes with filtered colimits and coproducts.
\end{proof}

This leads to  an important consequence: any additive functor 
$F:\mod(\Lambda)\to \ab$  can be extended to a unique functor $\overset{\to}{F}:\Mod(\Lambda)\to \ab$ on the entire module category, which commutes with filtered colimits.  This can be done as follows. First, assume that $F = \blank \otimes M$ for some left module $M$. If $A \in \Mod(\Lambda)$ is an arbitrary right module, set $\overset{\to}{F}(A) := A \otimes M$. If $f : A \to B$ is a morphism in $\Mod(\Lambda)$, set $\overset{\to}{F}(f) := f \otimes M$. Clearly,
$\overset{\to}F$ is a functor. The uniqueness of~$\overset{\to}F$ follows from the two well-known facts: any module can be represented as a filtered colimit of finitely presented modules and the tensor product functor commutes with filtered colimits. This construction shows that if $F$ is the tensor product functor, then
$\overset{\to}{F} (A) \simeq \varinjlim F(A_{i})$, where $A \simeq \varinjlim A_{i}$ is a representation of the right $\Lambda$-module $A$ as a filtered colimit of finitely presented modules. Before passing to the general case, we also want to define extensions of natural transformations between tensor product functors with pure injective arguments. Let $\alpha : \blank \otimes M \lrt \blank \otimes N$ be such a transformation with $M$ and $N$ pure injective. By~\cite[Proposition 1.3]{GJ},
\[
\Ext^{n}(\blank \otimes M, \blank \otimes N) \simeq \mathrm{Pext}^{n}_{\Lambda}(M,N)
\] 
where $\mathrm{Pext}^{n}_{\Lambda}(M,N)$ is the $n$th homology group of the complex $\Hom_{\Lambda}(M, Q(N))$, with $Q(N)$ being a pure injective resolution of $N$. Specializing to the case $n=0$ and using the fact that $N$ is pure injective by assumption, we have
\[
\Hom (\blank \otimes M, \blank \otimes N) \simeq \Hom_{\Lambda}(M,N)
\]
In view of this, $\alpha = \blank \otimes f$ for some $f : M \to N$. We can now define the desired extension 
\[
\overset{\to}\alpha : \blank \overset{\to}\otimes M \lrt \blank \overset{\to}\otimes N
\]
of $\alpha$ by setting $\overset{\to}\alpha := \blank \otimes f$. Since the tensor product is a bifunctor, $\overset{\to}\alpha$ is clearly a natural transformation. 
We remark that, once the injective copresentation~\eqref{Eq:inj-copres} has been chosen, the extension of $\alpha$ becomes unique up to isomorphism. If $A \simeq \varinjlim A_{i}$ is an arbitrary right $\Lambda$-module represented as a filtered colimit of finitely presented modules, then for each $A_{i}$ we have a homomorphism $\alpha_{A_{i}} : A_{i} \otimes M \to A_{i} \otimes N$ and, passing to the colimits, we have $\overset{\to}\alpha_{A} = \varinjlim \alpha_{A_{i}}$.

Now let $F : \mod(\Lambda)\to \ab$ be an arbitrary additive functor, with injective copresentation~\eqref{Eq:inj-copres}. Define $\overset{\to}F$ by the exact sequence
\[
0 \lrt \overset{\to}{F} \lrt \blank \overset{\to}\otimes M \overset{\overset{\to}\alpha}\lrt \blank \overset{\to}\otimes N
\]
Notice first that the foregoing argument for the tensor product shows that the choice of the injective copresentation of $F$ determines $\overset{\to}{F}$ uniquely up to isomorphism. Thus, to show that $\overset{\to}{F}$ is unique, it remains to show that any other choice of the injective copresentation of~$F$ yields a functor isomorphic to the same $\overset{\to}{F}$. To prove this, we recall the basic fact that any two injective resolutions are homotopy equivalent. Any homotopy, being a natural transformation between tensor products with pure injective arguments arises from a homomorphism between those arguments. This implies that the original homotopy extends to a homotopy (just evaluate on $\Lambda$). Since 
$\overset{\to}{F}$ is defined as the zeroth homology group, we have the desired uniqueness.

\begin{definition}
 The functor $\overset{\to}{F}$ will be called the colimit extension of $F$.
\end{definition}

The following known result is now obvious.

\begin{theorem}\label{T:colim-extension}
The colimit extension $\overset{\to}{F}$ of $F$ is the unique, up to isomorphism, functor on $\Mod(\Lambda)$ that commutes with filtered colimits and agrees with $F$ when restricted to $\mod(\Lambda)$. Moreover, colimit extension is an equivalence between the small functor category $(\mod(\Lambda),\ab)$ and the category of all functors $F:\Mod(\Lambda)\to \ab$ which commute with filtered colimits, the quasi-inverse provided by the restriction to finitely presented modules. \qed
\end{theorem}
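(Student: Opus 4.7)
The plan is to verify, in order, that $\overset{\to}{F}$ restricts to $F$ on $\mod(\Lambda)$, that it preserves filtered colimits, that any extension with these two properties is unique up to isomorphism, and finally that the extension and restriction constructions are mutually quasi-inverse equivalences.

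First, for the restriction: when $A$ is finitely presented the construction gives $A \overset{\to}\otimes M = A\otimes M$ and $A \overset{\to}\otimes N = A\otimes N$, so evaluating the defining left exact sequence of $\overset{\to}{F}$ at $A$ returns the kernel of $\alpha_A : A\otimes M \to A\otimes N$, which is exactly $F(A)$ by the chosen injective copresentation. Naturality in $A$ is immediate from the same identification, and independence of the copresentation was already handled in the excerpt via the homotopy-equivalence argument.

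Second, $\overset{\to}{F}$ preserves filtered colimits because it is the kernel of a natural transformation $\overset{\to}\alpha$ between two tensor functors each of which commutes with filtered colimits: for a left module $M$, the functor $A\mapsto A\otimes M$ on $\Mod(\Lambda)$ is a standard left adjoint on the first variable and so commutes with all colimits, in particular filtered ones. Filtered colimits are exact in $\ab$ and therefore commute with the formation of kernels, so $\overset{\to}{F}$ inherits the property. Third, uniqueness is then forced: if $G:\Mod(\Lambda)\to\ab$ commutes with filtered colimits and agrees with $F$ on $\mod(\Lambda)$, then for any $A\simeq\varinjlim A_i$ with $A_i$ finitely presented,
\[
G(A) \simeq \varinjlim G(A_i) \simeq \varinjlim F(A_i) \simeq \varinjlim \overset{\to}{F}(A_i) \simeq \overset{\to}{F}(A),
\]
and these isomorphisms assemble into a natural isomorphism $G\simeq \overset{\to}{F}$ because they come from the structural colimit-preservation isomorphisms of $G$ and $\overset{\to}{F}$.

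For the equivalence, I would promote $F\mapsto\overset{\to}{F}$ to a functor. Given a natural transformation $\eta:F\to F'$, the injectivity of $\blank\otimes M'$ and $\blank\otimes N'$ gives a lift of $\eta$ to a map of the chosen injective copresentations; the identification $\mathrm{Hom}(\blank\otimes M,\blank\otimes N)\simeq \mathrm{Hom}_{\Lambda}(M,N)$ from \cite{GJ} recalled in the excerpt identifies each component of this lift with a homomorphism of pure injective modules, which then extends unambiguously to a natural transformation of the extended tensor functors. Taking kernels yields $\overset{\to}\eta$, and the same $\mathrm{Hom}_\Lambda$ identification applied to the homotopy controls independence of the chosen lift, so $\overset{\to}\eta$ is well defined. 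Restriction composed with extension is the identity on $(\mod(\Lambda),\ab)$ by the first step, while extension followed by restriction is naturally isomorphic to the identity by the third step. The main obstacle I anticipate is precisely this morphism-level well-definedness -- verifying that the choices of injective copresentations and of lifts do not affect $\overset{\to}\eta$ and that composition is preserved -- which is where the pure-injective resolution calculus of \cite{GJ} does the essential work.
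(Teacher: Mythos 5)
Your proposal is correct and follows essentially the same route as the paper, which treats the theorem as an immediate consequence of the preceding construction: the Gruson--Jensen injective copresentation by tensor functors with pure injective arguments, the identification $\Hom(\blank\otimes M,\blank\otimes N)\simeq\Hom_{\Lambda}(M,N)$ to extend transformations, exactness of filtered colimits in $\ab$, and uniqueness via writing every module as a filtered colimit of finitely presented ones. The only point worth a word more than you give it is the naturality of the isomorphism $G\simeq\overset{\to}{F}$ in the uniqueness step, which is cleanest if one uses the canonical presentation of $A$ as the filtered colimit over the comma category of finitely presented modules mapping to $A$.
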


Thus, we have another description of the small functor category: any additive functor $F : \mod(\Lambda) \to \ab$ can be identified with its extension $\overset{\to}{F}:\Mod(\Lambda)\to \ab$, and any functor $F:\Mod(\Lambda)\to \ab$ that commutes with filtered colimits can be identified, via its restriction, with the corresponding functor in $(\mod(\Lambda),\ab)$. Because both points of view have advantages, we will freely move between the two. 

\begin{example}
 Any functor on the large module category commuting with filtered colimits is the colimit extension of its restriction. In particular,  the colimit extension of the tensor product $\blank \otimes B$ is the same tensor product applied to all right modules.
\end{example}

%Because domain extension is an equivalence of categories and extension of tensor products yields tensor products on the entire category $\Mod(\Lambda)$, we have

\begin{corollary}\label{C:comm-coprod}
Any functor $F:\Mod(\Lambda)\to \ab$ that commutes with filtered colimits also commutes with coproducts.\footnote{This could also be seen directly because any direct sum can also be represented as a filtered direct sum.}
\end{corollary}

\begin{proof}If $F:\Mod(\Lambda)\to \ab$ commutes with filtered colimits, then it is the extension of its restriction.  Since its restriction $F\in (\mod(\Lambda),\ab)$ has injective copresentation 
\[
0\to F\to \blank\otimes M\to \blank\otimes N
\]
and since extending this exact sequence results in an exact sequence of functors on $\Mod(\Lambda)$ $$0\to F\to \blank\otimes M\to \blank \otimes N$$ the functor $F$ can be viewed as a kernel of a natural transformation between tensor functors. Since the tensor product functor commutes with coproducts and coproducts are exact, the result follows.
\end{proof}

The next proposition provides a nontrivial example of colimit extension.

\begin{proposition}Let $B$ be a left module and $A=\underset{\longrightarrow}{\lim}\ A_i$ a right module expressed as a filtered colimit of finitely presented modules.   Then $$(\blank\ot B)(A) \simeq \underset{\longrightarrow}{\lim}\Ext^1(\Tr(A_i),B)$$\end{proposition}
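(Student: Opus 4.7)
The plan is to establish the isomorphism by two independent assertions: (i) the restriction of the functor $\blank\ot B$ to $\mod(\Lambda)$ is naturally isomorphic to $\Ext^1(\Tr(\blank),B)$; and (ii) the functor $A\mapsto A\ot B$ on $\Mod(\Lambda)$ commutes with filtered colimits. Combining these two gives the desired formula, since $(\blank\ot B)(A)=(\blank\ot B)(\varinjlim A_i)\simeq\varinjlim(A_i\ot B)\simeq\varinjlim\Ext^1(\Tr(A_i),B)$.

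For (i), I invoke Proposition~\ref{P:4-term-tensor}: for each finitely presented right $\Lambda$-module $A_i$, there is a natural isomorphism $A_i\ot B\simeq \Ext^1(\Tr(A_i),B)$. The mild subtlety is that $\Tr$ is only well-defined on $\mod(\Lambda)$ modulo projective summands, but this causes no trouble because $\Ext^1(-,B)$ vanishes on projective contravariant arguments, so $\Ext^1(\Tr(\blank),B)$ is a bona fide additive functor on $\mod(\Lambda)$. Naturality in $A_i$ is a consequence of Remark~\ref{bifunctor} together with the functorial character of the projective resolution used to build $\Tr(A_i)$ inside the diagram preceding Proposition~\ref{P:4-term-tensor}.

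For (ii), fix an injective container $\iota\colon B\hookrightarrow I$. By Lemma~\ref{L:altinjdef},
\[
A\ot B \;=\; \Ker\bigl(A\otimes B \xrightarrow{A\otimes\iota} A\otimes I\bigr).
\]
Write $A=\varinjlim A_i$. Since the tensor product commutes with filtered colimits in its left argument, the map $A\otimes\iota$ is naturally isomorphic to $\varinjlim(A_i\otimes\iota)$, and because filtered colimits are exact in $\ab$ they commute with kernels. Thus
\[
A\ot B \;\simeq\; \Ker\bigl(\varinjlim(A_i\otimes B)\to \varinjlim(A_i\otimes I)\bigr) \;\simeq\; \varinjlim\Ker(A_i\otimes\iota) \;=\; \varinjlim(A_i\ot B).
\]
This is the preservation of filtered colimits for $\blank\ot B$, and the naturality of the passage through $\iota$ ensures independence of the chosen container.

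The only point that might look delicate is that Proposition~\ref{P:4-term-tensor} as stated is a componentwise isomorphism, whereas we need it to be functorial in $A_i$ so that the colimit on the right is well-formed. This is the main conceptual step, but it is handled automatically by viewing the top row of the diagram preceding Proposition~\ref{P:4-term-tensor} as a functorial projective resolution of $A\ot\blank$ whose defect is $\Tr(A)$; varying $A_i$ yields a directed system of such resolutions compatible with the structure maps of $\varinjlim A_i$. Once this is in place, (i) and (ii) together give the claim immediately, with no further computation.
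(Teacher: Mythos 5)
Your proposal is correct and follows essentially the same route as the paper: identify $A_i \ot B$ with $\Ext^1(\Tr (A_i), B)$ via Proposition~\ref{P:4-term-tensor} and pass to the filtered colimit. The only difference is that the paper simply asserts that $\blank\ot B$ commutes with filtered colimits (leaning on the colimit-extension framework of that section), whereas you justify it directly from Lemma~\ref{L:altinjdef} and the exactness of filtered colimits in $\ab$ --- a useful bit of added detail rather than a different method.
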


\begin{proof}Since $\blank\ot B$ commutes with filtered colimits, we have
%This follows from \cite[Corollary~(2.9)]{AB},  and the fact that the .  More precisely, 
\begin{eqnarray*}
(\blank\ot B)(A)
& \simeq	& (\blank\ot B)(\underset{\longrightarrow}{\lim}\ A_i) \\
& \simeq	&\underset{\longrightarrow}{\lim}\ (\blank\ot B)(A_i)\\
& \simeq	&\underset{\longrightarrow}{\lim}\ (A_{i}\ot B)\\
& \simeq	& \underset{\longrightarrow}{\lim}\Ext^1(\Tr(A_i),B)
\end{eqnarray*} 
where the last isomorphism follows from the fact that
$A_{i} \ot B \simeq \Ext^{1}(\Tr(A_{i}),B)$ for any finitely presented module $A_i$,  as discussed in Proposition~\ref{P:4-term-tensor}.
\end{proof}

\begin{example}
In ~\cite{H}, I.~Herzog studies a torsion theory on the functor category 
$(\mod(\Lambda),\ab)$. 
%There he defines for any functor $F\in(\mod(\Lambda),\ab)$ its torsion subfunctor $t(F)$.  
In particular, his torsion subfunctor $t(\blank\otimes B)$ of $\blank\otimes B$ is described by 
\[
t(\blank\otimes B) \simeq \Ext^1(\Tr(\blank),B)
\]
As a result, $\blank\ot B \simeq t(\blank\otimes B)$ when $\blank\ot B$ is restricted to the category $\mod(\Lambda)$. Thus, $\blank\ot B$ is the colimit extension of Herzog's torsion functor $t(\blank\otimes B)$.
\end{example}

\end{document}